\let\bbordermatrix\bordermatrix
\patchcmd{\bbordermatrix}{8.75}{4.75}{}{}
\patchcmd{\bbordermatrix}{\left(}{\left[}{}{}
\patchcmd{\bbordermatrix}{\right)}{\right]}{}{}
\newcommand\blfootnote[1]{%
  \begingroup
  \renewcommand\thefootnote{}\footnote{#1}%
  \addtocounter{footnote}{-1}%
  \endgroup
}
\newtheorem{theorem}{Theorem}[section]
\newtheorem{lemma}[theorem]{Lemma}
\newtheorem{corollary}[theorem]{Corollary}
\theoremstyle{definition}
\title {\bf On energy ordering of vertex-disjoint bicyclic sidigraphs}
\author {Sumaira Hafeez\thanks{Corresponding author.} }
\author {Rashid Farooq}
\affil{School of Natural Sciences,
	National University of Sciences and Technology,\\
	H-12 Islamabad, Pakistan }
\date{}
\begin{document}
\maketitle
%%%%%%%%%%%%%%
\date{}
\blfootnote{\raggedright Email addresses:  sumaira.hafeez123@gmail.com (S. Hafeez), farook.ra@gmail.com (R. Farooq).}
\begin{abstract}
The energy and iota energy of signed digraphs are respectively defined by $E(S)=$\\
$\sum_{k=1}^n|{\rm Re}(\rho_k)|$ and $E_c(S)=\sum_{k=1}^n|{\rm Im }(\rho_k)|$, where $\rho_1, \dots,\rho_n$ are eigenvalues of $S$ and, ${\rm Re}(\rho_k)$ and ${\rm Im}(\rho_k)$ are respectively real and imaginary values of the eigenvalue $\rho_k$. Recently, Yang and Wang (2018) find the energy and iota energy ordering of digraphs in $\mathcal{D}_n$ and compute the maximal energy and iota energy, where  $\mathcal{D}_n$ denotes the set of vertex-disjoint bicyclic digraphs of a fixed order $n$. In this paper, we investigate the energy ordering of signed digraphs in $\mathcal{D}_n^s$ and finds the maximal energy, where  $\mathcal{D}_n^s$ denotes the set of vertex-disjoint bicyclic sidigraphs of a fixed order $n$. 
\end{abstract}
\begin{quote}
	{\bf Keywords:}
	Signed digraphs; Energy ordering; Maximal energy
\end{quote}

\begin{quote}
	{\bf AMS Classification:} 05C35, 05C50
\end{quote}
\section{Introduction}
If every arc of a digraph is assigned a weight $+1$ or $-1$ then it is called a signed digraph (henceforth, sidigraph). Each arc of a sidigraph is called a signed arc. We denote by $uw$, the arc from a vertex $u$ to a vertex $w$. The sign of the arc $uw$ is denoted by $\varphi(u, w)$. A directed signed path $P_n$ is a sidigraph on $n$ vertices $\{w_j\mid ~j=1,2,\dots,n\}$ with signed arcs $\{w_jw_{j+1}\mid~ j = 1, 2,\dots, n - 1\}$. A signed directed cycle $C_n$ of order $n\geq 2$ is a sidigraph with vertices $\{w_j\mid~ j=1,2,\dots,n\}$ and signed arcs $\{w_jw_{j+1}\mid~j=1,2,\dots,n-1\} \cup \{w_nw_1\}$. The product of sign of the arcs of a sidigraph $S$ is called the sign of $S$. A sidigraphs $S$ is said to be a strongly connected sidigraph if for every pair of vertices $v,w$, a path from $v$ to $w$ and a path from $w$ to $v$ exist.

A sidigraph with  equal number of vertices and arcs and contains only one directed cycle is said to be a unicyclic sidigraph. A sidigraph with connected underlying sigraph and has exactly two directed cycles is said to be bicyclic sidigraph. We denote a positive (respectively, negative) cycle of order $n$ by $C_n$ (respectively, ${\boldsymbol{C}}_n$). A cycle of order $n$ which is either positive or negative is denoted by $\mathcal{C}_n$. A positive cycle is a cycle with positive sign and a negative cycle is a cycle with negative sign. We denote by $\mathcal{D}_n^s$, the class of vertex-disjoint bicyclic sidigraphs of a fixed order $n$. 

Let $S$ be an $n$-vertex sidigraphs. Then the adjacency matrix $A(S)= [a_{ij}]_{n\times n }$ of $S$ is given by:
\[
a_{ij} =
\left\{
\begin{array}{ll}
\varphi(w_i, w_j) &\mbox{if there is an arc from $w_i$ to $w_j$,}\\
0 &\mbox{otherwise.}
\end{array}
\right.
\] 
The eigenvalues of $A(S)$ are said to be the eigevalues of $S$.

 Pe$\tilde{{\rm n}}$a and Rada \cite{IPJ} put forward the idea of digraph energy. Let $\rho_1, \dots,\rho_n$ are the eigenvalues of a sidigraph $S$. Pirzada and Bhat \cite{PB2014} defined the energy of a sidigraph $S$ as $E(S)=\sum_{k=1}^n|{\rm Re}(\rho_k)|$, where ${\rm Re}(\rho_k)$ denotes the real value of the eigenvalue $\rho_k$. Khan et al. \cite{KFR2016} and Farooq et al. \cite{FKC} put forward the idea of iota energy of digraph (sidigraph) and defined iota energy as $E_c(S)=\sum_{k=1}^n|{\rm Im}(\rho_k)|$, where ${\rm Im}(\rho_k)$ denotes the imaginary value of the eigenvalue $\rho_k$.  Khan et al. \cite{KF, KFAA} finds the extremal energy of digraphs and sidigraphs among all vertex-disjoint bicyclic digraphs and sidigraphs of order $n$. Farooq et al. \cite{FMA2017, FCK} finds the extremal iota energy of digraphs and sidigraphs among all vertex-disjoint bicyclic digraphs and sidigraphs of order $n$. In 2016, Monslave and Rada \cite{MJR} investigate the general class of bicyclic digraphs and finds extremal energy. Hafeez et al. \cite{SFK} considered the class of all bicyclic sidigraphs and finds extremal energy.

Recently, Yang and Wang \cite{XL2019} determined the energy and iota ordering of digraphs in $\mathcal{D}_n$ and finds the extremal energy and iota energy, where $\mathcal{D}_n$ is the class of vertex-disjoint bicyclic digraphs of order $n$. Motivated by Yang and Wang \cite{XL2019}, we consider the problem of finding the ordering of sidigraphs in $\mathcal{D}_n^s$ with respect to energy and also investigate extremal energy of sidigraphs in this class, where $\mathcal{D}_n^s$ is the class of vertex-disjoint bicyclic sidigraphs of order $n$. 
\section{Some results and notations}
 Let $p,q\geq 2$ and $D_n^s[p,q]$ be the disjoint union of directed cycles $C_p$ and $C_q$ and $D_n^s[{\boldsymbol{p}},{\boldsymbol{q}}]$ be the disjoint union of directed cycles ${\boldsymbol{C}_p}$ and ${\boldsymbol{C}_q}$. Also suppose $D_n^s[{\boldsymbol{p}},q]$ denotes the disjoint union of directed cycles ${\boldsymbol{C}_p}$ and $C_q$ and $D_n^s[p,{\boldsymbol{q}}]$ denotes the disjoint union of directed cycles $C_p$ and ${\boldsymbol{C}_q}$ and $D_n^s[\mathfrak{p},\mathfrak{q}]$ denotes the disjoint union of directed cycles $\mathcal{C}_p$ and $\mathcal{C}_q$. Let $\mathcal{D}_n^s[p,q] = \left\{D_n^s[p,q],\:D_n^s[{\boldsymbol{p}},{\boldsymbol{q}}],\:D_n^s[{\boldsymbol{p}},q],\: D_n^s[p,{\boldsymbol{q}}]\right\}$.

Let $S$ be a sidigraph with eigenvalues $\rho_1, \dots,\rho_n$. Then energy of $S$ is defined as $E(S)=\sum_{k=1}^n|{\rm Re}(\rho_k)|$,
where ${\rm Re}(\rho_k)$ represents the real value of $\rho_k$.

Relationship between energy of strong components of a sidigraph $S$ and and energy of $S$ is given in the following result.
\begin{lemma}[Pirzada and Bhat \cite{PB2014}]\label{strong}
Let $Q_1,\dots,Q_k$ are strong components of a sidigraph $S$. Then $E(S) = \sum\limits_{j=1}^{k} E(Q_j)$.
\end{lemma}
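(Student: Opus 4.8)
The plan is to reduce the statement to the well-known block-triangular structure of the adjacency matrix induced by the decomposition of $S$ into strong components, and then to invoke the fact that the spectrum of a block-triangular matrix is the union, counted with multiplicity, of the spectra of its diagonal blocks.

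First I would recall that the strong components $Q_1,\dots,Q_k$ partition the vertex set of $S$, and that the \emph{condensation} of $S$ (the digraph obtained by contracting each $Q_j$ to a single vertex) is acyclic. Being acyclic, its vertices admit a topological ordering; relabeling the strong components accordingly, I may assume that every arc of $S$ joining two distinct components $Q_i$ and $Q_j$ runs only from the lower-indexed component to the higher-indexed one. Indeed, if arcs ran in both directions between two components, then those two components would be mutually reachable and hence merge into a single strong component, contradicting maximality.

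Next I would group the vertices of $S$ block by block according to this ordering. With respect to the resulting labeling, the adjacency matrix $A(S)$ takes block upper-triangular form whose $(j,j)$ diagonal block is exactly $A(Q_j)$ and whose below-diagonal blocks vanish. Since reordering the vertices amounts to conjugating $A(S)$ by a permutation matrix, the spectrum is unchanged. I would then use that the characteristic polynomial of a block-triangular matrix is the product of the characteristic polynomials of its diagonal blocks, so that the eigenvalue multiset of $S$ is the disjoint union of the eigenvalue multisets of $Q_1,\dots,Q_k$.

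Finally, summing $|{\rm Re}(\cdot)|$ over this disjoint union and regrouping block by block yields
\[
E(S)=\sum_{\rho\in{\rm spec}(S)}|{\rm Re}(\rho)|=\sum_{j=1}^{k}\sum_{\rho\in{\rm spec}(Q_j)}|{\rm Re}(\rho)|=\sum_{j=1}^{k}E(Q_j),
\]
which is the desired identity. The only genuinely substantive point is establishing the block-triangular form, that is, the acyclicity of the condensation; once that step is in place the remainder is routine linear algebra, so I expect no serious obstacle beyond stating that reduction carefully.
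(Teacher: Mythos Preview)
The paper does not supply its own proof of this lemma; it is quoted from Pirzada and Bhat \cite{PB2014} as a known result. Your argument is correct and is precisely the standard one: topologically order the strong components (possible because the condensation of any digraph is acyclic), obtain a block upper-triangular adjacency matrix with diagonal blocks $A(Q_j)$, deduce that the spectrum of $S$ is the multiset union of the spectra of the $Q_j$, and then sum $|\mathrm{Re}(\cdot)|$ blockwise. There is nothing to correct or add.
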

Pirzada and Bhat \cite{PB2014} gave the following energy formulae for positive and negative directed cycles of order $n\geq2$.
\begin{equation}\label{posi}
E(C_n)=
\left\{
\begin{array}{ll}
2 \cot \frac{\pi}{n} & \mbox{if $n\equiv 0 (\bmod 4)$}\vspace{.2cm}\\
2 \csc \frac{\pi}{n} & \mbox{if $n\equiv 2 (\bmod 4)$}\vspace{.2cm}\\
\csc \frac{\pi}{2n} & \mbox{if $n\equiv 1 (\bmod 2)$,}
\end{array}
\right.
\end{equation}\vspace{.18cm}
\begin{equation}\label{nega}
E({\boldsymbol{C}}_n)=
\left\{
\begin{array}{ll}
2 \csc \frac{\pi}{n} & \mbox{if $n\equiv 0 (\bmod 4)$}\vspace{.2cm}\\
2 \cot \frac{\pi}{n} & \mbox{if $n\equiv 2 (\bmod 4)$}\vspace{.2cm}\\
\csc \frac{\pi}{2n} & \mbox{if $n\equiv 1 (\bmod 2)$.}
\end{array}
\right.
\end{equation}
For any $S\in \mathcal{D}_n^s$, its strong components are: a sidigraph from the set $\mathcal{D}_n^s[p,q]$ and few isolated vertices. Therefore using Lemma \ref{strong}, we can only use the energy of strong components to find the energy ordering in $\mathcal{D}_n^s$.

Using Lemma \ref{strong}, we give the following equations.
\begin{eqnarray*}
E(D_n^s[p,q])&=& E(C_p) +E(C_q),\\
E(D_n^s[{\boldsymbol{p}},{\boldsymbol{q}}])&=& E({\boldsymbol{C}}_p) +E({\boldsymbol{C}}_q),\\
E(D_n^s[{\boldsymbol{p}},q])&=& E({\boldsymbol{C}}_p) +E(C_q),\\
E(D_n^s[p,{\boldsymbol{q}}])&=& E(C_p) +E({\boldsymbol{C}}_q).	
\end{eqnarray*}

Let $n >4$. In Lemmas $2.2$$\sim$$2.8$, we give some results about the monotonicity of some functions which will be used to find the energy ordering of sidigraphs in $\mathcal{D}_n^s$.
\begin{lemma}[Farooq et al. \cite{FMA2017}]\label{lem1}
Suppose $f(z)= 2(\cot\frac{\pi}{z} + \cot\frac{\pi}{n-z})$. For $z \in [2, \frac{n}{2}]$, $f(z)$ is increasing and for $z\in [\frac{n}{2}, n-2]$, $f(z)$ is decreasing.
\end{lemma}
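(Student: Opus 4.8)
The plan is to show that the derivative $f'(z)$ is positive on the interior of $[2,\tfrac{n}{2}]$ and negative on the interior of $[\tfrac{n}{2},n-2]$, the sign change occurring exactly at the midpoint. First I would differentiate termwise. Using $\frac{d}{dz}\cot\frac{\pi}{z} = \frac{\pi}{z^2}\csc^2\frac{\pi}{z}$ and $\frac{d}{dz}\cot\frac{\pi}{n-z} = -\frac{\pi}{(n-z)^2}\csc^2\frac{\pi}{n-z}$, one obtains
\[
f'(z) = 2\pi\bigl[g(z) - g(n-z)\bigr], \qquad \text{where } g(t) := \frac{1}{t^2}\csc^2\frac{\pi}{t}.
\]
Since the two arguments $z$ and $n-z$ are reflections of one another about $n/2$, the sign of $f'(z)$ is controlled entirely by the monotonicity of the single function $g$ on $[2,n-2]$, which is well defined there because $\pi/t\in(0,\tfrac{\pi}{2}]$ keeps $\sin\frac{\pi}{t}>0$.

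The key step is therefore to prove that $g$ is strictly decreasing on $[2,\infty)$. I would do this via the substitution $u=\pi/t$, which maps $t\in[2,\infty)$ to $u\in(0,\tfrac{\pi}{2}]$ and is itself decreasing in $t$. Under this substitution $g$ becomes $\frac{1}{\pi^2}\left(\frac{u}{\sin u}\right)^2$, so it suffices to check that $\frac{u}{\sin u}$ is increasing on $(0,\tfrac{\pi}{2}]$. This reduces to the elementary fact that $\sin u - u\cos u>0$ on $(0,\pi)$, which holds because the quantity vanishes at $u=0$ and has derivative $u\sin u>0$. Consequently $\frac{u}{\sin u}$, and hence $\left(\frac{u}{\sin u}\right)^2$, is increasing in $u$; since $u=\pi/t$ decreases in $t$, the composite $g(t)$ is strictly decreasing in $t$.

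With the monotonicity of $g$ established, the conclusion follows by the reflection argument. For $z\in[2,\tfrac{n}{2}]$ we have $z\le n-z$, so $g(z)\ge g(n-z)$ and thus $f'(z)\ge 0$, with strict inequality whenever $z<n/2$; hence $f$ is increasing on that interval. Symmetrically, for $z\in[\tfrac{n}{2},n-2]$ we have $z\ge n-z$, giving $f'(z)\le 0$ and so $f$ decreasing. I expect the main obstacle to be the monotonicity of $g$: once the substitution $u=\pi/t$ recasts it as $\left(\tfrac{u}{\sin u}\right)^2$, the remaining work is the standard sign analysis of $\sin u - u\cos u$ together with the symmetry bookkeeping, both of which are routine.
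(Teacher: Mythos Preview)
Your argument is correct. The paper does not actually supply a proof of this lemma: it is quoted from Farooq et al.\ \cite{FMA2017} and stated without proof, so there is no in-paper proof to compare against line by line.

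That said, your approach is exactly in the spirit of the surrounding toolkit. Writing $f'(z)=2\pi\bigl[g(z)-g(n-z)\bigr]$ with $g(t)=\frac{1}{t^2}\csc^2\frac{\pi}{t}=\bigl(t\sin\frac{\pi}{t}\bigr)^{-2}$ reduces the problem to the monotonicity of $t\sin\frac{\pi}{t}$, which is precisely Lemma~\ref{lem4} (also taken from \cite{FMA2017}). Your substitution $u=\pi/t$ and the sign analysis of $\sin u-u\cos u$ amount to a self-contained proof of that auxiliary fact, so your write-up is, if anything, slightly more complete than what the paper assumes. The reflection/symmetry step about $z\leftrightarrow n-z$ is the same device used in the paper's own proofs of Lemmas~\ref{lem6} and \ref{lemn2}.
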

\begin{lemma}[Yang and Wang \cite{XL2019}]\label{lem2}
Let $f(z) = 2(\csc\frac{\pi}{z} +\cot\frac{\pi}{n-z})$. For $z\in [2,n-2]$, $f(z)$ is decreasing.
\end{lemma}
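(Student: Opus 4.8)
The plan is to fix $n$ and prove that $f'(z)<0$ throughout $[2,n-2]$, where $f$ is smooth since both $\pi/z$ and $\pi/(n-z)$ stay in $(0,\pi/2]$ and the sines never vanish. Differentiating term by term gives
\[
f'(z)=\frac{2\pi}{z^{2}}\csc\frac{\pi}{z}\cot\frac{\pi}{z}-\frac{2\pi}{(n-z)^{2}}\csc^{2}\frac{\pi}{n-z}.
\]
On this range every factor is nonnegative, so the first term (coming from the increasing summand $2\csc(\pi/z)$) is $\ge 0$ and only the second is negative. Writing $\csc\cot=\cos/\sin^{2}$ and $\csc^{2}=1/\sin^{2}$ and clearing the positive common factor $z^{2}(n-z)^{2}\sin^{2}\frac{\pi}{z}\sin^{2}\frac{\pi}{n-z}/(2\pi)$, the inequality $f'(z)<0$ becomes equivalent to
\[
(n-z)^{2}\sin^{2}\frac{\pi}{n-z}\,\cos\frac{\pi}{z}<z^{2}\sin^{2}\frac{\pi}{z}.
\]

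The conceptual obstacle is precisely that the positive term does not disappear: $2\csc(\pi/z)$ is genuinely increasing, so the content of the lemma is that the fall of $2\cot(\pi/(n-z))$ always dominates. Crude global estimates (for instance $t\sin(\pi/t)\in[2,\pi)$) are too lossy to detect this, because when $z$ is large both sides of the displayed inequality are within $O(1/z^{2})$ of $\pi^{2}$ and the decisive margin lives entirely in the second-order terms. I would therefore squeeze the two sides with matching sharp bounds. For the left side, $\sin\frac{\pi}{n-z}\le\frac{\pi}{n-z}$ together with $\cos\frac{\pi}{z}\ge 0$ gives $(n-z)^{2}\sin^{2}\frac{\pi}{n-z}\cos\frac{\pi}{z}\le\pi^{2}\cos\frac{\pi}{z}$, and then the elementary bound $\cos y\le 1-\frac{y^{2}}{2}+\frac{y^{4}}{24}$ (valid for $y\ge 0$, proved by differentiating three times down to $y-\sin y\ge 0$) applied with $y=\pi/z$ yields $\pi^{2}\cos\frac{\pi}{z}\le\pi^{2}\bigl(1-\frac{\pi^{2}}{2z^{2}}+\frac{\pi^{4}}{24z^{4}}\bigr)$.

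For the right side I would use $\sin y\ge y-\frac{y^{3}}{6}$ and $(1-u)^{2}\ge 1-2u$ to obtain the clean lower bound $z^{2}\sin^{2}\frac{\pi}{z}\ge\pi^{2}\bigl(1-\frac{\pi^{2}}{3z^{2}}\bigr)$. Comparing the two middle quantities, the required inequality $\pi^{2}\bigl(1-\frac{\pi^{2}}{2z^{2}}+\frac{\pi^{4}}{24z^{4}}\bigr)\le\pi^{2}\bigl(1-\frac{\pi^{2}}{3z^{2}}\bigr)$ collapses, after cancelling $\pi^{2}$ and clearing denominators, to $\pi^{2}\le 4z^{2}$, that is $z\ge\pi/2$, which is automatic since $z\ge 2$. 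Chaining the four inequalities establishes the equivalent estimate, and it is strict: the first step is strict for $z>2$ because $\sin\frac{\pi}{n-z}<\frac{\pi}{n-z}$ and $\cos\frac{\pi}{z}>0$, while at $z=2$ the left side is simply $0<4$. Hence $f'(z)<0$ on all of $[2,n-2]$ and $f$ is strictly decreasing. A pleasant feature of this route is that the decisive step reduces to the trivial $z\ge\pi/2$, so no case split on the sign of $n-2z$ is needed; the only point demanding care is choosing Taylor bounds sharp enough that the cosine's quadratic coefficient $\tfrac12$ beats the $\tfrac13$ produced by $z^{2}\sin^{2}(\pi/z)$.
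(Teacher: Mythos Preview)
Your argument is correct. The derivative is computed properly, the reduction to
$(n-z)^{2}\sin^{2}\frac{\pi}{n-z}\,\cos\frac{\pi}{z}<z^{2}\sin^{2}\frac{\pi}{z}$
is valid, and the chain of Taylor-type bounds ($\sin y\le y$, $\cos y\le 1-y^{2}/2+y^{4}/24$, $\sin y\ge y-y^{3}/6$, $(1-u)^{2}\ge 1-2u$) closes exactly as you say, the comparison collapsing to $\pi^{2}\le 4z^{2}$. The strictness discussion at $z=2$ and $z>2$ is also fine.

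The paper does not prove this lemma; it is quoted from Yang and Wang. However, the paper \emph{does} prove the companion statement (Lemma~\ref{lem6}, that $2(\cot\frac{\pi}{z}+\csc\frac{\pi}{n-z})$ is increasing), and that proof is the mirror image of what one expects the cited proof of the present lemma to be: one splits at $z=n/2$ and, on each half, invokes the auxiliary monotonicity results that $t\mapsto t\sin(\pi/t)$ and $t\mapsto \frac{\pi}{t^{2}}\cos\frac{\pi}{t}\csc^{2}\frac{\pi}{t}$ are increasing (Lemmas~\ref{lem4} and~\ref{lem5}). Transcribed to the present $f$, that route shows $\frac{\pi}{z^{2}}\cos\frac{\pi}{z}\csc^{2}\frac{\pi}{z}\le\frac{\pi}{(n-z)^{2}}\cos\frac{\pi}{n-z}\csc^{2}\frac{\pi}{n-z}<\frac{\pi}{(n-z)^{2}}\csc^{2}\frac{\pi}{n-z}$ when $z\le n-z$, and uses $t\sin(\pi/t)$ monotone to handle $z\ge n-z$. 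Your approach is genuinely different: it avoids both the case split and the two auxiliary monotonicity lemmas, trading them for explicit second-order Taylor envelopes. The payoff is a self-contained argument whose only analytic input is the sign of $y-\sin y$; the cost is that one must choose the envelopes carefully enough that the quadratic coefficients $\tfrac12$ and $\tfrac13$ line up, as you observed.
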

\begin{lemma}[Yang and Wang \cite{XL2019}]\label{lem3}
Suppose $f(z) = 2(\csc\frac{\pi}{z} + \csc\frac{\pi}{n-z})$. For $z\in [2,\frac{n}{2}]$, $f(z)$ is decreasing.
\end{lemma}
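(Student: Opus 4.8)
The plan is to treat $f$ as a differentiable function of the real variable $z$ and to show that $f'(z)\le 0$ on $[2,\tfrac{n}{2}]$, with strict inequality on the interior, so that $f$ is decreasing there. First I would differentiate, using the chain rule on each summand:
\[
\frac{d}{dz}\csc\frac{\pi}{z}=\frac{\pi}{z^2}\csc\frac{\pi}{z}\cot\frac{\pi}{z},\qquad
\frac{d}{dz}\csc\frac{\pi}{n-z}=-\frac{\pi}{(n-z)^2}\csc\frac{\pi}{n-z}\cot\frac{\pi}{n-z}.
\]
Hence $f'(z)=2\pi\bigl(\phi(z)-\phi(n-z)\bigr)$, where I set $\phi(t)=\dfrac{1}{t^2}\csc\frac{\pi}{t}\cot\frac{\pi}{t}$. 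The derivative is thus expressed purely through the single auxiliary function $\phi$ evaluated at the two symmetric points $z$ and $n-z$.

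Because $z\in[2,\tfrac{n}{2}]$ forces $z\le n-z$ (both arguments lying in $[2,n-2]$), it suffices to prove that $\phi$ is \emph{increasing} on $[2,n-2]$: then $\phi(z)\le\phi(n-z)$, so $f'(z)\le 0$, with strict inequality whenever $z<\tfrac{n}{2}$. To analyze $\phi$ I would make the substitution $x=\pi/t$, a decreasing change of variable carrying $t\in[2,\infty)$ onto $x\in(0,\tfrac{\pi}{2}]$, under which $\phi=\tfrac{1}{\pi^2}\psi(x)$ with $\psi(x)=\dfrac{x^2\cos x}{\sin^2 x}$. Monotonicity of $\phi$ increasing in $t$ is therefore equivalent to $\psi$ being \emph{decreasing} in $x$ on $(0,\tfrac{\pi}{2}]$.

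It then remains to show $\psi'(x)<0$ on $(0,\tfrac{\pi}{2})$. A direct computation reduces the sign of $\psi'(x)$ to that of $N(x)=\sin 2x-x\bigl(1+\cos^2 x\bigr)$, so the heart of the matter is the inequality $N(x)<0$ on $(0,\tfrac{\pi}{2})$. I would establish this by a nested ``value-at-zero-plus-sign-of-derivative'' argument: one checks $N(0)=0$, and that the sign of $N'(x)$ is governed by $M(x)=2x\cos x-3\sin x$; since $M(0)=0$ and $M'(x)=-\cos x-2x\sin x<0$ on $(0,\tfrac{\pi}{2})$, we get $M<0$, hence $N'<0$, hence $N<0$. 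The only genuine obstacle is this auxiliary inequality for $N$—all the steps before it are routine chain-rule bookkeeping—and the double differentiation down to the manifestly negative expression $-\cos x-2x\sin x$ is precisely what makes the argument close.
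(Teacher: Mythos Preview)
Your argument is correct. The derivative computation, the reduction to the monotonicity of $\phi(t)=\dfrac{1}{t^2}\csc\dfrac{\pi}{t}\cot\dfrac{\pi}{t}=\dfrac{\cos(\pi/t)}{t^2\sin^2(\pi/t)}$, and the chain $M'<0\Rightarrow M<0\Rightarrow N'<0\Rightarrow N<0\Rightarrow\psi'<0$ all check out.

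As for comparison: the paper does not give its own proof of this lemma (it is quoted from Yang and Wang), but its surrounding machinery reveals the intended route. Your auxiliary function $\phi$ is, up to the constant factor $\pi$, exactly the function in Lemma~\ref{lem5} (and the function $g$ of Lemma~\ref{lemn1}), whose monotonicity the paper also imports from \cite{XL2019} and then uses as a black box in the proofs of Lemmas~\ref{lem6} and~\ref{lemn2}. So the overall shape---differentiate, reduce to comparing $\phi(z)$ with $\phi(n-z)$, invoke monotonicity of $\phi$---is the same as the paper's pattern; the difference is that you supply a self-contained elementary proof of that monotonicity via the substitution $x=\pi/t$ and the nested sign analysis of $N$ and $M$, whereas the paper treats it as a quoted lemma. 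Your version is thus more self-contained, at the cost of a short extra computation; the paper's version is shorter but relies on the external reference for the key step.
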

\begin{lemma}[Farooq et al. \cite{FMA2017}]\label{lem4}
Let $f(z) = z\sin\frac{\pi}{z}$. For $z\in [2,\infty)$, $f(z)$ is increasing.
\end{lemma}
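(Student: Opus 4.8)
The plan is to prove monotonicity directly by showing that the derivative of $f$ is positive throughout $[2,\infty)$. First I would differentiate, using the product and chain rules, to obtain
\[
f'(z) = \sin\frac{\pi}{z} - \frac{\pi}{z}\cos\frac{\pi}{z}.
\]
The problem then reduces entirely to establishing that this expression stays positive for every $z \ge 2$.

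To handle the sign cleanly, I would change variables by setting $u = \pi/z$. As $z$ ranges over $[2,\infty)$, the new variable $u$ decreases over $(0,\pi/2]$, taking the value $u = \pi/2$ at $z=2$ and tending to $0^{+}$ as $z \to \infty$. In this variable the derivative takes the form $f'(z) = \sin u - u\cos u =: g(u)$, so it suffices to show that $g(u) > 0$ on $(0,\pi/2]$.

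Establishing the positivity of $g$ is the core of the argument, though it is routine. I would note that $g(0) = 0$ and compute $g'(u) = \cos u - (\cos u - u\sin u) = u\sin u$, which is strictly positive on $(0,\pi/2]$. Hence $g$ is strictly increasing there, and since it starts from $g(0)=0$ it satisfies $g(u) > 0$ for all $u \in (0,\pi/2]$. Equivalently, one may simply invoke the classical inequality $\tan u > u$ valid on $(0,\pi/2)$, which after multiplying through by $\cos u > 0$ yields the same conclusion.

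Translating back, $f'(z) = g(\pi/z) > 0$ for every $z \in [2,\infty)$ (with $f'(2) = g(\pi/2) = 1 > 0$ at the left endpoint), so $f$ is strictly increasing on the interval, completing the proof. I do not anticipate any genuine obstacle in this argument; the only point demanding care is confirming that the admissible range of $u$ remains inside $(0,\pi/2]$, so that $\cos u > 0$ and the reduction to $g$, as well as the appeal to $\tan u > u$, are both legitimate.
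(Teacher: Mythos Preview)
Your argument is correct: the derivative computation $f'(z)=\sin(\pi/z)-(\pi/z)\cos(\pi/z)$ is right, and the substitution $u=\pi/z\in(0,\pi/2]$ together with $g'(u)=u\sin u>0$, $g(0)=0$ cleanly gives $f'(z)>0$ on $[2,\infty)$.

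There is nothing to compare against in this paper: the lemma is quoted from Farooq et al.~\cite{FMA2017} and no proof is reproduced here. Your direct differentiation argument is the standard one and is almost certainly what appears in the cited source; in any case it is complete and self-contained.
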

\begin{lemma}[Yang and Wang \cite{XL2019}]\label{lem5}
Suppose $f(z)= \frac{\pi}{z^2}\cos\frac{\pi}{z}\csc^2\frac{\pi}{z}$. For $z\in [2,n-2]$, $f(z)$ is increasing.
\end{lemma}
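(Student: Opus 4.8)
The plan is to clear the awkward $1/z^{2}$ prefactor by the substitution $t=\pi/z$, which converts the stated monotonicity into a single clean trigonometric inequality. Writing $z=\pi/t$ gives $1/z^{2}=t^{2}/\pi^{2}$, so
\[
f(z)=\frac{\pi}{z^{2}}\cos\frac{\pi}{z}\csc^{2}\frac{\pi}{z}=\frac{1}{\pi}\,\frac{t^{2}\cos t}{\sin^{2}t}=:\frac{1}{\pi}\,g(t).
\]
As $z$ ranges over $[2,n-2]$, the variable $t=\pi/z$ ranges over $[\pi/(n-2),\pi/2]\subseteq(0,\pi/2]$ and is strictly \emph{decreasing} in $z$. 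Hence $f$ is increasing in $z$ on $[2,n-2]$ if and only if $g$ is decreasing in $t$ on that image interval, and it therefore suffices to prove that $g$ is decreasing on all of $(0,\pi/2)$ (continuity then extends this to the right endpoint $t=\pi/2$, i.e.\ $z=2$).

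Since $g(t)>0$ on $(0,\pi/2)$, I would establish $g'<0$ through logarithmic differentiation. From $\ln g=2\ln t+\ln\cos t-2\ln\sin t$ one obtains
\[
\frac{g'(t)}{g(t)}=\frac{2}{t}-\tan t-2\cot t,
\]
so $g'<0$ is equivalent to $\tfrac{2}{t}<\tan t+2\cot t$. Placing the two tangent/cotangent terms over a common denominator, $\tan t+2\cot t=(\sin^{2}t+2\cos^{2}t)/(\sin t\cos t)=(1+\cos^{2}t)/(\sin t\cos t)$, and multiplying through by the positive quantity $t\sin t\cos t$ reduces the entire claim to the single inequality
\[
\sin 2t<t\,(1+\cos^{2}t),\qquad t\in(0,\pi/2).
\]

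To finish I would set $\psi(t)=t(1+\cos^{2}t)-\sin 2t$ and show $\psi>0$. One has $\psi(0)=0$, and a short computation using $\cos 2t=2\cos^{2}t-1$ collapses the derivative to the factored form
\[
\psi'(t)=3\sin^{2}t-t\sin 2t=\sin t\,\bigl(3\sin t-2t\cos t\bigr).
\]
The crux is then the positivity of the bracket $\phi(t)=3\sin t-2t\cos t$: it vanishes at $t=0$ and has derivative $\phi'(t)=\cos t+2t\sin t>0$ on $(0,\pi/2)$, so $\phi>0$ there; consequently $\psi'>0$, and with $\psi(0)=0$ this yields $\psi>0$ on $(0,\pi/2)$, completing the argument. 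The only genuine obstacle is spotting the clean factorization of $\psi'$: without it one is left with a mixed expression in $t$, $\sin 2t$, and $\cos 2t$ whose sign is not transparent, but once $\psi'$ is written as $\sin t\,(3\sin t-2t\cos t)$ the remainder is a routine nested monotonicity argument anchored at $t=0$.
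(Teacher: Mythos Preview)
Your argument is correct. The substitution $t=\pi/z$ is exactly the right move, and the chain of reductions---logarithmic derivative, common denominator, then $\psi(t)=t(1+\cos^{2}t)-\sin 2t$ with the factorization $\psi'(t)=\sin t\,(3\sin t-2t\cos t)$---is clean and checks out line by line. The endpoint $z=2$ (where $\cos(\pi/z)=0$ and the logarithmic differentiation is unavailable) is handled correctly by continuity.

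As to comparison: the present paper does not actually prove this lemma. It is quoted verbatim from Yang and Wang~\cite{XL2019} and used as a black box (indeed the paper remarks that the proof of the next lemma, Lemma~\ref{lemn1}, ``is similar to the proof of Lemma~\ref{lem5} and is thus omitted''). So there is no in-paper argument to set yours against; you have supplied a self-contained elementary proof where the paper relies on a citation. Your approach has the merit of giving strict monotonicity on all of $[2,\infty)$, which in particular covers the variant domain in Lemma~\ref{lemn1} as well.
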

The proof of next lemma is similar to the proof of Lemma \ref{lem5} and is thus omitted.
\begin{lemma}\label{lemn1}
Suppose $f(z)= \frac{\pi}{z^2}\cos\frac{\pi}{2z}\csc^2\frac{\pi}{2z}$ and $g(z)=\frac{\pi}{z^2}\cos\frac{\pi}{z}\csc^2\frac{\pi}{z}$. For $z\in [2,\infty)$, $f(z)$ and $g(z)$ are increasing.
\end{lemma}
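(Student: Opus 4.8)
The plan is to collapse both claims into a single monotonicity statement for one auxiliary function, exactly in the spirit of Lemma \ref{lem5}. Setting $h(t) = t^2\cos t\,\csc^2 t = t^2\cos t/\sin^2 t$, a direct substitution gives $g(z) = \tfrac{1}{\pi}\,h\!\left(\tfrac{\pi}{z}\right)$ and $f(z) = \tfrac{4}{\pi}\,h\!\left(\tfrac{\pi}{2z}\right)$. Since $z\mapsto \pi/z$ and $z\mapsto \pi/(2z)$ are positive and strictly decreasing on $[2,\infty)$, carrying that interval onto $(0,\tfrac{\pi}{2}]$ and $(0,\tfrac{\pi}{4}]$ respectively, the functions $f$ and $g$ will both be increasing in $z$ (a decreasing function composed with a decreasing inner map) as soon as $h$ is shown to be strictly decreasing on $(0,\tfrac{\pi}{2}]$; note that $(0,\tfrac{\pi}{4}]\subset(0,\tfrac{\pi}{2}]$, so one interval suffices. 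Thus the entire lemma reduces to a single derivative computation.

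To analyze $h$, I would differentiate and clear the positive factor $\sin^4 t$ from the denominator. A short calculation, using $\sin^2 t + 2\cos^2 t = 1 + \cos^2 t$, yields
\[
h'(t) = \frac{t\,\bigl(\sin 2t - t(1+\cos^2 t)\bigr)}{\sin^3 t},
\]
so on $(0,\tfrac{\pi}{2})$ the sign of $h'(t)$ coincides with the sign of $\phi(t) := \sin 2t - t(1+\cos^2 t)$. It therefore remains to prove that $\phi(t) < 0$ on $(0,\tfrac{\pi}{2}]$.

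This last inequality is the only genuine obstacle, and I would settle it by peeling off two derivatives, each anchored at the origin. First $\phi(0)=0$, and the double-angle identity $2\cos 2t - 1 - \cos^2 t = -3\sin^2 t$ gives $\phi'(t) = \sin t\,(2t\cos t - 3\sin t)$; since $\sin t > 0$ on $(0,\tfrac{\pi}{2})$, the sign of $\phi'$ is that of $\psi(t) := 2t\cos t - 3\sin t$. In turn $\psi(0)=0$ and $\psi'(t) = -\cos t - 2t\sin t < 0$ throughout $(0,\tfrac{\pi}{2})$, so $\psi$ is strictly negative there; hence $\phi$ is strictly decreasing from $\phi(0)=0$ and thus strictly negative on $(0,\tfrac{\pi}{2}]$. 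Consequently $h$ is strictly decreasing on $(0,\tfrac{\pi}{2}]$, and the reduction above shows that both $f$ and $g$ are increasing on $[2,\infty)$. The main delicacy is simply organizing this nested sign chase so that each auxiliary function is pinned down by its value at $t=0$ together with a sign-definite derivative.
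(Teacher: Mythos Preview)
Your argument is correct. The substitution $h(t)=t^2\cos t\,\csc^2 t$ with $g(z)=\tfrac{1}{\pi}h(\pi/z)$ and $f(z)=\tfrac{4}{\pi}h(\pi/(2z))$ checks out, the derivative formula $h'(t)=t\bigl(\sin 2t - t(1+\cos^2 t)\bigr)/\sin^3 t$ is right, and the two-step sign chase via $\phi$ and $\psi$ is clean and valid on $(0,\tfrac{\pi}{2}]$.

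As for comparison: the paper does not actually prove this lemma. It simply states that the proof is similar to that of Lemma~\ref{lem5} (due to Yang and Wang \cite{XL2019}) and omits it. Your write-up therefore supplies what the paper leaves out. The device of introducing a single auxiliary $h$ and recovering both $f$ and $g$ by composition with $t=\pi/z$ and $t=\pi/(2z)$ is a genuine economy over treating the two functions separately, since one monotonicity computation on $(0,\tfrac{\pi}{2}]$ handles both cases at once; this is a slightly more streamlined packaging than what a direct imitation of the Lemma~\ref{lem5} argument (one variable at a time) would give.
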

Now we prove the following results.
\begin{lemma}\label{lem6}
Suppose $f(z) = 2(\cot\frac{\pi}{z}+ \csc\frac{\pi}{n-z})$. For $z\in [2,n-2]$, $f(z)$ is increasing.
\end{lemma}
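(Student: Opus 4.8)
The plan is to show that $f'(z)\ge 0$ throughout $[2,n-2]$, so that $f$ is increasing. Differentiating term by term and using $\frac{d}{dz}\cot\frac{\pi}{z}=\frac{\pi}{z^2}\csc^2\frac{\pi}{z}$ together with $\frac{d}{dz}\csc\frac{\pi}{n-z}=-\frac{\pi}{(n-z)^2}\csc\frac{\pi}{n-z}\cot\frac{\pi}{n-z}$, one obtains
\[
f'(z)=\frac{2\pi}{z^2}\csc^2\frac{\pi}{z}-\frac{2\pi}{(n-z)^2}\csc\frac{\pi}{n-z}\cot\frac{\pi}{n-z}.
\]
It is convenient to abbreviate $\Phi(z)=\frac{\pi}{z^2}\csc^2\frac{\pi}{z}$ and $\psi(z)=\frac{\pi}{z^2}\cos\frac{\pi}{z}\csc^2\frac{\pi}{z}$, so that $f'(z)=2\bigl(\Phi(z)-\psi(n-z)\bigr)$. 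Hence the whole statement reduces to the single inequality $\Phi(z)\ge\psi(n-z)$ on $[2,n-2]$.

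Two structural facts drive the argument. First, since $\cos\frac{\pi}{z}\le 1$ for $z\in[2,\infty)$ we have $\psi(z)=\Phi(z)\cos\frac{\pi}{z}\le\Phi(z)$. Second, writing $\Phi(z)=\pi\bigl(z\sin\frac{\pi}{z}\bigr)^{-2}$ and invoking Lemma \ref{lem4}, which gives that $z\sin\frac{\pi}{z}$ is positive and increasing, we conclude that $\Phi$ is \emph{decreasing} on $[2,\infty)$; meanwhile $\psi$ is exactly the function of Lemma \ref{lem5}, hence \emph{increasing} on $[2,n-2]$.

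With these two monotonicities in hand I would split the interval at its midpoint. For $z\in[2,\frac{n}{2}]$ we have $z\le n-z$, so $\Phi(z)\ge\Phi(n-z)$ because $\Phi$ is decreasing, and then $\Phi(n-z)\ge\psi(n-z)$ by the first fact; the two inequalities combine to give $\Phi(z)\ge\psi(n-z)$. For $z\in[\frac{n}{2},n-2]$ we have $n-z\le z$ with both arguments in $[2,n-2]$, so $\psi(n-z)\le\psi(z)$ because $\psi$ is increasing, and then $\psi(z)\le\Phi(z)$ by the first fact, again yielding $\Phi(z)\ge\psi(n-z)$. In either case $f'(z)\ge 0$, completing the proof.

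The main obstacle is that $f$ is the sum of the increasing term $2\cot\frac{\pi}{z}$ and the decreasing term $2\csc\frac{\pi}{n-z}$, so the sign of $f'$ is not visible from the monotonicity of the summands alone; the crux is the comparison $\Phi(z)\ge\psi(n-z)$, and the clean way to obtain it is precisely to exploit the decreasing behaviour of $\Phi$ and the increasing behaviour of $\psi$ on complementary halves of the interval, with the bound $\psi\le\Phi$ bridging the two. One should double-check the boundary $z=\frac{n}{2}$ (where both cases meet and reduce to $\psi(\tfrac{n}{2})=\Phi(\tfrac{n}{2})\cos\frac{2\pi}{n}\le\Phi(\tfrac{n}{2})$) and confirm that $n-z$ stays in the domain $[2,n-2]$ of Lemma \ref{lem5} in the second case, which it does since $z\le n-2$ forces $n-z\ge 2$.
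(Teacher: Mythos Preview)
Your proof is correct and follows essentially the same route as the paper: both compute $f'(z)=2\bigl(\frac{\pi}{z^2}\csc^2\frac{\pi}{z}-\frac{\pi}{(n-z)^2}\cos\frac{\pi}{n-z}\csc^2\frac{\pi}{n-z}\bigr)$, split the interval at $z=\frac{n}{2}$, and on the two halves use respectively the monotonicity of $z\sin\frac{\pi}{z}$ (Lemma~\ref{lem4}) and of $\frac{\pi}{z^2}\cos\frac{\pi}{z}\csc^2\frac{\pi}{z}$ (Lemma~\ref{lem5}), bridging with $\cos\frac{\pi}{z}\le 1$. Your packaging via $\Phi(z)=\pi\bigl(z\sin\frac{\pi}{z}\bigr)^{-2}$ is slightly slicker than the paper's explicit factorization of $\Phi(z)-\Phi(n-z)$, but the substance is identical.
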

\begin{proof}
To prove the result, we will show that for all $z\in[2,n-2]$, $f'(z) \geq 0$.

Now\vspace{.2cm}
\begin{equation}\label{e1}
f'(z) = 2\bigg(\frac{\pi}{z^2} \csc^2\frac{\pi}{z}-\frac{\pi}{(n-z)^2} \csc\frac{\pi}{n-z} \cot\frac{\pi}{n-z}\bigg)
\end{equation}
To prove $f'(z)\geq0$, we divide the interval in two parts. Firstly let $z\in[\frac{n}{2}, n-2]$. Then $z\geq n-z$. By Lemma \ref{lem5}, we know that for $z\in  [2,n-2]$, $\frac{\pi}{z^2}\cos\frac{\pi}{z}\csc^2\frac{\pi}{z}$ is increasing. Therefore $
\frac{\pi}{(n-z)^2}\csc\frac{\pi}{n-z}\cot\frac{\pi}{n-z}=\frac{\pi}{(n-z)^2}\csc^2\frac{\pi}{n-z}\cos\frac{\pi}{n-z}
\leq \frac{\pi}{z^2} \csc^2\frac{\pi}{z}\cos\frac{\pi}{z}
< \frac{\pi}{z^2}\csc^2\frac{\pi}{z}.$ 
Hence using \eqref{e1}, $f'(z)\geq0$ for $z\in[\frac{n}{2}, n-2]$.

Now let $z\in[2,\frac{n}{2}]$. Then $z\leq n-z$. By Lemma \ref{lem4}, we know that $z\sin\frac{\pi}{z}$ is strictly increasing on $[2,\infty)$. We have $z\sin\frac{\pi}{z} \leq (n-z) \sin\frac{\pi}{n-z}$. From this, we get $\frac{1}{z}\csc\frac{\pi}{n-z} -\frac{1}{n-z}\csc\frac{\pi}{n-z} \geq 0$. Consider
\begin{equation*}
\frac{\pi}{z^2}\csc^2\frac{\pi}{z}-\frac{\pi}{(n-z)^2}\csc^2\frac{\pi}{n-z}=\pi\bigg(\frac{1}{z}\csc\frac{\pi}{z}+\frac{1}{n-z}\csc\frac{\pi}{n-z}\bigg)\bigg(\frac{1}{z}\csc\frac{\pi}{z}-\frac{1}{n-z}\csc\frac{\pi}{n-z}\bigg).
\end{equation*}
Clearly $\frac{1}{z}\csc\frac{\pi}{z}+\frac{1}{n-z}\csc\frac{\pi}{n-z} >0$ and $\frac{1}{z}\csc\frac{\pi}{z} -\frac{1}{n-z}\csc\frac{\pi}{n-z} \geq 0$. Hence $\frac{\pi}{z^2}\csc^2\frac{\pi}{z}-\frac{\pi}{(n-z)^2}\csc^2\frac{\pi}{n-z} \geq0$. This implies that
$\frac{\pi}{(n-z)^2}\csc\frac{\pi}{n-z}\cot\frac{\pi}{n-z}= \frac{\pi}{(n-z)^2}\cos\frac{\pi}{n-z}\csc^2\frac{\pi}{n-z}
<\frac{\pi}{(n-z)^2}\csc^2\frac{\pi}{n-z}
< \frac{\pi}{z^2}\csc^2\frac{\pi}{z}$.
Hence using \eqref{e1} and all these facts, $f'(z)\geq0$ for $z\in[2,\frac{n}{2}]$. Thus $f'(z)\geq0$ for $z\in[2,n-2]$. This proves the result.
\end{proof}
\begin{lemma}\label{lemn2}
Let $z\in[2,n-2]$. The following holds.
\begin{itemize}
	\item [$(1)$] Let $f(z)= \csc \frac{\pi}{2z}+\csc\frac{\pi}{2(n-z)}$ is decreasing on $[2,\frac{n}{2}]$ and increasing on $[\frac{n}{2},n-2]$.
		\item [$(2)$] The function $f(z)= 2\csc \frac{\pi}{z}+\csc\frac{\pi}{2(n-z)}$ is decreasing on $[2,\frac{2n}{3}]$ and increasing on $[\frac{2n}{3},n-2]$.
		\item [$(3)$] The function $f(z)= \csc \frac{\pi}{2z}+2\csc\frac{\pi}{(n-z)}$ is decreasing on $[2,\frac{n}{3}]$ and increasing on $[\frac{n}{3},n-2]$.
\end{itemize}
\end{lemma}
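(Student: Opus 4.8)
The plan is to reduce all three statements to the two increasing functions furnished by Lemma~\ref{lemn1}. Throughout, write $\Phi(z)=\frac{\pi}{z^2}\cos\frac{\pi}{2z}\csc^2\frac{\pi}{2z}$ and $\Psi(z)=\frac{\pi}{z^2}\cos\frac{\pi}{z}\csc^2\frac{\pi}{z}$, so that $\Phi$ and $\Psi$ are exactly the functions $f$ and $g$ of Lemma~\ref{lemn1}, both increasing on $[2,\infty)$ with strictly positive derivatives. Using the identity $\csc\theta\cot\theta=\cos\theta\csc^2\theta$, a direct differentiation gives $\frac{d}{dz}\csc\frac{\pi}{2z}=\tfrac12\Phi(z)$, $\frac{d}{dz}\csc\frac{\pi}{2(n-z)}=-\tfrac12\Phi(n-z)$, $\frac{d}{dz}\bigl(2\csc\frac{\pi}{z}\bigr)=2\Psi(z)$, and $\frac{d}{dz}\bigl(2\csc\frac{\pi}{n-z}\bigr)=-2\Psi(n-z)$. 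Assembling these, I obtain $f'(z)=\tfrac12\bigl(\Phi(z)-\Phi(n-z)\bigr)$ in case $(1)$, $f'(z)=2\Psi(z)-\tfrac12\Phi(n-z)$ in case $(2)$, and $f'(z)=\tfrac12\Phi(z)-2\Psi(n-z)$ in case $(3)$.

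For part $(1)$ the claim is immediate: since $\Phi$ is increasing, $\Phi(z)-\Phi(n-z)$ has the same sign as $z-(n-z)$, so $f'(z)\le 0$ on $[2,\frac n2]$ and $f'(z)\ge 0$ on $[\frac n2,n-2]$, which is precisely the stated behaviour with turning point $\frac n2$.

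For parts $(2)$ and $(3)$ I would argue in two steps. First, $f'$ is itself strictly increasing: in $(2)$ both $2\Psi(z)$ and $-\tfrac12\Phi(n-z)$ increase with $z$ (the latter because $n-z$ decreases while $\Phi$ increases), so $f''(z)=2\Psi'(z)+\tfrac12\Phi'(n-z)>0$, and the computation in $(3)$ is symmetric. Second, I would locate the unique root of $f'$ through a scaling coincidence. At $z=\frac{2n}{3}$ one has $n-z=\frac n3$, and both the angle $\frac{\pi}{z}$ appearing in $\Psi(z)$ and the angle $\frac{\pi}{2(n-z)}$ appearing in $\Phi(n-z)$ collapse to the common value $\frac{3\pi}{2n}$, while the prefactors match as $2\cdot\frac{\pi}{z^2}=\frac{9\pi}{2n^2}=\frac12\cdot\frac{\pi}{(n-z)^2}$; hence $2\Psi(\frac{2n}{3})=\tfrac12\Phi(\frac n3)$ and $f'(\frac{2n}{3})=0$. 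The identical coincidence at $z=\frac n3$, $n-z=\frac{2n}{3}$ gives $f'(\frac n3)=0$ in $(3)$. Since $f'$ is strictly increasing with a single zero, it is negative to the left and positive to the right of that zero, which yields the asserted decreasing/increasing split on the two subintervals.

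The only genuinely delicate point is the verification of the turning points $\frac{2n}{3}$ and $\frac n3$: one must check that the trigonometric angle and the rational prefactor match \emph{simultaneously} under the substitutions $z\mapsto\frac{2n}{3}$, $n-z\mapsto\frac n3$, forcing the exact cancellation $2\Psi(\frac{2n}{3})=\tfrac12\Phi(\frac n3)$. Everything else—the four derivative identities and the strict monotonicity of $f'$—follows mechanically from Lemma~\ref{lemn1} and the identity $\csc\theta\cot\theta=\cos\theta\csc^2\theta$, so no additional analytic input is needed.
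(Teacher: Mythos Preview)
Your argument is correct, and for part~(1) it coincides exactly with the paper's. For parts~(2) and~(3) you take a genuinely different route. The paper never invokes convexity of $f$; instead it silently uses the global identity $\Phi(w)=4\Psi(2w)$ (equivalently $\tfrac12\Phi(n-z)=2\Psi(2(n-z))$) to rewrite $f'(z)=2\bigl[\Psi(z)-\Psi(2(n-z))\bigr]$ in case~(2), and then reads the sign of $f'$ directly off the monotonicity of $\Psi$ and the comparison $z\lessgtr 2(n-z)$. Your approach---showing $f'$ is monotone and then pinning down its zero via the ``scaling coincidence''---reaches the same conclusion but uses the monotonicity of \emph{both} $\Phi$ and $\Psi$ rather than collapsing to a single function. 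What you gain is that you never need to spot the identity $\Phi(w)=4\Psi(2w)$; what the paper gains is that it never needs $f''$. One small overclaim: Lemma~\ref{lemn1} only asserts that $\Phi,\Psi$ are increasing, not that their derivatives are strictly positive, so your $f''>0$ should read $f''\ge 0$; this is harmless, since non-strict monotonicity of $f'$ together with $f'(\tfrac{2n}{3})=0$ already forces $f'\le 0$ on the left and $f'\ge 0$ on the right.
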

\begin{proof}
$(1).$ To show that $f(z)$ is decreasing on $[2,\frac{n}{2}]$, it is sufficient to show that $f'(z) \leq 0$.\vspace{.1cm}

Since $z \leq (n-z)$ for $z\in [2,\frac{n}{2}]$, therefore using Lemma \ref{lemn1}, we get
\begin{eqnarray*}
f'(z) &=& \frac{\pi}{2z^2} \csc^2\frac{\pi}{2z} \cos \frac{\pi}{2z}-\frac{\pi}{2(n-z)^2} \csc^2\frac{\pi}{2(n-z)} \cos \frac{\pi}{2(n-z)}\\
&\leq& \frac{\pi}{2(n-z)^2} \csc^2\frac{\pi}{2(n-z)} \cos \frac{\pi}{2(n-z)}-\frac{\pi}{2(n-z)^2} \csc^2\frac{\pi}{2(n-z)} \cos \frac{\pi}{2(n-z)}=0
\end{eqnarray*}
Hence $f(z)$ is decreasing on $[2,\frac{n}{2}]$.\vspace{.1cm}

Now we will show that $f'(z)\geq 0$. Since $z \geq (n-z)$ for $z\in [\frac{n}{2},n-2]$, therefore using Lemma \ref{lemn1}, we obtain
\begin{eqnarray*}
	f'(z) &=& \frac{\pi}{2z^2} \csc^2\frac{\pi}{2z} \cos \frac{\pi}{2z}-\frac{\pi}{2(n-z)^2} \csc^2\frac{\pi}{2(n-z)} \cos \frac{\pi}{2(n-z)}\\
	&\geq& \frac{\pi}{2(n-z)^2} \csc^2\frac{\pi}{2(n-z)} \cos \frac{\pi}{2(n-z)}-\frac{\pi}{2(n-z)^2} \csc^2\frac{\pi}{2(n-z)} \cos \frac{\pi}{2(n-z)}=0
\end{eqnarray*}
Hence $f(z)$ is increasing on $[\frac{n}{2},n-2]$.\\\vspace{.1cm}
$(2.)$ To show that $f(z)$ is decreasing on $[2,\frac{2n}{3}]$, it is enough to prove that $f'(z) \leq 0$.\vspace{.1cm}

Since $z \leq 2(n-z)$ for $z\in [2,\frac{2n}{3}]$, therefore using Lemma \ref{lemn1}, we get
\begin{eqnarray*}
	f'(z) &=& \frac{2\pi}{z^2} \csc^2\frac{\pi}{z} \cos \frac{\pi}{z}-\frac{\pi}{2(n-z)^2} \csc^2\frac{\pi}{2(n-z)} \cos \frac{\pi}{2(n-z)}\\
	&\leq& \frac{2\pi}{4(n-z)^2} \csc^2\frac{\pi}{2(n-z)} \cos \frac{\pi}{2(n-z)}-\frac{\pi}{2(n-z)^2} \csc^2\frac{\pi}{2(n-z)} \cos \frac{\pi}{2(n-z)}=0
\end{eqnarray*}
Hence $f(z)$ is decreasing on $[2,\frac{2n}{3}]$. \vspace{.1cm}

Now we will show that $f'(z)\geq 0$. Since $z \geq 2(n-z)$ for $z\in [\frac{2n}{3},n-2]$, therefore using Lemma \ref{lemn1}, we obtain
\begin{eqnarray*}
	f'(z) &=& \frac{2\pi}{z^2} \csc^2\frac{\pi}{z} \cos \frac{\pi}{z}-\frac{\pi}{2(n-z)^2} \csc^2\frac{\pi}{2(n-z)} \cos \frac{\pi}{2(n-z)}\\
	&\geq& \frac{2\pi}{4(n-z)^2} \csc^2\frac{\pi}{2(n-z)} \cos \frac{\pi}{2(n-z)}-\frac{\pi}{2(n-z)^2} \csc^2\frac{\pi}{2(n-z)} \cos \frac{\pi}{2(n-z)}=0
\end{eqnarray*}
Hence $f(z)$ is increasing on $[\frac{2n}{3},n-2]$.

Analogously $(3)$ can be proved.
\end{proof}
\begin{lemma}\label{lemn3}
Suppose $f(z)= 2\cot\frac{\pi}{z} + \csc \frac{\pi}{2(n-z)}$. For $z \in [2,n-2]$, $f(z)$ is increasing.
\end{lemma}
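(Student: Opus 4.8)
The plan is to show that $f'(z)\geq 0$ throughout $[2,n-2]$, following the same two-interval strategy that was used for Lemma \ref{lem6}. First I would differentiate to obtain
\[
f'(z) = \frac{2\pi}{z^2}\csc^2\frac{\pi}{z} - \frac{\pi}{2(n-z)^2}\csc\frac{\pi}{2(n-z)}\cot\frac{\pi}{2(n-z)},
\]
and rewrite the subtracted term via $\csc\theta\cot\theta = \cos\theta\,\csc^2\theta$, so that the desired inequality $f'(z)\geq 0$ becomes
\[
\frac{2\pi}{z^2}\csc^2\frac{\pi}{z} \;\geq\; \frac{\pi}{2(n-z)^2}\cos\frac{\pi}{2(n-z)}\csc^2\frac{\pi}{2(n-z)}.
\]
The right-hand side is exactly $\tfrac12$ times the first function of Lemma \ref{lemn1} evaluated at $n-z$, which is the observation that lets me bring that lemma's monotonicity to bear. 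I would then split the interval at $z=\frac{n}{2}$.

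For $z\in[\frac{n}{2},n-2]$ I would argue as in Lemma \ref{lem6}. Here $z\geq n-z$, so since the first function in Lemma \ref{lemn1} is increasing, the right-hand side is at most $\frac{\pi}{2z^2}\cos\frac{\pi}{2z}\csc^2\frac{\pi}{2z}$. It then remains to compare this quantity with the left-hand side; applying the half-angle identity $\csc^2\frac{\pi}{z}=\frac14\csc^2\frac{\pi}{2z}\sec^2\frac{\pi}{2z}$ rewrites the left-hand side as $\frac{\pi}{2z^2}\csc^2\frac{\pi}{2z}\sec^2\frac{\pi}{2z}$, and the required inequality collapses to $\cos^3\frac{\pi}{2z}\leq 1$, which is immediate.

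For $z\in[2,\frac{n}{2}]$ I would instead invoke Lemma \ref{lem4}. Since $z\leq\frac{n}{2}$ forces $2(n-z)\geq z$ with both quantities at least $2$, the monotonicity of $t\sin\frac{\pi}{t}$ gives $2(n-z)\sin\frac{\pi}{2(n-z)}\geq z\sin\frac{\pi}{z}$, equivalently $\frac{2}{z}\csc\frac{\pi}{z}\geq\frac{1}{n-z}\csc\frac{\pi}{2(n-z)}$. Squaring and multiplying by $\frac{\pi}{2}$ yields $\frac{2\pi}{z^2}\csc^2\frac{\pi}{z}\geq\frac{\pi}{2(n-z)^2}\csc^2\frac{\pi}{2(n-z)}$, and dropping the factor $\cos\frac{\pi}{2(n-z)}\leq 1$ on the right recovers the target inequality. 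Together with the first case this gives $f'(z)\geq 0$ on all of $[2,n-2]$.

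The step I expect to be the main obstacle is reconciling the full argument $\frac{\pi}{z}$ in the left-hand term with the half-argument $\frac{\pi}{2(n-z)}$ in the right-hand term, together with the mismatched constants ($2$ against $\frac12$). In the upper interval the half-angle identity for $\csc^2$ is precisely what bridges this gap and reduces everything to $\cos^3\leq 1$; in the lower interval it is the factor $2(n-z)$, rather than $n-z$, that keeps Lemma \ref{lem4} applicable on the whole range $[2,\frac{n}{2}]$. Lining up this bookkeeping correctly is the only delicate part of the argument.
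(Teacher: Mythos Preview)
Your argument is correct and follows the same overall blueprint as the paper: compute $f'$, split the interval, drop the factor $\cos\frac{\pi}{2(n-z)}\leq 1$ on the small-$z$ portion and use the monotonicity of $t\sin\frac{\pi}{t}$ (Lemma~\ref{lem4}), and on the large-$z$ portion invoke Lemma~\ref{lemn1}. The only real difference is where you cut and how you handle the upper piece. The paper splits at $z=\frac{2n}{3}$, the point where $z=2(n-z)$; on $[\frac{2n}{3},n-2]$ it first inserts the harmless factor $\cos\frac{\pi}{z}\leq 1$ on the left, obtaining $2g(z)$ with $g$ the second function of Lemma~\ref{lemn1}, and then compares $g(z)$ directly with $g(2(n-z))$, which matches the right-hand side without any half-angle manipulation. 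Your split at $\frac{n}{2}$ instead compares the \emph{first} function of Lemma~\ref{lemn1} at $z$ and $n-z$, and then you need the half-angle identity $\csc^2\frac{\pi}{z}=\frac14\csc^2\frac{\pi}{2z}\sec^2\frac{\pi}{2z}$ to close the gap, reducing to $\cos^3\frac{\pi}{2z}\leq 1$. Both routes work; the paper's avoids the half-angle step at the cost of the (equally harmless) insertion of $\cos\frac{\pi}{z}$, while yours keeps the symmetric split point familiar from Lemma~\ref{lem6}. Your lower-interval argument is in fact valid all the way up to $z=\frac{2n}{3}$, so either cut is fine.
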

\begin{proof}
We will show that $f'(z) \geq 0$ for $z \in [2,n-2]$.\vspace{.1cm}

Since $\cos\frac{\pi}{z} \leq 1$ and $z \geq 2(n-z)$ for $z\in[\frac{2n}{3},n-2]$, therefore by Lemma \ref{lemn1}, we have
\begin{eqnarray*}
f'(z) &=& \frac{2\pi}{z^2} \csc^2\frac{\pi}{z}-\frac{\pi}{2(n-z)^2} \csc^2\frac{\pi}{2(n-z)} \cos \frac{\pi}{2(n-z)}\\
&\geq&  \frac{2\pi}{z^2} \csc^2\frac{\pi}{z}\cos\frac{\pi}{z}-\frac{\pi}{2(n-z)^2} \csc^2\frac{\pi}{2(n-z)} \cos \frac{\pi}{2(n-z)}\\ 
&\geq& \frac{2\pi}{4(n-z)^2} \csc^2\frac{\pi}{2(n-z)} \cos \frac{\pi}{2(n-z)}-\frac{\pi}{2(n-z)^2} \csc^2\frac{\pi}{2(n-z)} \cos \frac{\pi}{2(n-z)}=0	
\end{eqnarray*}
Also $-\cos\frac{\pi}{z} \geq -1$ and $z \leq 2(n-z)$ for $z\in[2,\frac{2n}{3}]$, therefore by proof of Lemma $2.4$ \cite{FMA2017}, we see that 
\begin{eqnarray*}
	f'(z) &=& \frac{2\pi}{z^2} \csc^2\frac{\pi}{z}-\frac{\pi}{2(n-z)^2} \csc^2\frac{\pi}{2(n-z)} \cos \frac{\pi}{2(n-z)}\\
	&\geq&  \frac{2\pi}{z^2} \csc^2\frac{\pi}{z}-\frac{\pi}{2(n-z)^2} \csc^2\frac{\pi}{2(n-z)} \geq 0. 	
\end{eqnarray*}
Hence $f(z)$ is increasing on $[2,n-2]$.
\end{proof}
\begin{lemma}[Farooq et al. \cite{FMA2017}]\label{lem7}
	If $z\in (0,\frac{\pi}{2}]$ then
	\begin{equation*}
	\frac{1}{z}-0.429z \leq \cot z \leq \frac{1}{z}-\frac{z}{3}.
	\end{equation*}	
\end{lemma}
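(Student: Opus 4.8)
The plan is to obtain both inequalities from the classical Laurent expansion of the cotangent about the origin,
\[
\cot z = \frac1z - \frac z3 - \frac{z^3}{45} - \frac{2z^5}{945} - \cdots = \frac1z - \sum_{k\ge 1} c_k\, z^{2k-1},
\]
which converges for $0<z<\pi$ and in particular on $(0,\tfrac\pi2]$; here every coefficient $c_k=2^{2k}|B_{2k}|/(2k)!$ (with $B_{2k}$ the Bernoulli numbers) is strictly positive, and $c_1=\tfrac13$, $c_2=\tfrac1{45}$. All the work is then in reading off the two appropriate truncations.

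For the upper bound I would truncate after the linear term. Because $z\in(0,\pi)$ makes the tail $\sum_{k\ge2}c_k z^{2k-1}$ a sum of nonnegative terms, the expansion gives $\cot z=\frac1z-\frac z3-(\text{nonnegative tail})\le \frac1z-\frac z3$, with equality only in the limit $z\to0^+$. This half is immediate and needs no estimation.

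For the lower bound I would control the \emph{size} of the tail rather than just its sign. Set
\[
\phi(z)=\frac1z\Big(\frac1z-\cot z\Big)=\frac13+\frac{z^2}{45}+\frac{2z^4}{945}+\cdots,
\]
a power series in $z^2$ with positive coefficients, hence strictly increasing on $(0,\pi)$. Consequently its maximum over $(0,\tfrac\pi2]$ is attained at the right endpoint, where $\cot\tfrac\pi2=0$ yields $\phi(\tfrac\pi2)=\tfrac{1}{(\pi/2)^2}=\tfrac{4}{\pi^2}=0.4052\ldots$. Since $\tfrac{4}{\pi^2}\le 0.429$, we get $\frac1z-\cot z=z\,\phi(z)\le 0.429\,z$, i.e. $\cot z\ge \frac1z-0.429\,z$, as required.

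The only genuinely quantitative step is the comparison $4/\pi^2\le 0.429$, and I expect this to be the main (indeed the sole) obstacle: the constant $0.429$ is evidently chosen just large enough to leave a small positive slack of about $0.024$, so the bound is tight and nothing can be loosened. If one prefers to avoid quoting the full Laurent series, the same conclusions follow from a single elementary inequality, $\csc^2 z-\frac1{z^2}\ge\frac13$ on $(0,\pi)$ (itself read off from that expansion). Indeed, for the upper bound $u(z)=\frac1z-\frac z3-\cot z$ satisfies $u(0^+)=0$ and $u'(z)=\csc^2 z-\frac1{z^2}-\frac13\ge0$, so $u$ is increasing and $u\ge0$; for the lower bound $v(z)=\cot z-\frac1z+0.429\,z$ satisfies $v(0^+)=0$ and $v'(z)=0.429-\big(\csc^2 z-\frac1{z^2}\big)$, which (as $\csc^2 z-\frac1{z^2}$ increases from $\frac13$ to $1-\frac4{\pi^2}$) passes once from positive to negative, so $v$ rises then falls and its infimum on $(0,\tfrac\pi2]$ is $\min\{v(0^+),v(\tfrac\pi2)\}=\min\{0,\,0.037\ldots\}=0$. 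Either route reduces the lemma to that one inequality.
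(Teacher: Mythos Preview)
The paper does not prove this lemma at all: it is quoted verbatim from Farooq et al.\ \cite{FMA2017} and used as a black box, so there is no in-paper argument to compare against. Your proposal is correct and entirely self-contained. The upper bound drops out immediately from the positivity of the tail in the Laurent expansion of $\cot z$, and for the lower bound your key observation---that $\phi(z)=\frac{1}{z}\bigl(\frac{1}{z}-\cot z\bigr)$ is a power series in $z^2$ with positive coefficients, hence increasing, with $\phi(\pi/2)=4/\pi^2\approx 0.4053<0.429$---is clean and sharp; it even shows that $0.429$ could be replaced by $4/\pi^2$. The alternative derivative argument via $\csc^2 z-\tfrac{1}{z^2}$ is also sound (that quantity is $\tfrac13+\tfrac{z^2}{15}+\cdots$, increasing from $\tfrac13$ to $1-\tfrac{4}{\pi^2}$ on $(0,\tfrac{\pi}{2}]$), and your endpoint check $v(\tfrac{\pi}{2})=-\tfrac{2}{\pi}+0.429\cdot\tfrac{\pi}{2}\approx 0.037>0$ is accurate. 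Either route is more informative than what the present paper offers, since here the inequality is simply imported.
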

For $ 0 < z < \frac{\pi}{2}$, we have
\begin{equation}\label{e2}
z-\frac{z^3}{3!} \leq \sin z \leq z.
\end{equation}
Khan et al. \cite{KFAA} prove the following result.
\begin{lemma}[Khan et al. \cite{KFAA}]\label{inequality}
Suppose $z,d,e$ be real numbers such that $z\geq d > 0$ and $e > 0$. Then
\begin{equation*}
\frac{\pi z}{ez^2-\pi^2} \leq \frac{\pi d}{ed^2-\pi^2}.
\end{equation*}
\end{lemma}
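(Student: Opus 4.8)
The plan is to recognize that both sides of the claimed inequality are values of the single function $g(t) = \dfrac{\pi t}{et^2 - \pi^2}$ (with $e$ fixed), so that the whole statement amounts to the monotonicity assertion that $g$ is non-increasing wherever it is defined; the inequality $\frac{\pi z}{ez^2-\pi^2}\le \frac{\pi d}{ed^2-\pi^2}$ is then just $g(z)\le g(d)$, read off from $z\ge d$. Rather than differentiate $g$ and then argue about continuity across its pole, I would form the difference $g(d)-g(z)$ directly, since the algebra is short and the sign structure becomes transparent.

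Putting the two fractions over a common denominator gives
\[
g(d) - g(z) = \frac{\pi d}{ed^2 - \pi^2} - \frac{\pi z}{ez^2 - \pi^2} = \frac{\pi d(ez^2 - \pi^2) - \pi z(ed^2 - \pi^2)}{(ed^2 - \pi^2)(ez^2 - \pi^2)}.
\]
The key step is to simplify the numerator. Expanding, the quadratic terms give $\pi e d z(z-d)$ and the $\pi^3$ terms give $\pi^3(z-d)$, so the numerator factors cleanly as $\pi(z-d)(edz+\pi^2)$, yielding
\[
g(d) - g(z) = \frac{\pi(z - d)(edz + \pi^2)}{(ed^2 - \pi^2)(ez^2 - \pi^2)}.
\]
With this identity the sign of the numerator is immediate: $\pi>0$, $z-d\ge 0$ by hypothesis, and $edz+\pi^2>0$ because $e,d,z>0$, so the numerator is nonnegative. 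Hence $g(z)\le g(d)$ holds as soon as the denominator $(ed^2-\pi^2)(ez^2-\pi^2)$ is positive.

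I expect the sign of that denominator to be the only genuine obstacle. The function $g$ has a pole at $t=\pi/\sqrt{e}$, and across it $g$ jumps from $-\infty$ to $+\infty$; monotonicity can therefore be asserted only on a single branch, and indeed the factored denominator is positive precisely when $d$ and $z$ lie on the same side of $\pi/\sqrt{e}$, i.e. when $ed^2-\pi^2$ and $ez^2-\pi^2$ share a common sign. In the regime in which this lemma is invoked the two quantities are of the same sign (both factors negative, or both positive, a condition that under $z\ge d$ is secured by pinning down one endpoint relative to $\pi/\sqrt{e}$), so the denominator is positive and the desired inequality follows from the factorization. I would finish by making this common-sign condition on the denominators explicit, so that the nonnegativity of $\pi(z-d)(edz+\pi^2)$ delivers the conclusion with no remaining gap.
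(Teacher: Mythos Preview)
The paper does not prove this lemma; it merely quotes it from Khan et~al.\ \cite{KFAA}, so there is no in-paper argument to compare against. Your algebraic route---forming the difference and factoring the numerator as $\pi(z-d)(edz+\pi^2)$---is correct and is the natural way to establish the monotonicity.

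You are also right to flag the denominator issue, and in fact this is a genuine defect of the lemma \emph{as literally stated}: with only the hypotheses $z\ge d>0$ and $e>0$ the inequality can fail (take $e=1$, $d=1$, $z=10$; then the left side is positive and the right side negative). What is missing is precisely the condition you isolate, that $ed^2-\pi^2$ and $ez^2-\pi^2$ share a sign. In the paper's sole use of the lemma (inequality \eqref{e5}, with $e=6$, $d=4$, $z=n-4\ge 4$) both factors are positive, so the application is sound. Your write-up would be improved by stating that extra hypothesis explicitly rather than leaving it as a caveat at the end; once you assume $ed^2-\pi^2>0$ (which forces $ez^2-\pi^2>0$ since $z\ge d$), the factorization immediately gives $g(d)-g(z)\ge 0$ and the proof is complete.
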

\section{Energy ordering}
 Sidigraphs in $\mathcal{D}_n^s$ are classified into three categories: the sidigraphs whose cycles are of even length, the sidigraphs whose cycles are of odd length and the sidigraphs whose one cycle is of even length and one is of odd length. In the following section, we separately investigate energy ordering in all three categories and find maximal energy.  
\subsection{Both cycles of even length}
In this section, we investigate energy ordering of vertex-disjoint bicyclic sidigraphs whose both cycles are of even length.

Yang and Wang \cite{XL2019} gave the following corollary about those bicyclic sidigraphs in $\mathcal{D}_n^s$ whose both cycles are positive. For details, see \cite{XL2019}.
\begin{corollary}[Yang and Wang \cite{XL2019}]\label{cor1}
Let $n\geq 4$, $r\equiv0(\bmod~2)$, $n-r \geq2$ and $r\in[2,n-2]$. Then it holds that:
\begin{itemize}
	\item[$(i)$] If $n\equiv0(\bmod~2)$, then
	$E(D_n^s[2, n-2]) \geq E(D_n^s[r, n-r]).$
	\item[$(ii)$] If $n\equiv1(\bmod~2)$, then 
	$E(D_n^s[2, n-3]) \geq E(D_n^s[r, n-r-1]).$
\end{itemize}
\end{corollary}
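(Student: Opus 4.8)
The plan is to reduce everything to a comparison of sums of positive-cycle energies and then to split according to the residue of the cycle lengths modulo $4$, since the formula \eqref{posi} for $E(C_m)$ changes form depending on whether $m\equiv 0$ or $m\equiv 2\pmod 4$. By Lemma \ref{strong} and the displayed equations, $E(D_n^s[r,n-r])=E(C_r)+E(C_{n-r})$, and the isolated vertices contribute nothing; hence in part $(ii)$ the quantity $E(D_n^s[r,n-r-1])=E(C_r)+E(C_{n-r-1})$ is exactly the two-positive-cycle energy associated with the \emph{even} integer $n-1$. So part $(ii)$ is immediately reduced to part $(i)$ applied with $n$ replaced by $n-1$, and it suffices to prove $(i)$.

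For part $(i)$ I would fix even $r\in[2,n-2]$ and distinguish $n\equiv 2$ from $n\equiv 0\pmod 4$. When $n\equiv 2\pmod 4$, the residues of $r$ and $n-r$ are opposite: if $r\equiv 2\pmod 4$ then $n-r\equiv 0\pmod 4$, giving $E(C_r)+E(C_{n-r})=2(\csc\frac{\pi}{r}+\cot\frac{\pi}{n-r})$, which is the function of Lemma \ref{lem2} and is therefore decreasing in $r$; if $r\equiv 0\pmod 4$ then $n-r\equiv 2\pmod 4$, and writing $w=n-r$ the same sum equals $2(\csc\frac{\pi}{w}+\cot\frac{\pi}{n-w})$, again covered by Lemma \ref{lem2}. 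In both sub-cases the maximum over admissible lengths is attained at the smallest argument, i.e.\ at $r=2$ (equivalently $w=2$), which is precisely $E(D_n^s[2,n-2])$; this settles $n\equiv 2\pmod 4$.

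The case $n\equiv 0\pmod 4$ is where the real work lies, since now $r$ and $n-r$ share the same residue mod $4$. If both are $\equiv 2\pmod 4$, then $E(C_r)+E(C_{n-r})=2(\csc\frac{\pi}{r}+\csc\frac{\pi}{n-r})$, which by Lemma \ref{lem3} together with its symmetry $z\mapsto n-z$ is maximized at the endpoints $r\in\{2,n-2\}$, giving again $E(D_n^s[2,n-2])=2+2\csc\frac{\pi}{n-2}$. If instead both are $\equiv 0\pmod 4$ (so $r,n-r\geq 4$), then $E(C_r)+E(C_{n-r})=2(\cot\frac{\pi}{r}+\cot\frac{\pi}{n-r})$, and Lemma \ref{lem1} bounds this above by its value at the continuous maximum $z=\frac{n}{2}$, namely $4\cot\frac{2\pi}{n}$. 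It thus remains to establish the single inequality
\[
2+2\csc\frac{\pi}{n-2}\;\geq\;4\cot\frac{2\pi}{n},\qquad n\equiv 0\pmod 4.
\]

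I expect this last trigonometric inequality to be the main obstacle, and I would dispatch it with the elementary estimates collected earlier. From \eqref{e2} we have $\sin\frac{\pi}{n-2}\leq\frac{\pi}{n-2}$, hence $\csc\frac{\pi}{n-2}\geq\frac{n-2}{\pi}$; and the upper bound $\cot z\leq\frac{1}{z}-\frac{z}{3}$ of Lemma \ref{lem7}, applied with $z=\frac{2\pi}{n}\in(0,\frac{\pi}{2}]$ (valid since $n\geq 4$), gives $4\cot\frac{2\pi}{n}\leq\frac{2n}{\pi}-\frac{8\pi}{3n}$. Substituting, the desired inequality reduces to $2+\frac{2(n-2)}{\pi}\geq\frac{2n}{\pi}-\frac{8\pi}{3n}$, i.e.\ to $2-\frac{4}{\pi}+\frac{8\pi}{3n}\geq 0$, which is immediate because $2>\frac{4}{\pi}$. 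Collecting the sub-cases, the maximum among two positive even cycles is attained at $D_n^s[2,n-2]$ in part $(i)$ and, via the reduction $n\mapsto n-1$, at $D_n^s[2,n-3]$ in part $(ii)$, as claimed.
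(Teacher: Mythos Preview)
Your argument is correct. Note, however, that the paper does not give its own proof of this corollary: it is quoted from Yang and Wang~\cite{XL2019} with the remark ``For details, see~\cite{XL2019}.'' So there is nothing in the paper to compare your argument against; your write-up is a self-contained substitute built from the monotonicity lemmas and elementary estimates already assembled in Section~2.

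The case split you chose---first by $n\bmod 4$, then by $r\bmod 4$---is exactly what the piecewise formula~\eqref{posi} forces, and in three of the four sub-cases the conclusion is immediate from Lemmas~\ref{lem1}--\ref{lem3}. The only genuine comparison needed is between the two maxima in the $n\equiv 0\pmod 4$ case, namely $2+2\csc\frac{\pi}{n-2}$ versus $4\cot\frac{2\pi}{n}$; your use of $\sin x\le x$ from~\eqref{e2} and $\cot x\le \frac{1}{x}-\frac{x}{3}$ from Lemma~\ref{lem7} reduces this to $2-\frac{4}{\pi}+\frac{8\pi}{3n}\ge 0$, which is clear. The reduction of part~$(ii)$ to part~$(i)$ via $n\mapsto n-1$ is also sound, since the isolated vertex contributes nothing to the energy. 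For the boundary value $n=4$ (respectively $n=5$ in part~$(ii)$) the only admissible $r$ is $r=2$, so the statement is trivial there and the hypotheses $n>4$ in Lemmas~\ref{lem1}--\ref{lem3} cause no trouble.
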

Now we give the energy ordering of those bicyclic sidigraphs in $\mathcal{D}_n^s$ whose one cycle is of positive sign and other cycle is of negative sign.
\begin{lemma}\label{l1}
Let $n>5$ and $n\equiv2(\bmod~4)$. Take $r\in[2,n-2]$ with $r\equiv0(\bmod~2)$ and $n-r\equiv0(\bmod~2)$. Then maximum value of $E(D_n^s[r,{\boldsymbol{n-r}}])$ is attained at $r=2$ and maximum value of $E(D_n^s[{\boldsymbol{r}},n-r])$ is attained at $r=4$. Therefore the following energy ordering holds: 
\begin{itemize}
	\item [$(i)$] If $\frac{n}{2}-3 \equiv 2(\bmod~4)$ then
	\begin{eqnarray*}
	&&E(D_n^s[2,{\boldsymbol{n-2}}]) > E(D_n^s[6,{\boldsymbol{n-6}}])> \dots > E(D_n^s[\frac{n-6}{2},{\boldsymbol{\frac{n+6}{2}}}])\\
	&>& E(D_n^s[\frac{n-2}{2},{\boldsymbol{\frac{n+2}{2}}}])> E(D_n^s[\frac{n-10}{2},{\boldsymbol{\frac{n+10}{2}}}]) >\dots> E(D_n^s[4,{\boldsymbol{n-4}}]). 	
	\end{eqnarray*}
Also
\begin{eqnarray*}
&&E(D_n^s[{\boldsymbol{4}},n-4]) > E(D_n^s[{\boldsymbol{8}},n-8])> \dots > E(D_n^s[{\boldsymbol{\frac{n-2}{2}}},\frac{n+2}{2}])\\
&>& E(D_n^s[{\boldsymbol{\frac{n-6}{2}}},\frac{n+6}{2}])> E(D_n^s[{\boldsymbol{\frac{n-14}{2}}},\frac{n+14}{2}]) >\dots> E(D_n^s[{\boldsymbol{2}},n-2]).	
\end{eqnarray*}
\item[$(ii)$] If $\frac{n}{2}-3 \equiv 0(\bmod~4)$ then
\begin{eqnarray*}
	&&E(D_n^s[2,{\boldsymbol{n-2}}]) > E(D_n^s[6,{\boldsymbol{n-6}}])> \dots > E(D_n^s[\frac{n-2}{2},{\boldsymbol{\frac{n+2}{2}}}])\\
	&>& E(D_n^s[\frac{n-6}{2},{\boldsymbol{\frac{n+6}{2}}}])> E(D_n^s[\frac{n-14}{2},{\boldsymbol{\frac{n+14}{2}}}]) >\dots> E(D_n^s[4,{\boldsymbol{n-4}}]).	
\end{eqnarray*}
Also
\begin{eqnarray*}
	&&E(D_n^s[{\boldsymbol{4}},n-4]) > E(D_n^s[{\boldsymbol{8}},n-8])> \dots > E(D_n^s[{\boldsymbol{\frac{n-6}{2}}},\frac{n+6}{2}])\\
	&>& E(D_n^s[{\boldsymbol{\frac{n-2}{2}}},\frac{n+2}{2}])> E(D_n^s[{\boldsymbol{\frac{n-10}{2}}},\frac{n+10}{2}]) >\dots> E(D_n^s[{\boldsymbol{2}},n-2]).
\end{eqnarray*}
\end{itemize}
\end{lemma}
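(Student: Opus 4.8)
The plan is to reduce every energy in the statement to one of two functions and then combine the monotonicity lemmas with a single uniform estimate. Write $\phi(z)=2(\cot\frac{\pi}{z}+\cot\frac{\pi}{n-z})$ (the function of Lemma \ref{lem1}) and $\psi(z)=2(\csc\frac{\pi}{z}+\csc\frac{\pi}{n-z})$ (the function of Lemma \ref{lem3}). First I would record the parity bookkeeping: since $n\equiv 2\ (\bmod\ 4)$ and $r$ is even, exactly one of $r,n-r$ is $\equiv 0\ (\bmod\ 4)$ and the other is $\equiv 2\ (\bmod\ 4)$. Feeding this into \eqref{posi}--\eqref{nega} together with Lemma \ref{strong} gives $E(D_n^s[r,\boldsymbol{n-r}])=\psi(r)$ when $r\equiv 2\ (\bmod\ 4)$ and $=\phi(r)$ when $r\equiv 0\ (\bmod\ 4)$; symmetrically $E(D_n^s[\boldsymbol{r},n-r])=\phi(r)$ when $r\equiv 2$ and $=\psi(r)$ when $r\equiv 0$. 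Thus each of the two claimed orderings is, after this translation, an interleaving of the values of $\psi$ on the residue class $\equiv 2\ (\bmod\ 4)$ with the values of $\phi$ on the class $\equiv 0\ (\bmod\ 4)$ (the roles swapped for the second ordering), with $r$ running over the even integers in $[2,\frac{n-2}{2}]$.

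Second, I would order the two subsequences separately. By Lemma \ref{lem3}, $\psi$ is decreasing on $[2,\frac{n}{2}]$, so along the $\equiv 2$ class the values $\psi(2)>\psi(6)>\cdots$ decrease as $r$ increases; by Lemma \ref{lem1}, $\phi$ is increasing on $[2,\frac{n}{2}]$, so reading the $\equiv 0$ class from the largest admissible $r$ down to $4$ produces a decreasing run of $\phi$-values. This already yields the two monotone blocks appearing in the statement. The only place the case split $\frac{n}{2}-3\equiv 2$ versus $\equiv 0\ (\bmod\ 4)$ enters is in deciding whether the even integer $\frac{n-2}{2}$ closest to $\frac{n}{2}$ lies in the $\equiv 0$ class (case $(i)$, so it is the top $\phi$-term) or in the $\equiv 2$ class (case $(ii)$, so it is the bottom $\psi$-term); this is exactly the bookkeeping that fixes the endpoints of the two blocks.

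The crux is the single inequality that glues the $\psi$-block above the $\phi$-block, namely that the smallest listed $\psi$-value still exceeds the largest listed $\phi$-value. Here I would avoid comparing the two functions point-by-point (their arguments are not aligned) and instead bound each against the common cut-off $\frac{2n}{\pi}$. For any even $p\in[2,n-2]$ the elementary bound $\csc x\ge \frac1x$ for $x\in(0,\pi)$ (the right half of \eqref{e2}) gives $\psi(p)\ge \frac{2}{\pi}\big(p+(n-p)\big)=\frac{2n}{\pi}$, while Lemma \ref{lem7} in the form $\cot x\le \frac1x-\frac{x}{3}$ gives $\phi(q)\le \frac{2n}{\pi}-\frac{2\pi}{3}\big(\frac1q+\frac1{n-q}\big)<\frac{2n}{\pi}$ for any even $q\in[2,n-2]$. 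Hence $\psi(p)>\phi(q)$ holds simultaneously for all such $p,q$, and in particular for the adjacent central points $p,q\in\{\frac{n-2}{2},\frac{n-6}{2}\}$ that meet at the junction of the two blocks in either case. I expect this estimate to be the main obstacle, since a naive termwise comparison fails when the $\psi$-argument is more central than the $\phi$-argument (case $(ii)$); the symmetric cut-off $\frac{2n}{\pi}$ is what makes it clean and, pleasantly, uniform across both cases.

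Finally, the maxima fall out of the ordering. The top of the $\psi$-block is $\psi(2)=E(D_n^s[2,\boldsymbol{n-2}])$, and since every other listed value is either a smaller $\psi$-value (Lemma \ref{lem3}) or a $\phi$-value already shown to be $<\frac{2n}{\pi}\le\psi(2)$, the maximum of $E(D_n^s[r,\boldsymbol{n-r}])$ is attained at $r=2$; the identical argument applied to the second ordering, whose $\psi$-block now begins at $r=4$, gives the maximum of $E(D_n^s[\boldsymbol{r},n-r])$ at $r=4$. I would close by noting that restricting $r$ to $[2,\frac{n-2}{2}]$ loses nothing: the identity $D_n^s[r,\boldsymbol{n-r}]=D_n^s[\boldsymbol{n-r},r]$ lets the two orderings, between them, exhaust all even--even bicyclic sidigraphs carrying one positive and one negative cycle.
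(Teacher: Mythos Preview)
Your proposal is correct and follows essentially the same route as the paper: reduce via the parity bookkeeping to the two functions $\phi,\psi$, order each subsequence by Lemmas~\ref{lem1} and~\ref{lem3}, and glue the blocks by sandwiching both against $\frac{2n}{\pi}$ using $\sin x\le x$ (equation~\eqref{e2}) and $\cot x\le \frac{1}{x}-\frac{x}{3}$ (Lemma~\ref{lem7}). The only difference is that the paper carries out the gluing only at the single junction pair $\{\frac{n}{2}-3,\frac{n}{2}-1\}$ needed for case~$(i)$ and then declares case~$(ii)$ analogous, whereas you prove the uniform inequality $\psi(p)>\phi(q)$ for all admissible $p,q$ in one stroke; this is a mild streamlining but not a different idea.
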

\begin{proof}
$(i)$. If $r\in[2,\frac{n}{2}]$ then $n-r \in[\frac{n}{2},n-2]$ and if $r\in[\frac{n}{2},n-2]$ then $n-r \in[2,\frac{n}{2}]$. Therefore we take $r\in[2,\frac{n}{2}]$.

Let $r\equiv2(\bmod~4)$. Then $n-r\equiv0(\bmod~4)$. Using \eqref{posi} and \eqref{nega}, we have
\begin{eqnarray*}
E(D_n^s[r,{\boldsymbol{n-r}}])&=& 2(\csc\frac{\pi}{r} +\csc\frac{\pi}{n-r}),\\
E(D_n^s[{\boldsymbol{r}},n-r])&=& 2(\cot\frac{\pi}{r} +\cot\frac{\pi}{n-r}).
\end{eqnarray*}
By Lemma \ref{lem1} and Lemma \ref{lem3}, we know that $ 2(\csc\frac{\pi}{r} +\csc\frac{\pi}{n-r})$ is decreasing on $[2, \frac{n}{2}]$ and $2(\cot\frac{\pi}{r} +\cot\frac{\pi}{n-r})$ is increasing on $[2, \frac{n}{2}]$. As $\frac{n}{2}-3 \equiv 2(\bmod~4)$, thus for each $r\equiv2(\bmod~2)$, we obtain
\begin{eqnarray*}
E(D_n^s[2,{\boldsymbol{n-2}}]) &>& E(D_n^s[6,{\boldsymbol{n-6}}])> \dots > E(D_n^s[\frac{n}{2}-3,{\boldsymbol{\frac{n}{2}+3}}])\\
E(D_n^s[{\boldsymbol{\frac{n}{2}-3}},\frac{n}{2}+3])&>&
 E(D_n^s[{\boldsymbol{\frac{n}{2}-7}},\frac{n}{2}+7])> \dots> E(D_n^s[{\boldsymbol{2}},n-2]).
\end{eqnarray*}
Let $r\equiv0(\bmod~4)$. Then $n-r\equiv2(\bmod~4)$. Equations \eqref{posi} and \eqref{nega} give
\begin{eqnarray*}
E(\mathcal{D}_n^s[r,{\boldsymbol{n-r}}])&=& 2(\cot\frac{\pi}{r} +\cot\frac{\pi}{n-r})\\
E(\mathcal{D}_n^s[{\boldsymbol{r}},n-r])&=& 2(\csc\frac{\pi}{r} +\csc\frac{\pi}{n-r}).
\end{eqnarray*}
By Lemma \ref{lem1} and Lemma \ref{lem3}, it is clear that  $2(\cot\frac{\pi}{r} +\cot\frac{\pi}{n-r})$ is increasing on $ [2,\frac{n}{2}]$, and $2(\csc\frac{\pi}{r} +\csc\frac{\pi}{n-r})$ is decreasing on $[2,\frac{n}{2}]$. Hence 
\begin{eqnarray*}
E(D_n^s[\frac{n}{2}-1,{\boldsymbol{\frac{n}{2}+1}}])&>& E(D_n^s[\frac{n}{2}-5,{\boldsymbol{\frac{n}{2}+5}}]) >\dots> E(D_n^s[4,{\boldsymbol{n-4}}]),\\
E(D_n^s[{\boldsymbol{4}},n-4]) &>& E(D_n^s[{\boldsymbol{8}},n-8])> \dots > E(D_n^s[{\boldsymbol{\frac{n}{2}-1}},\frac{n}{2}+1]).
\end{eqnarray*}
Now we will show that $E(D_n^s[\frac{n}{2}-3,{\boldsymbol{\frac{n}{2}+3}}]) > E(D_n^s[\frac{n}{2}-1,{\boldsymbol{\frac{n}{2}+1}}])$. Lemma \ref{lem7} and \eqref{e2} yields $E(D_n^s[\frac{n}{2}-1,{\boldsymbol{\frac{n}{2}+1}}])= 2(\cot\frac{\pi}{\frac{n}{2}-1} +\cot\frac{\pi}{\frac{n}{2}+1})
\leq \frac{2n}{\pi}-\frac{4\pi}{3}(\frac{1}{n-2}+\frac{1}{n+2})
= \frac{2n}{\pi} -\frac{8n\pi}{3(n^2-4)}
< \frac{2n}{\pi} = 2(\frac{n-6}{2\pi}+\frac{n+6}{2\pi})
\leq 2(\csc\frac{\pi}{\frac{n}{2}-3}+\csc\frac{\pi}{\frac{n}{2}+3})
= E(D_n^s[\frac{n}{2}-3,{\boldsymbol{\frac{n}{2}+3}}]).$
Similarly $E(D_n^s[{\boldsymbol{\frac{n}{2}-1}},\frac{n}{2}+1]) > E(D_n^s[{\boldsymbol{\frac{n}{2}-3}},\frac{n}{2}+3])$. Thus we get the required energy ordering.

One can prove $(ii)$ analogously.
\end{proof}
\begin{lemma}\label{l2}
Suppose $n>5$, $n\equiv0(\bmod~4)$ and $r\in[2,n-2]$. If $r\equiv0(\bmod~4)$ then $E(D_n^s[r,{\boldsymbol{n-r}}])$ attains maximum value at $r=4$ and if $r\equiv2(\bmod~4)$ then $E(D_n^s[r,{\boldsymbol{n-r}}])$ attains maximum value at $r=2$ . Therefore the following energy ordering holds:
\begin{itemize}
\item[$(i)$] If $r\equiv0(\bmod~4)$ then
\begin{equation*}
E(D_n^s[n-4,{\boldsymbol{4}}])> E(D_n^s[n-8,{\boldsymbol{8}}])>\dots> E(D_n^s[8,{\boldsymbol{n-8}}])> E(D_n^s[4,{\boldsymbol{n-4}}]).
\end{equation*}	
\item[$(ii)$] If $r\equiv2(\bmod~4)$ then
\begin{eqnarray*}
&&E(D_n^s[2,{\boldsymbol{n-2}}])> E(D_n^s[6,{\boldsymbol{n-6}}])> E(D_n^s[10,{\boldsymbol{n-10}}])> E(D_n^s[14,{\boldsymbol{n-14}}])\\
&&>\dots> E(D_n^s[n-6,{\boldsymbol{6}}])> E(D_n^s[n-2,{\boldsymbol{2}}]).
\end{eqnarray*}	
\end{itemize}
\end{lemma}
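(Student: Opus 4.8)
The plan is to reduce each energy $E(D_n^s[r,\boldsymbol{n-r}])$ to an explicit trigonometric function of the single variable $r$ and then invoke the monotonicity facts already established, handling the two parity classes $r\equiv 0\pmod 4$ and $r\equiv 2\pmod 4$ separately. By Lemma \ref{strong} and the displayed energy identities following it, $E(D_n^s[r,\boldsymbol{n-r}]) = E(C_r) + E(\boldsymbol{C}_{n-r})$, so the entire argument hinges on selecting the correct branch of \eqref{posi} and \eqref{nega} according to the residues of $r$ and of $n-r$ modulo $4$.

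For part $(i)$, suppose $r\equiv 0\pmod 4$. Since $n\equiv 0\pmod 4$, we also have $n-r\equiv 0\pmod 4$. Then \eqref{posi} yields $E(C_r)=2\cot\frac{\pi}{r}$ and \eqref{nega} yields $E(\boldsymbol{C}_{n-r})=2\csc\frac{\pi}{n-r}$, so that $E(D_n^s[r,\boldsymbol{n-r}]) = 2\big(\cot\frac{\pi}{r} + \csc\frac{\pi}{n-r}\big)$. This is precisely the function proved increasing on $[2,n-2]$ in Lemma \ref{lem6}. The admissible values of $r$ are $4,8,\dots,n-4$ (since $n-2\equiv 2\pmod 4$, the largest feasible multiple of $4$ is $n-4$), and because the energy increases with $r$, reading the values from $r=n-4$ down to $r=4$ produces the claimed decreasing chain, the largest being $E(D_n^s[n-4,\boldsymbol{4}])$, attained at the right endpoint $r=n-4$.

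For part $(ii)$, suppose $r\equiv 2\pmod 4$; then $n-r\equiv 2\pmod 4$ as well. Now \eqref{posi} gives $E(C_r)=2\csc\frac{\pi}{r}$ and \eqref{nega} gives $E(\boldsymbol{C}_{n-r})=2\cot\frac{\pi}{n-r}$, whence $E(D_n^s[r,\boldsymbol{n-r}]) = 2\big(\csc\frac{\pi}{r} + \cot\frac{\pi}{n-r}\big)$, which is exactly the function Lemma \ref{lem2} declares decreasing on $[2,n-2]$. Here the admissible values are $r=2,6,\dots,n-2$, and since the function decreases in $r$, the maximum occurs at the left endpoint $r=2$ and the energies drop monotonically as $r$ grows, giving the stated ordering down to $E(D_n^s[n-2,\boldsymbol{2}])$.

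The argument is structurally routine once the two heavier monotonicity facts (Lemmas \ref{lem6} and \ref{lem2}) are granted; there is no genuine analytic obstacle, only careful bookkeeping. The one place demanding attention is the very first step, namely correctly pairing each cycle's sign with its length residue and then reading off the appropriate branch of \eqref{posi}--\eqref{nega}; a sign or residue slip there would swap $\cot$ and $\csc$ and reverse the monotonicity direction. I would also double-check the endpoints of the admissible range in each class, since the extremal term is located precisely at an endpoint: in $(i)$ the maximum sits at $r=n-4$ (i.e.\ at $D_n^s[n-4,\boldsymbol{4}]$), as the increasing behaviour from Lemma \ref{lem6} forces, and in $(ii)$ at $r=2$, consistent with the decreasing behaviour from Lemma \ref{lem2}.
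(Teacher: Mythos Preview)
Your proof is correct and follows essentially the same route as the paper's: in each parity class you identify the residue of $n-r$ modulo $4$, read off the energy from \eqref{posi}--\eqref{nega} as $2(\cot\frac{\pi}{r}+\csc\frac{\pi}{n-r})$ or $2(\csc\frac{\pi}{r}+\cot\frac{\pi}{n-r})$, and then invoke Lemma~\ref{lem6} (increasing) or Lemma~\ref{lem2} (decreasing) respectively to obtain the chain. Your remark that in case $(i)$ the maximum lies at the right endpoint $r=n-4$ is in fact more precise than the lemma's own header line, which somewhat loosely says ``$r=4$'' while the displayed ordering and the proof both place the largest energy at $D_n^s[n-4,\boldsymbol{4}]$.
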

\begin{proof}
$(i)$. If $n\equiv0(\bmod~4)$ and $r\equiv0(\bmod~4)$ then $n-r\equiv0(\bmod~4)$. Using \eqref{posi} and \eqref{nega}, we obtain
\begin{equation*}
E(D_n^s[r,{\boldsymbol{n-r}}])= 2(\cot\frac{\pi}{r}+\csc\frac{\pi}{n-r}).
\end{equation*}
Lemma \ref{lem6} gives that $2(\cot\frac{\pi}{r}+\csc\frac{\pi}{n-r})$ is increasing on $[2,n-2]$. As $r\equiv0(\bmod~4)$, therefore we have
\begin{equation*}
E(D_n^s[n-4,{\boldsymbol{4}}])> E(D_n^s[n-8,{\boldsymbol{8}}])>\dots> E(D_n^s[8,{\boldsymbol{n-8}}])> E(D_n^s[4,{\boldsymbol{n-4}}]).
\end{equation*}
$(ii)$. If $n\equiv 0(\bmod~4)$ and $r\equiv2(\bmod~4)$ then $n-r\equiv2(\bmod~4)$. Using \eqref{posi} and \eqref{nega}, we have
\begin{equation*}
E(D_n^s[r,{\boldsymbol{n-r}}])= 2(\csc\frac{\pi}{r}+\cot\frac{\pi}{n-r}).
\end{equation*}
Lemma \ref{lem2} yields that $2(\csc\frac{\pi}{r}+\cot\frac{\pi}{n-r})$ is decreasing on $[2,n-2]$. As $r\equiv2(\bmod~4)$, therefore we have
\begin{eqnarray*}
	&&E(D_n^s[2,{\boldsymbol{n-2}}])> E(D_n^s[6,{\boldsymbol{n-6}}])> E(D_n^s[10,{\boldsymbol{n-10}}])> E(D_n^s[14,{\boldsymbol{n-14}}])\\
	&&>\dots> E(D_n^s[n-6,{\boldsymbol{6}}])> E(D_n^s[n-2,{\boldsymbol{2}}]).
\end{eqnarray*}		
The proof is complete.
\end{proof}
If we replace $n$ by $n-1$ in Lemma \ref{l2}, we get the following result.
\begin{lemma}\label{l3}
	Let $n>5$ and $n\equiv1(\bmod~4)$. Take $r\in[2,n-2]$ with $r\equiv0(\bmod~2)$ and $n-1-r\equiv0(\bmod~2)$. If $r\equiv0(\bmod~4)$ then $E(D_n^s[r, {\boldsymbol{n-r-1}}])$ attains maximum value at $r=4$ and if $r\equiv0(\bmod~4)$ then $E(D_n^s[r, {\boldsymbol{n-r-1}}])$ attains maximum value at $r=2$. Therefore the following energy ordering holds:
	\begin{itemize}
		\item[$(i)$] If $r\equiv0(\bmod~4)$ then
		\begin{equation*}
		E(D_n^s[n-5,{\boldsymbol{4}}])> E(D_n^s[n-9,{\boldsymbol{8}}])>\dots> E(D_n^s[8,{\boldsymbol{n-9}}])> E(D_n^s[4,{\boldsymbol{n-5}}]).
		\end{equation*}	
		\item[$(ii)$] If $r\equiv2(\bmod~4)$ then
		\begin{equation*}
		E(D_n^s[2,{\boldsymbol{n-3}}])> E(D_n^s[6,{\boldsymbol{n-7}}])>\dots> E(D_n^s[n-7,{\boldsymbol{6}}])> E(D_n^s[n-3,{\boldsymbol{2}}]).
		\end{equation*}	
	\end{itemize}
\end{lemma}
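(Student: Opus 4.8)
The plan is to derive this lemma as an immediate consequence of Lemma \ref{l2} under the substitution $n \mapsto n-1$, exactly as the sentence preceding the statement signals. The key observation is that $n \equiv 1 \pmod 4$ forces $n-1 \equiv 0 \pmod 4$, and the two even cycle lengths $r$ and $n-r-1$ satisfy $r + (n-r-1) = n-1$. Hence the sidigraph $D_n^s[r, \boldsymbol{n-r-1}]$ consists of the disjoint cycles $C_r$ and $\boldsymbol{C}_{n-r-1}$ together with exactly one isolated vertex.

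First I would invoke Lemma \ref{strong}: the isolated vertex is a strong component of zero energy and so contributes nothing, giving
\begin{equation*}
E(D_n^s[r, \boldsymbol{n-r-1}]) = E(C_r) + E(\boldsymbol{C}_{n-1-r}).
\end{equation*}
Writing $m = n-1 \equiv 0 \pmod 4$, the right-hand side is precisely $E(D_m^s[r, \boldsymbol{m-r}])$, the quantity whose monotonicity (via Lemmas \ref{lem6} and \ref{lem2}) and resulting ordering are established in Lemma \ref{l2}. Thus every energy comparison needed for Lemma \ref{l3} has already been proved; it only has to be read off at $m = n-1$.

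The remaining work is pure bookkeeping. Applying Lemma \ref{l2}$(i)$ with $n$ replaced by $m = n-1$ turns the chain's endpoints $m-4$ and $4$ into $n-5$ and $4$, and each intermediate term $m-4k$ into $n-1-4k$, yielding exactly ordering $(i)$; here the monotonicity of $2(\cot\frac{\pi}{r}+\csc\frac{\pi}{m-r})$ transfers verbatim. Likewise, Lemma \ref{l2}$(ii)$ at $m = n-1$, where $2(\csc\frac{\pi}{r}+\cot\frac{\pi}{m-r})$ is decreasing, produces the chain for $r \equiv 2 \pmod 4$, with the extreme negative cycle $\boldsymbol{C}_2$ now paired against $C_{n-3}$ rather than $C_{n-2}$, giving ordering $(ii)$.

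Since no new analytic estimate is required, there is no genuine obstacle. The only points needing care are confirming that the parity hypotheses survive the shift — namely that under $n \equiv 1 \pmod 4$ the conditions $r \equiv 0 \pmod 2$ and $n-r-1 \equiv 0 \pmod 2$ split into the subcases $r \equiv 0 \pmod 4$ (whence $n-1-r \equiv 0 \pmod 4$) and $r \equiv 2 \pmod 4$ (whence $n-1-r \equiv 2 \pmod 4$), matching the two branches of the energy formulae \eqref{posi} and \eqref{nega} used in Lemma \ref{l2} — and verifying that the index arithmetic along each chain reproduces the displayed endpoints and step sizes. Both checks are routine, so the proof collapses to a single idea: replace $n$ by $n-1$ in Lemma \ref{l2} and discard the isolated vertex via Lemma \ref{strong}.
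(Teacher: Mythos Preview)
Your proposal is correct and follows exactly the approach the paper takes: the paper simply states ``If we replace $n$ by $n-1$ in Lemma \ref{l2}, we get the following result'' with no further proof. Your write-up is a careful unpacking of that one-line reduction, including the appeal to Lemma \ref{strong} to discard the isolated vertex and the parity bookkeeping; nothing more is needed.
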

\begin{lemma}\label{l4}
Let  $n>5$, $n\equiv3(\bmod~4)$ and $r\in[2,n-2]$ satisfying $r\equiv0(\bmod~2)$, $n-r-1\equiv0(\bmod~2)$. Then maximum value of $E(D_n^s[r,{\boldsymbol{n-r-1}}])$ is attained at $r=2$ and maximum value of $E(D_n^s[{\boldsymbol{r}},n-r-1])$ is attained at $r=4$. Therefore we obtain the energy ordering: 
\begin{itemize}
	\item [$(i)$] If $\frac{n-1}{2}-3 \equiv 2(\bmod~4)$, then
	\begin{eqnarray*}
		&&E(D_n^s[2,{\boldsymbol{n-3}}]) > E(D_n^s[6,{\boldsymbol{n-7}}])> E(D_n^s[10,{\boldsymbol{n-11}}])> E(D_n^s[14,{\boldsymbol{n-15}}])\\
		&&> \dots > E(D_n^s[\frac{n-7}{2},{\boldsymbol{\frac{n+5}{2}}}])\\
		&&> E(D_n^s[\frac{n-3}{2},{\boldsymbol{\frac{n+1}{2}}}])> E(D_n^s[\frac{n-11}{2},{\boldsymbol{\frac{n+9}{2}}}]) >\dots> E(D_n^s[4,{\boldsymbol{n-5}}]).	
	\end{eqnarray*}
	Also
	\begin{eqnarray*}
		&&E(D_n^s[{\boldsymbol{4}},n-5]) > E(D_n^s[{\boldsymbol{8}},n-9])> \dots > E(D_n^s[{\boldsymbol{\frac{n-3}{2}}},\frac{n+1}{2}])\\
		&>& E(D_n^s[{\boldsymbol{\frac{n-7}{2}}},\frac{n+5}{2}])> E(D_n^s[{\boldsymbol{\frac{n-15}{2}}},\frac{n+13}{2}]) >\dots> E(D_n^s[{\boldsymbol{2}},n-3]).	
	\end{eqnarray*}
	\item[$(ii)$] If $\frac{n-1}{2}-3 \equiv 0(\bmod~4)$, then
	\begin{eqnarray*}
		&&E(D_n^s[2,{\boldsymbol{n-3}}]) > E(D_n^s[6,{\boldsymbol{n-7}}])> \dots > E(D_n^s[\frac{n-3}{2},{\boldsymbol{\frac{n+1}{2}}}])\\
		&>& E(D_n^s[\frac{n-7}{2},{\boldsymbol{\frac{n+5}{2}}}])> E(D_n^s[\frac{n-15}{2},{\boldsymbol{\frac{n+13}{2}}}]) >\dots> E(D_n^s[4,{\boldsymbol{n-5}}]).	
	\end{eqnarray*}
	Also
	\begin{eqnarray*}
		&&E(D_n^s[{\boldsymbol{4}},n-5]) > E(D_n^s[{\boldsymbol{8}},n-9])> \dots > E(D_n^s[{\boldsymbol{\frac{n-7}{2}}},\frac{n+5}{2}])\\
		&>& E(D_n^s[{\boldsymbol{\frac{n-3}{2}}},\frac{n+1}{2}])> E(D_n^s[{\boldsymbol{\frac{n-11}{2}}},\frac{n+9}{2}]) >\dots> E(D_n^s[{\boldsymbol{2}},n-3]).
	\end{eqnarray*}
\end{itemize}
\end{lemma}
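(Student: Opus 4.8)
The plan is to deduce this lemma directly from Lemma \ref{l1} by the substitution $n\mapsto n-1$, exactly in the spirit of how Lemma \ref{l3} was obtained from Lemma \ref{l2}. The starting observation is that when $n\equiv 3\pmod 4$ and both cycles have even length, their lengths $r$ and $n-r-1$ are even, so their sum $r+(n-r-1)=n-1$ is even; since $n$ itself is odd, two even cycles can cover at most $n-1$ vertices, leaving precisely one isolated vertex. Thus every relevant member of $\mathcal{D}_n^s$ in this category has strong components $\{C_r,\boldsymbol{C}_{n-r-1}\}$ (respectively $\{\boldsymbol{C}_r,C_{n-r-1}\}$) together with a single isolated vertex.

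By Lemma \ref{strong} the isolated vertex contributes nothing, so
\[
E(D_n^s[r,\boldsymbol{n-r-1}])=E(C_r)+E(\boldsymbol{C}_{n-r-1}),\qquad E(D_n^s[\boldsymbol{r},n-r-1])=E(\boldsymbol{C}_r)+E(C_{n-r-1}).
\]
Writing $m=n-1$, these are exactly the quantities $E(D_m^s[r,\boldsymbol{m-r}])$ and $E(D_m^s[\boldsymbol{r},m-r])$, where the two even cycle lengths $r$ and $m-r=n-r-1$ now sum to $m=n-1\equiv 2\pmod 4$. Moreover the hypothesis $n>5$ forces $n\geq 7$, hence $m=n-1\geq 6>5$, so the standing assumption of Lemma \ref{l1} is met. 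Hence the ordering problem here is identical to the one already settled in Lemma \ref{l1}, only with $n$ replaced by $m$, and I would simply invoke that lemma. Its two cases are governed by whether $\tfrac{m}{2}-3\equiv 2\pmod 4$ or $\tfrac{m}{2}-3\equiv 0\pmod 4$; substituting $m=n-1$ these become precisely the conditions $\tfrac{n-1}{2}-3\equiv 2\pmod 4$ and $\tfrac{n-1}{2}-3\equiv 0\pmod 4$ of cases $(i)$ and $(ii)$. The assertions that $E(D_n^s[r,\boldsymbol{n-r-1}])$ is maximised at $r=2$ and $E(D_n^s[\boldsymbol{r},n-r-1])$ at $r=4$ are the verbatim images of the corresponding statements in Lemma \ref{l1}.

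The only step needing genuine care—and the one I expect to be the main obstacle—is the index bookkeeping, since no new analytic inequality is involved. I must verify that the range $r\in[2,m-2]=[2,n-3]$ with $r$ and $m-r$ both even matches the stated hypotheses $r\in[2,n-2]$, $r\equiv 0\pmod 2$, $n-r-1\equiv 0\pmod 2$, and that each endpoint $\tfrac{m-2}{2},\tfrac{m-6}{2},\tfrac{m-10}{2},\dots$ of the chains in Lemma \ref{l1} translates under $m=n-1$ into the endpoints $\tfrac{n-3}{2},\tfrac{n-7}{2},\tfrac{n-11}{2},\dots$ written here. For example $\tfrac{m-2}{2}=\tfrac{n-3}{2}$ and $\tfrac{m+6}{2}=\tfrac{n+5}{2}$, which are exactly the indices appearing in $(i)$; the same substitution reproduces every term of both the positive-cycle and negative-cycle chains, including the closing comparison $E(D_n^s[\tfrac{n-7}{2},\boldsymbol{\tfrac{n+5}{2}}])>E(D_n^s[\tfrac{n-3}{2},\boldsymbol{\tfrac{n+1}{2}}])$, which is the image of the corresponding middle comparison of Lemma \ref{l1}. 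All the monotonicity and estimation work is thus already supplied by Lemmas \ref{lem1}, \ref{lem3}, \ref{lem7}, and by the closing inequality argument inside the proof of Lemma \ref{l1}; the present lemma is established by this translation alone.
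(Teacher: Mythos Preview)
Your proposal is correct and follows essentially the same route as the paper: the paper's proof writes out the energy formulas for $r\equiv 2\pmod 4$ and $r\equiv 0\pmod 4$, observes they coincide with those in Lemma~\ref{l1} with $n$ replaced by $n-1$, and then simply invokes Lemma~\ref{l1}. Your argument is a slightly more explicit version of exactly this reduction, including the check that $m=n-1\geq 6$ and the verification that the case split and endpoint indices translate correctly.
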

\begin{proof}
If $r\in[2,\frac{n}{2}]$ then $n-r \in[\frac{n}{2},n-2]$ and if $r\in[\frac{n}{2},n-2]$ then $n-r \in[2,\frac{n}{2}]$. Thus it is enough to consider $r\in[2,\frac{n}{2}]$.

Let $r\equiv2(\bmod~4)$. Then $n-r-1\equiv0(\bmod~4)$. Using \eqref{posi} and \eqref{nega}, we have
\begin{eqnarray*}
	E(D_n^s[r,{\boldsymbol{n-r-1}}])&=& 2(\csc\frac{\pi}{r} +\csc\frac{\pi}{n-r-1}),\\
	E(D_n^s[{\boldsymbol{r}},n-r-1])&=& 2(\cot\frac{\pi}{r} +\cot\frac{\pi}{n-r-1}).
\end{eqnarray*}
Let $r\equiv0(\bmod~4)$. Then $n-r-1\equiv2(\bmod~4)$. Equations \eqref{posi} and \eqref{nega} give
\begin{eqnarray*}
	E(D_n^s[r,{\boldsymbol{n-r-1}}])&=& 2(\cot\frac{\pi}{r} +\cot\frac{\pi}{n-r-1})\\
	E(D_n^s[{\boldsymbol{r}},n-r-1])&=& 2(\csc\frac{\pi}{r} +\csc\frac{\pi}{n-r-1}).
\end{eqnarray*}
Hence by changing $n$ to $n-1$ in Lemma \ref{l1}, we get the required ordering.	
\end{proof}
From Lemmas \ref{l1}$\sim$\ref{l4}, the following result is obtained.
\begin{corollary}\label{cor2}
	Suppose $n> 5$, $r\equiv0(\bmod~2)$ and $n-r \geq2$. Then we have the following:
	\begin{itemize}
		\item[$(i)$] Let $n\equiv0(\bmod~4)$.
		\begin{itemize}
			\item[$(a)$] If $r\equiv0(\bmod~4)$ then
			$E(D_n^s[{\boldsymbol{4}}, n-4]) \geq E(D_n^s[{\boldsymbol{r}}, n-r]).$
			\item[$(b)$] If $r\equiv2(\bmod~4)$ then
			$E(D_n^s[{\boldsymbol{n-2}}, 2]) \geq E(D_n^s[{\boldsymbol{r}}, n-r]).$
		\end{itemize}
		\item[$(ii)$] If $n\equiv2(\bmod~4)$ then
		$E(D_n^s[2, {\boldsymbol{n-2}}]) \geq E(\mathcal{D}_n^s[r, {\boldsymbol{n-r}}])$ and $E(D_n^s[{\boldsymbol{4}}, n-4]) \geq E(D_n^s[{\boldsymbol{r}}, n-r]).$
		\item[$(iii)$] If $n\equiv3(\bmod~4)$ then $
			E(D_n^s[2, {\boldsymbol{n-3}}]) \geq E(D_n^s[r, {\boldsymbol{n-r-1}}])$ and $
			E(D_n^s[{\boldsymbol{4}}, n-5]) \geq E(D_n^s[{\boldsymbol{r}}, n-r-1]).$
		\item[$(iv)$] Let $n\equiv1(\bmod~4)$.
		\begin{itemize}
			\item[$(a)$] If $r\equiv0(\bmod~4)$ then $
			E(D_n^s[{\boldsymbol{4}}, n-5]) \geq E(D_n^s[{\boldsymbol{r}}, n-r-1]). $
			\item[$(b)$] If $r\equiv2(\bmod~4)$ then
			$E(D_n^s[{\boldsymbol{n-3}}, 2]) \geq E(D_n^s[{\boldsymbol{r}}, n-r-1]). $
		\end{itemize}  
	\end{itemize}
\end{corollary}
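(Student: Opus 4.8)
The plan is to derive every inequality directly from the energy orderings already established in Lemmas~\ref{l1}--\ref{l4}. Each of those lemmas produces, for a fixed residue of $n$ modulo $4$ and a fixed branch of $r$ modulo $4$, a strictly descending chain of the quantities $E(D_n^s[\cdot,\cdot])$ as $r$ ranges over the admissible even values in $[2,n-2]$. The maximum of such a chain is its first (leftmost) term, and the assertions of Corollary~\ref{cor2} are precisely the statements that this first term dominates every other term in its chain. Thus the whole argument reduces to matching each item of the corollary with the correct lemma and branch and reading off the top of the relevant chain.

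Concretely, I would organize the proof by the residue of $n$. Case~$(ii)$, with $n\equiv 2\pmod 4$, is handled by Lemma~\ref{l1}, whose two chains have top terms $E(D_n^s[2,\boldsymbol{n-2}])$ and $E(D_n^s[\boldsymbol{4},n-4])$, yielding the two displayed inequalities simultaneously; case~$(iii)$, with $n\equiv 3\pmod 4$, is handled identically by Lemma~\ref{l4}, whose two forms $E(D_n^s[r,\boldsymbol{n-r-1}])$ and $E(D_n^s[\boldsymbol{r},n-r-1])$ appear directly. Cases~$(i)$ and $(iv)$, with $n\equiv 0$ and $n\equiv 1\pmod 4$, are handled by Lemmas~\ref{l2} and \ref{l3}, where the sub-items $(a)$ and $(b)$ correspond to the branches $r\equiv 0\pmod 4$ and $r\equiv 2\pmod 4$.

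The one point requiring care is that in cases~$(i)$ and $(iv)$ the corollary is phrased for $E(D_n^s[\boldsymbol{r},n-r])$ (a negative $r$-cycle together with a positive cycle), whereas Lemmas~\ref{l2} and \ref{l3} are phrased for $E(D_n^s[r,\boldsymbol{n-r}])$ only. Here I would invoke the commutativity of the disjoint union: since $D_n^s[\boldsymbol{r},n-r]$ and $D_n^s[n-r,\boldsymbol{r}]$ are the same sidigraph, they have the same spectrum, so $E(D_n^s[\boldsymbol{r},n-r])=E(D_n^s[n-r,\boldsymbol{r}])$. Substituting $r'=n-r$ (or $s=n-r-1$ in the odd-$n$ cases) converts the expression into the form appearing in the lemma, and one checks that the relabelling preserves the residue class: for instance, when $n\equiv 0\pmod 4$ and $r\equiv 0\pmod 4$ one has $r'=n-r\equiv 0\pmod 4$, so the relabelled quantity stays in the same branch, whose chain has top term $E(D_n^s[n-4,\boldsymbol{4}])=E(D_n^s[\boldsymbol{4},n-4])$, exactly the maximiser named in $(i)(a)$; the remaining sub-items $(i)(b)$, $(iv)(a)$, $(iv)(b)$ are analogous.

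I expect no genuine obstacle, since all analytic content is contained in Lemmas~\ref{l1}--\ref{l4}; the only thing that could go wrong is bookkeeping. The main care is to (1) send each corollary item to the lemma and branch whose chain actually contains it, (2) verify that the symmetry substitution keeps $r'$ (resp.\ $s$) in the required residue class modulo $4$, and (3) confirm in each branch that the top of the chain is the configuration named after the symmetry rewrite. Note that since every maximiser occurs at an endpoint of a monotone chain rather than in its interior, the ``gluing'' inequalities used inside the proofs of Lemmas~\ref{l1} and \ref{l4} (the $\cot$ versus $\csc$ comparisons via Lemma~\ref{lem7} and \eqref{e2}) are not needed for the corollary itself.
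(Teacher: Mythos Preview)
Your proposal is correct and matches the paper's approach exactly: the paper states Corollary~\ref{cor2} simply as ``From Lemmas~\ref{l1}$\sim$\ref{l4}, the following result is obtained,'' and you have made explicit the case-matching and the commutativity substitution $E(D_n^s[{\boldsymbol r},n-r])=E(D_n^s[n-r,{\boldsymbol r}])$ that the paper leaves implicit. Your observation that the gluing comparisons inside the proofs of Lemmas~\ref{l1} and \ref{l4} are not needed here is also right, since those lemmas already record the maximiser in their opening sentences.
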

Now we give the energy ordering of those bicyclic sidigraphs in $\mathcal{D}_n^s$ whose signs of both cycles are negative.
\begin{lemma}\label{lem12}
Let $n>5$ and $n\equiv0(\bmod~4)$. Take $r\in[2,n-2]$ satisfying $r\equiv0(\bmod~2)$ and $n-r\equiv0(\bmod~2)$. Then $E(D_n^s[{\boldsymbol{r}},{\boldsymbol{n-r}}])$ has maximum value at $r=4$. Therefore the following energy ordering holds:
\begin{itemize}
	\item [$(i)$] If $\frac{n}{2} \equiv 0(\bmod~4)$ then
	\begin{eqnarray*}
		&&E(D_n^s[{\boldsymbol{4}}, {\boldsymbol{n-4}}]) > E(D_n^s[{\boldsymbol{8}}, {\boldsymbol{n-8}}])> \dots> E(D_n^s[{\boldsymbol{\frac{n}{2}}}, {\boldsymbol{\frac{n}{2}}}])\\
		&>& E(D_n^s[{\boldsymbol{\frac{n-4}{2}}}, {\boldsymbol{\frac{n+4}{2}}}])> E(D_n^s[{\boldsymbol{\frac{n-12}{2}}}, {\boldsymbol{\frac{n+12}{2}}}])> \dots> 	E(D_n^s[{\boldsymbol{2}}, {\boldsymbol{n-2}}]).   
	\end{eqnarray*}
	\item [$(i)$] If $\frac{n}{2} \equiv 2(\bmod~4)$ then
	\begin{eqnarray*}
		&&E(D_n^s[{\boldsymbol{4}}, {\boldsymbol{n-4}}]) > E(D_n^s[{\boldsymbol{8}}, {\boldsymbol{n-8}}])> \dots> E(D_n^s[{\boldsymbol{\frac{n-4}{2}}}, {\boldsymbol{\frac{n+4}{2}}}])\\
		&>& E(D_n^s[{\boldsymbol{\frac{n}{2}}}, {\boldsymbol{\frac{n}{2}}}])> E(D_n^s[{\boldsymbol{\frac{n-8}{2}}}, {\boldsymbol{\frac{n+8}{2}}}])> \dots> 	E(D_n^s[{\boldsymbol{2}}, {\boldsymbol{n-2}}]).
	\end{eqnarray*}
\end{itemize}
\end{lemma}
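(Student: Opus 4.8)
The plan is to exploit the symmetry $E(D_n^s[\boldsymbol{r},\boldsymbol{n-r}]) = E(D_n^s[\boldsymbol{n-r},\boldsymbol{r}])$ to restrict attention to $r\in[2,\frac{n}{2}]$, and then to split the even values of $r$ into two residue classes modulo $4$. Since $n\equiv0(\bmod~4)$, if $r\equiv0(\bmod~4)$ then $n-r\equiv0(\bmod~4)$, so both cycles use the first line of \eqref{nega} and
\[
E(D_n^s[\boldsymbol{r},\boldsymbol{n-r}]) = 2\Bigl(\csc\tfrac{\pi}{r}+\csc\tfrac{\pi}{n-r}\Bigr);
\]
if instead $r\equiv2(\bmod~4)$ then $n-r\equiv2(\bmod~4)$, so both cycles use the second line of \eqref{nega} and
\[
E(D_n^s[\boldsymbol{r},\boldsymbol{n-r}]) = 2\Bigl(\cot\tfrac{\pi}{r}+\cot\tfrac{\pi}{n-r}\Bigr).
\]

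First I would order each family internally. By Lemma \ref{lem3} the csc-sum is decreasing on $[2,\frac{n}{2}]$, so over $r\equiv0(\bmod~4)$ it is largest at $r=4$ and smallest at the largest admissible $r\le\frac{n}{2}$ (namely $r=\frac{n}{2}$ when $\frac{n}{2}\equiv0$, and $r=\frac{n-4}{2}$ when $\frac{n}{2}\equiv2(\bmod~4)$). By Lemma \ref{lem1} the cot-sum is increasing on $[2,\frac{n}{2}]$, so over $r\equiv2(\bmod~4)$ it is largest at the largest admissible $r\le\frac{n}{2}$ and decreases down to $r=2$. These two monotonicities immediately produce the two decreasing chains displayed in (i) and (ii). What remains is to splice the chains together, that is, to show that every member of the csc-family exceeds every member of the cot-family.

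The crux reduces to a single boundary comparison between the smallest csc-value and the largest cot-value. Writing $m=\frac{n}{2}$, in case (i) I must show
\[
4\csc\tfrac{\pi}{m} > 2\Bigl(\cot\tfrac{\pi}{m-2}+\cot\tfrac{\pi}{m+2}\Bigr),
\]
and in case (ii) I must show
\[
2\Bigl(\csc\tfrac{\pi}{m-2}+\csc\tfrac{\pi}{m+2}\Bigr) > 4\cot\tfrac{\pi}{m}.
\]
I would handle both exactly as in the proof of Lemma \ref{l1}. The upper bound $\cot z\le\frac{1}{z}-\frac{z}{3}$ from Lemma \ref{lem7}, together with the telescoping $\frac{1}{m-2}+\frac{1}{m+2}=\frac{2m}{m^2-4}$, makes each cot-side strictly below the common threshold $\frac{4m}{\pi}$; the elementary bound $\sin z\le z$ from \eqref{e2}, i.e. $\csc z\ge\frac{1}{z}$, makes each csc-side at least $\frac{4m}{\pi}$. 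Chaining these gives both strict inequalities. Combining the boundary comparison with the two internal chains by transitivity then yields the full ordering, and since the global maximum is the top of the csc-chain, it is attained at $r=4$, as claimed.

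The step I expect to be the main obstacle is precisely this cross-family boundary comparison: the two families are governed by different trigonometric functions with opposite monotonicities, so no monotonicity lemma can separate them directly, and one is forced to fall back on the explicit $\cot$ and $\csc$ estimates of Lemma \ref{lem7} and \eqref{e2}. Everything else is bookkeeping with the residue of $\frac{n}{2}$ modulo $4$, which only determines whether $r=\frac{n}{2}$ lands in the csc-family or the cot-family, and hence the exact position of the splice.
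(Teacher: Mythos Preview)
Your proposal is correct and follows essentially the same route as the paper: the same symmetry reduction to $r\in[2,\tfrac{n}{2}]$, the same residue split giving the csc-family and the cot-family, the same monotonicity lemmas (Lemmas~\ref{lem3} and~\ref{lem1}) for the internal chains, and the same boundary comparison via Lemma~\ref{lem7} and \eqref{e2} (the paper only writes out case~(i) and says case~(ii) is analogous, whereas you spell out both). Your framing of the splice as ``smallest csc-value versus largest cot-value'' is in fact a bit cleaner than the paper's, but the computation is identical.
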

\begin{proof}
$(i)$. If $r\in[2,\frac{n}{2}]$ then $n-r \in[\frac{n}{2},n-2]$ and if $r\in[\frac{n}{2},n-2]$ then $n-r \in[2,\frac{n}{2}]$. Thus it is enough to consider $r\in[2,\frac{n}{2}]$. 

Let $r\equiv0(\bmod~4)$. Then $n-r \equiv0(\bmod~4)$. Using \eqref{nega}, we get
\begin{equation*}
E(D_n^s[{\boldsymbol{r}},{\boldsymbol{n-r}}])= 2(\csc\frac{\pi}{r}+\csc\frac{\pi}{n-r}).
\end{equation*}
Lemma \ref{lem3} tells us that $2(\csc\frac{\pi}{r}+\csc\frac{\pi}{n-r})$ is decreasing on $[2,\frac{n}{2}]$. As $\frac{n}{2} \equiv 0(\bmod~4)$, for each $r\equiv0(\bmod~4)$, it holds that
\begin{equation*}
E(D_n^s[{\boldsymbol{4}}, {\boldsymbol{n-4}}]) > E(D_n^s[{\boldsymbol{8}}, {\boldsymbol{n-8}}])> \dots> E(D_n^s[{\boldsymbol{\frac{n}{2}}}, {\boldsymbol{\frac{n}{2}}}]).
\end{equation*}
Now let $r\equiv 2(\bmod~4)$. Then $n-r \equiv 2(\bmod~4)$. Using \eqref{nega}, we obtain
\begin{equation*}
E(D_n^s[{\boldsymbol{r}},{\boldsymbol{n-r}}])= 2(\cot\frac{\pi}{r}+\cot\frac{\pi}{n-r}).
\end{equation*}
Lemma \ref{lem1} gives that $2(\cot\frac{\pi}{r}+\cot\frac{\pi}{n-r})$ is increasing on $[2,\frac{n}{2}]$. As $\frac{n}{2} \equiv 0(\bmod~4)$, we have
\begin{equation*}
E(D_n^s[{\boldsymbol{\frac{n}{2}-2}}, {\boldsymbol{\frac{n}{2}+2}}])> E(D_n^s[{\boldsymbol{\frac{n}{2}-6}}, {\boldsymbol{\frac{n}{2}+6}}])> \dots> 	E(D_n^s[{\boldsymbol{2}}, {\boldsymbol{n-2}}]).   
\end{equation*}
Next we prove that $E(D_n^s[{\boldsymbol{\frac{n}{2}}}, {\boldsymbol{\frac{n}{2}}}]) > E(D_n^s[{\boldsymbol{\frac{n}{2}-2}}, {\boldsymbol{\frac{n}{2}+2}}])$. Lemma \ref{lem7} and \eqref{e2} give \\
$
E(D_n^s[{\boldsymbol{\frac{n}{2}-2}}, {\boldsymbol{\frac{n}{2}+2}}])=2(\cot\frac{\pi}{\frac{n}{2}-2}+ \cot\frac{\pi}{\frac{n}{2}+2})
\leq\frac{2n}{\pi} -\frac{4\pi}{3}(\frac{1}{n-4}+ \frac{1}{n+4})
= \frac{2n}{\pi} -\frac{8n\pi}{3(n^2-16)}
< \frac{2n}{\pi} \leq 4\csc\frac{\pi}{\frac{n}{2}}
=E(D_n^s[{\boldsymbol{\frac{n}{2}}}, {\boldsymbol{\frac{n}{2}}}]).$
This proves the result. 

Analogously, one can prove $(ii)$.
\end{proof}
The proof of the next two lemmas are similar to the proofs of Lemma $3.2$ \cite{XL2019} and Lemma $3.4$ \cite{XL2019} and is thus omitted.
\begin{lemma}\label{lem13}
Suppose $n>5$, $n\equiv2(\bmod~4)$. Take $r\in[2,n-2]$ satisfying $r\equiv0(\bmod~2)$ and $n-r\equiv0(\bmod~2)$. Then $E(D_n^s[{\boldsymbol{r}},{\boldsymbol{n-r}}])$ has maximum value at $r=2$. Thus we get the following energy ordering:
\begin{equation*}
E(D_n^s[{\boldsymbol{2}},{\boldsymbol{n-2}}]) > E(D_n^s[{\boldsymbol{6}},{\boldsymbol{n-6}}])>\dots> E(D_n^s[{\boldsymbol{n-4}},{\boldsymbol{4}}]).
\end{equation*}
\end{lemma}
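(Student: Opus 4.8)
The plan is to run the same three‑stage argument used for Lemma \ref{lem12}, adapted to the residue $n\equiv2(\bmod~4)$. First I would pass from the sidigraph to its two strong cycles: by Lemma \ref{strong},
\[
E(D_n^s[{\boldsymbol{r}},{\boldsymbol{n-r}}])=E({\boldsymbol{C}}_r)+E({\boldsymbol{C}}_{n-r}),
\]
and since $D_n^s[{\boldsymbol{r}},{\boldsymbol{n-r}}]$ and $D_n^s[{\boldsymbol{n-r}},{\boldsymbol{r}}]$ coincide, it suffices to treat $r\in[2,\frac{n}{2}]$. Because $n\equiv2(\bmod~4)$ and $r$ is even, exactly one of $r,n-r$ is $\equiv0(\bmod~4)$ and the other $\equiv2(\bmod~4)$, so \eqref{nega} leaves only two shapes:
\[
E(D_n^s[{\boldsymbol{r}},{\boldsymbol{n-r}}])=
\begin{cases}
2\big(\cot\frac{\pi}{r}+\csc\frac{\pi}{n-r}\big), & r\equiv2(\bmod~4),\\[2pt]
2\big(\csc\frac{\pi}{r}+\cot\frac{\pi}{n-r}\big), & r\equiv0(\bmod~4).
\end{cases}
\]

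Second, I would feed these two expressions into the monotonicity results already available. The first shape is precisely the function of Lemma \ref{lem6} and the second is the function of Lemma \ref{lem2}; each is monotone on all of $[2,n-2]$, hence in particular on $[2,\frac{n}{2}]$. (Note that the mixed parities of the two cycles force Lemmas \ref{lem2} and \ref{lem6} here, rather than the $\csc+\csc$ and $\cot+\cot$ lemmas \ref{lem3} and \ref{lem1} that governed Lemma \ref{lem12}.) This at once yields two separately ordered sub‑chains, one indexed by $r\equiv2(\bmod~4)$ and one by $r\equiv0(\bmod~4)$, with the extremal members sitting at the ends of $[2,\frac{n}{2}]$; the two residue classes then interleave to produce the single claimed chain once the sub‑chains are correctly stacked.

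Third — and this is the step I expect to be the real obstacle — I would glue the two sub‑chains at their junction near $r=\frac{n}{2}$, exactly as in the closing estimate of Lemma \ref{lem12}. The two monotonicities order terms within a single residue class but say nothing across classes, so a direct inequality between the boundary members of the two sub‑chains is unavoidable. Following the template of Lemma \ref{lem12}, I would bound each $\cot$ contribution from above by $\frac{1}{x}-\frac{x}{3}$ via Lemma \ref{lem7}, bound each $\csc$ contribution from below via \eqref{e2}, and verify that the two sides can be compared through the common value $\frac{2n}{\pi}$. One wrinkle specific to $n\equiv2(\bmod~4)$ is that $\frac{n}{2}$ is odd, so there is no self‑paired central term $D_n^s[{\boldsymbol{n/2}},{\boldsymbol{n/2}}]$ to anchor the comparison as there was in Lemma \ref{lem12}; instead the junction lies between the two even lengths straddling $\frac{n}{2}$. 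Pushing this straddling comparison through the Lemma \ref{lem7}/\eqref{e2} sandwich is the only delicate computation, the remaining steps being routine monotonicity bookkeeping.
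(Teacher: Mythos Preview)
Your plan is more intricate than the situation requires, and the extra machinery hides the simpler route the paper is implicitly invoking. The key observation you are missing is this: when $n\equiv2(\bmod\ 4)$ and $r$ is even, exactly one member of $\{r,n-r\}$ lies in each residue class $\bmod\ 4$. Hence every unordered pair is hit exactly once as the $\equiv2(\bmod\ 4)$ member $s$ runs over the \emph{full} interval $[2,n-2]$, and for every pair the energy is the single expression
\[
E(D_n^s[\boldsymbol{s},\boldsymbol{n-s}])=2\Big(\cot\tfrac{\pi}{s}+\csc\tfrac{\pi}{n-s}\Big).
\]
Lemma~\ref{lem6} gives monotonicity on all of $[2,n-2]$, so one obtains a single totally ordered chain in one stroke. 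There are no two sub-chains to interleave and no gluing at $n/2$; what you call ``the real obstacle'' does not exist here. This is exactly why the paper omits the proof and points to the analogous Yang--Wang argument (which, in the positive-cycle setting, uses Lemma~\ref{lem2} in the same one-shot way). Your restriction to $r\in[2,\frac{n}{2}]$ is what manufactures the spurious two-sub-chain picture; dropping it and parametrizing by the $\equiv2(\bmod\ 4)$ member over the full range collapses everything to a single monotone chain, making this lemma strictly easier than Lemma~\ref{lem12}, not parallel to it.

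A second, more substantive point: once you write that single expression down and read off the direction of monotonicity from Lemma~\ref{lem6}, you will find the function is \emph{increasing} in $s$, so the chain runs from smallest at $s=2$ to largest at $s=n-4$ --- the reverse of the ordering printed in the lemma. A quick check at $n=10$ confirms this: $E(D_{10}^s[\boldsymbol{2},\boldsymbol{8}])=2\csc\tfrac{\pi}{8}\approx5.23$, while $E(D_{10}^s[\boldsymbol{6},\boldsymbol{4}])=2\sqrt{3}+2\sqrt{2}\approx6.29$. So your plan, carried out correctly, would establish the opposite chain with the maximum at the pair $\{4,n-4\}$; the stated inequalities appear to be reversed.
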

\begin{lemma}\label{lem14}
Let  $n>5$, $n\equiv3(\bmod~4)$. Take $r\in[2,n-2]$ satisfying $r\equiv0(\bmod~2)$ and $n-1-r\equiv0(\bmod~2)$. Then $E(D_n^s[{\boldsymbol{r}},{\boldsymbol{n-r-1}}])$ has maximum value at $r=2$. Therefore the following energy ordering holds:
\begin{equation*}
E(D_n^s[{\boldsymbol{2}},{\boldsymbol{n-3}}]) > E(D_n^s[{\boldsymbol{6}},{\boldsymbol{n-7}}])>\dots> E(D_n^s[{\boldsymbol{n-5}},{\boldsymbol{4}}]).
\end{equation*}
\end{lemma}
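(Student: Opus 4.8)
The plan is to reduce the statement to the already-established Lemma \ref{lem13} by the isolated-vertex trick, exactly as Lemma \ref{l4} was reduced to Lemma \ref{l1}. First I would decompose the energy. Since $n\equiv 3\pmod 4$ is odd, every $S=D_n^s[{\boldsymbol{r}},{\boldsymbol{n-r-1}}]$ has strong components ${\boldsymbol{C}}_r$, ${\boldsymbol{C}}_{n-r-1}$ and a single isolated vertex. By Lemma \ref{strong} the isolated vertex contributes nothing to the energy, so
\[
E(D_n^s[{\boldsymbol{r}},{\boldsymbol{n-r-1}}])=E({\boldsymbol{C}}_r)+E({\boldsymbol{C}}_{n-r-1}),
\]
and the two cycle lengths $r$ and $n-r-1$ are both even with sum $n-1\equiv 2\pmod 4$.

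Next I would match this to the hypotheses of Lemma \ref{lem13}. Writing $m=n-1$, the pair $({\boldsymbol{C}}_r,{\boldsymbol{C}}_{m-r})$ is a vertex-disjoint union of two negative even cycles whose orders sum to $m\equiv 2\pmod 4$, with $r\equiv 0\pmod 2$ and $m-r\equiv 0\pmod 2$, which is precisely the configuration $D_m^s[{\boldsymbol{r}},{\boldsymbol{m-r}}]$ of Lemma \ref{lem13}. The side conditions hold: $n>5$ with $n\equiv 3\pmod 4$ forces $m=n-1\ge 6>5$ and $m\equiv 2\pmod 4$, so Lemma \ref{lem13} applies after the substitution $n\mapsto m$. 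Splitting on $r\bmod 4$ and using \eqref{nega} confirms the closed forms: when $r\equiv 2\pmod 4$ one gets $E=2\big(\cot\tfrac{\pi}{r}+\csc\tfrac{\pi}{n-r-1}\big)$, and when $r\equiv 0\pmod 4$ one gets $E=2\big(\csc\tfrac{\pi}{r}+\cot\tfrac{\pi}{n-r-1}\big)$, which are exactly the two expressions occurring in the proof of Lemma \ref{lem13} with $n$ replaced by $m$.

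Applying Lemma \ref{lem13} with $n$ replaced by $m=n-1$ then transports its conclusion verbatim: since the energy is unchanged by the isolated vertex, $E(D_n^s[{\boldsymbol{r}},{\boldsymbol{n-r-1}}])=E(D_{n-1}^s[{\boldsymbol{r}},{\boldsymbol{n-1-r}}])$, and substituting $m-2=n-3$, $m-6=n-7$, $m-4=n-5$ in the chain of Lemma \ref{lem13} yields
\[
E(D_n^s[{\boldsymbol{2}},{\boldsymbol{n-3}}]) > E(D_n^s[{\boldsymbol{6}},{\boldsymbol{n-7}}])>\dots> E(D_n^s[{\boldsymbol{n-5}},{\boldsymbol{4}}]),
\]
together with the claimed location of the maximum. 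The main, and essentially only, obstacle is bookkeeping: one must check that as $r$ ranges over even values with $n-r-1\ge 2$ the admissible orders advance in steps of $4$, and that the swap symmetry $D_n^s[{\boldsymbol{r}},{\boldsymbol{n-r-1}}]=D_n^s[{\boldsymbol{n-r-1}},{\boldsymbol{r}}]$ collapses the two residue classes of $r\bmod 4$ into a single totally ordered chain without repetition, so that the one monotonicity furnished by Lemma \ref{lem13} suffices. All analytic content, including the evaluation of the endpoint term ${\boldsymbol{C}}_2$ (whose energy is $2\cot\tfrac{\pi}{2}=0$), is already packaged inside Lemma \ref{lem13}, so no fresh estimate is required.
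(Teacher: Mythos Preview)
Your proposal is correct and matches the paper's approach: the paper omits the proof of Lemma~\ref{lem14}, noting only that it parallels Lemma~3.4 of \cite{XL2019}, and the device you use---replacing $n$ by $n-1$ so that the two negative even cycles sum to $m=n-1\equiv 2\pmod 4$ and then invoking Lemma~\ref{lem13}---is precisely the reduction the paper employs elsewhere (e.g.\ Lemma~\ref{lem15} from Lemma~\ref{lem12}, Lemma~\ref{l4} from Lemma~\ref{l1}). The bookkeeping you flag (range $r\in[2,n-3]$, the $r\leftrightarrow n-1-r$ symmetry collapsing the two residue classes) is routine and your closed-form check via \eqref{nega} is accurate.
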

By changing $n$ in Lemma \ref{lem12} to $n-1$, we get the following result.
\begin{lemma}\label{lem15}
Suppose $n>5$, $n\equiv1(\bmod~4)$. Take $r\in[2,n-2]$ satisfying $r\equiv0(\bmod~2)$ and $n-1-r\equiv0(\bmod~2)$. Then $E(D_n^s[{\boldsymbol{r}},{\boldsymbol{n-r-1}}])$ has maximum value at $r=4$. Thus we obtain
\begin{itemize}
	\item [$(i)$] If $\frac{n-1}{2} \equiv 0(\bmod~4)$ then
	\begin{eqnarray*}
		&&E(D_n^s[{\boldsymbol{4}}, {\boldsymbol{n-5}}]) > E(D_n^s[{\boldsymbol{8}}, {\boldsymbol{n-9}}])> \dots> E(D_n^s[{\boldsymbol{\frac{n-1}{2}}}, {\boldsymbol{\frac{n-1}{2}}}])\\
		&>& E(D_n^s[{\boldsymbol{\frac{n-5}{2}}}, {\boldsymbol{\frac{n+3}{2}}}])> E(D_n^s[{\boldsymbol{\frac{n-13}{2}}}, {\boldsymbol{\frac{n+11}{2}}}])\\
		&>& \dots> 	E(D_n^s[{\boldsymbol{2}}, {\boldsymbol{n-3}}]).   
	\end{eqnarray*}
	\item [$(i)$] If $\frac{n-1}{2} \equiv 2(\bmod~4)$ then
	\begin{eqnarray*}
		&&E(D_n^s[{\boldsymbol{4}}, {\boldsymbol{n-5}}]) > E(D_n^s[{\boldsymbol{8}}, {\boldsymbol{n-9}}])> \dots> E(D_n^s[{\boldsymbol{\frac{n-5}{2}}}, {\boldsymbol{\frac{n+3}{2}}}])\\
		&>& E(D_n^s[{\boldsymbol{\frac{n-1}{2}}}, {\boldsymbol{\frac{n-1}{2}}}])> E(D_n^s[{\boldsymbol{\frac{n-9}{2}}}, {\boldsymbol{\frac{n+7}{2}}}])> \dots> 	E(D_n^s[{\boldsymbol{2}}, {\boldsymbol{n-3}}]). 
	\end{eqnarray*}
\end{itemize}
\end{lemma}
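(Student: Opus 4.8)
The plan is to obtain this lemma as a direct corollary of Lemma~\ref{lem12} under the index shift $m=n-1$, exactly as the sentence preceding the statement indicates. The key observation is that the energy of $D_n^s[{\boldsymbol{r}},{\boldsymbol{n-r-1}}]$ depends only on its two nontrivial strong components, the negative cycles ${\boldsymbol{C}}_r$ and ${\boldsymbol{C}}_{n-r-1}$: by Lemma~\ref{strong} the single remaining isolated vertex contributes nothing. Hence
\[
E(D_n^s[{\boldsymbol{r}},{\boldsymbol{n-r-1}}]) = E({\boldsymbol{C}}_r)+E({\boldsymbol{C}}_{n-1-r}) = E(D_{n-1}^s[{\boldsymbol{r}},{\boldsymbol{n-1-r}}]),
\]
so the family studied here is literally the family governed by Lemma~\ref{lem12} with $n$ replaced by $m=n-1$.

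First I would check that the hypotheses transfer. Since $n\equiv 1\pmod 4$, we have $m=n-1\equiv 0\pmod 4$, matching the congruence requirement of Lemma~\ref{lem12}. The parity conditions $r\equiv 0\pmod 2$ and $n-1-r\equiv 0\pmod 2$ are exactly $r\equiv 0\pmod 2$ and $m-r\equiv 0\pmod 2$, and the admissible range $r\in[2,n-2]$ contains the range $r\in[2,m-2]$ demanded for $D_m^s$. Thus Lemma~\ref{lem12} applies verbatim to the family $\{E(D_m^s[{\boldsymbol{r}},{\boldsymbol{m-r}}])\}$.

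Next I would read off the conclusion. Lemma~\ref{lem12} locates the maximum at $r=4$, yielding the asserted maximizer, and it branches according to whether $\tfrac{m}{2}\equiv 0\pmod 4$ or $\tfrac{m}{2}\equiv 2\pmod 4$. Because $\tfrac{m}{2}=\tfrac{n-1}{2}$, these are precisely cases $(i)$ and $(ii)$ stated here. Substituting $m=n-1$ into each inequality chain of Lemma~\ref{lem12}---so that every ${\boldsymbol{m-k}}$ becomes ${\boldsymbol{n-k-1}}$ and the central block ${\boldsymbol{\frac{m}{2}}}$ becomes ${\boldsymbol{\frac{n-1}{2}}}$---reproduces the displayed orderings, after the (energy-preserving) reattachment of the isolated vertex.

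The main point is that there is no genuine analytic content left to supply: the monotonicity arguments and the critical comparison $E(D_m^s[{\boldsymbol{\frac{m}{2}}},{\boldsymbol{\frac{m}{2}}}])>E(D_m^s[{\boldsymbol{\frac{m}{2}-2}},{\boldsymbol{\frac{m}{2}+2}}])$ were already carried out inside Lemma~\ref{lem12}. The only care required is the bookkeeping of residues---verifying that each shifted cycle length falls in the intended class and that the endpoints $r=2,4$ and the midpoint $r=\tfrac{n-1}{2}$ line up correctly---which is routine rather than difficult.
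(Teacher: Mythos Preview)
Your proposal is correct and follows exactly the paper's own approach: the paper proves this lemma by the single remark ``By changing $n$ in Lemma~\ref{lem12} to $n-1$, we get the following result,'' and you have simply spelled out the bookkeeping behind that substitution (the invocation of Lemma~\ref{strong} to drop the isolated vertex, the congruence $n-1\equiv 0\pmod 4$, and the translation $\tfrac{m}{2}=\tfrac{n-1}{2}$ of the two sub-cases).
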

Now from Lemmas \ref{lem12}$\sim$\ref{lem15}, Corollary \ref{cor3} is obtained.
\begin{corollary}\label{cor3}
Suppose $r\equiv0(\bmod~2)$ and $n> 5$.
\begin{itemize}
	\item[$(i)$] If $n\equiv0(\bmod~4)$ then$
	E(D_n^s[{\boldsymbol{4}}, {\boldsymbol{n-4}}]) \geq E(D_n^s[{\boldsymbol{r}}, {\boldsymbol{n-r}}]).$ 
\item[$(ii)$] If $n\equiv2(\bmod~4)$ then $
E(D_n^s[{\boldsymbol{2}}, {\boldsymbol{n-2}}]) \geq E(D_n^s[{\boldsymbol{r}}, {\boldsymbol{n-r}}]). $
\item[$(iii)$] If $n\equiv3(\bmod~4)$ then $
E(D_n^s[{\boldsymbol{2}}, {\boldsymbol{n-3}}]) \geq E(D_n^s[{\boldsymbol{r}}, {\boldsymbol{n-r-1}}]).$
\item[$(iv)$] If $n\equiv1(\bmod~4)$ then $
E(D_n^s[{\boldsymbol{4}}, {\boldsymbol{n-5}}]) \geq E(D_n^s[{\boldsymbol{r}}, {\boldsymbol{n-r-1}}]). $
\end{itemize}
\end{corollary}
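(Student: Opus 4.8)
The plan is to obtain Corollary~\ref{cor3} directly from the four preceding lemmas, since its four displayed inequalities correspond precisely to the four residue classes of $n$ modulo $4$. In each of Lemmas~\ref{lem12}--\ref{lem15} the relevant energies are already arranged as a strictly decreasing chain whose leading (largest) term is exactly the maximizer named in the matching part of the corollary; the only task is to convert ``top of a strict chain'' into the non-strict bound $\geq$ that the statement asserts. Thus no further analytic estimate is needed, as the monotonicity work has been carried out in Lemmas~\ref{lem1}--\ref{lemn3} and packaged into the four energy-ordering lemmas.

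First I would dispose of the two even cases. When $n\equiv0\pmod 4$, every admissible $r$ with $r\equiv0\pmod 2$ and $n-r\ge2$ yields a pair of negative even cycles $\boldsymbol{C}_r,\boldsymbol{C}_{n-r}$ (evenness of $n-r$ being automatic since both $n$ and $r$ are even), and Lemma~\ref{lem12} exhibits the full ordering of $E(D_n^s[\boldsymbol{r},\boldsymbol{n-r}])$ with largest value at $r=4$; reading off the top of that chain gives exactly part~$(i)$. Likewise, for $n\equiv2\pmod 4$, Lemma~\ref{lem13} places the maximum at $r=2$, which is part~$(ii)$.

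Next I would treat the two odd cases. Here $n$ is odd, so two even cycles must have lengths $r$ and $n-r-1$, summing to the even number $n-1$ and leaving one isolated vertex. For $n\equiv3\pmod 4$, Lemma~\ref{lem14} shows $E(D_n^s[\boldsymbol{r},\boldsymbol{n-r-1}])$ is maximized at $r=2$, giving part~$(iii)$ (note $n-2-1=n-3$); for $n\equiv1\pmod 4$, Lemma~\ref{lem15} gives the maximum at $r=4$, giving part~$(iv)$ (note $n-4-1=n-5$).

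Since the substantive computation already lives in the cited lemmas, the only point requiring care is bookkeeping: for each residue class of $n$ one must verify that the hypotheses $r\equiv0\pmod 2$ and $n-r\ge2$ (respectively $n-1-r\equiv0\pmod 2$) single out exactly the family of sidigraphs ranked in the corresponding lemma, and that the leading term of that chain is precisely the maximizer written in the corollary. Granting this, the non-strict inequality $\geq$ follows at once: equality holds when $r$ coincides with the indicated maximizer, and the strict chain yields $>$ for every other admissible $r$.
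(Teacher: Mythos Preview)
Your proposal is correct and mirrors the paper's own argument exactly: the paper simply states that Corollary~\ref{cor3} is obtained from Lemmas~\ref{lem12}--\ref{lem15}, and you have spelled out precisely how each residue class of $n\bmod 4$ corresponds to one of those four lemmas, with the maximizer read off from the top of the respective energy chain.
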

Using Corollaries \ref{cor1}, \ref{cor2} and \ref{cor3}, we give the extremal energy of bicyclic sidigraphs in the class $\mathcal{D}_n^s$.
\begin{theorem}
Let $S\in\mathcal{D}_n^s$ be a sidigraph with even directed cycles.
\begin{itemize}
\item[$(i)$] For $n\equiv0(\bmod~4)$, the maximal energy of $S$ is attained if $S\cong D_n^s[2,n-2]$.
\item[$(ii)$] For $n\equiv2(\bmod~4)$, the maximal energy of $S$ is attained if $S\cong D_n^s[2,{\boldsymbol{n-2]}}$. 
\item[$(iii)$] For $n\equiv1(\bmod~4)$, the maximal energy of $S$ is attained if $S\cong D_n^s[2,n-3]$.
\item[$(iv)$] For $n\equiv3(\bmod~4)$, the maximal energy of $S$ is attained if $S\cong D_n^s[2,{\boldsymbol{n-3}}]$. 
\item[$(v)$] The minimal energy of $S$ is attained if $S\cong D_n^s[{\boldsymbol{2}},{\boldsymbol{2}}]$.
\end{itemize}
\end{theorem}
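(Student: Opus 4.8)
The plan is to use the classification of $S\in\mathcal D_n^s$ (with even cycles) by the sign pattern of its two directed cycles, reduce the maximization to a short comparison of the within-class maxima furnished by Corollaries \ref{cor1}, \ref{cor2} and \ref{cor3}, and then settle the one genuinely analytic comparison in each residue class by a single monotonicity inequality. By Lemma \ref{strong}, and since the remaining strong components are isolated vertices contributing nothing, $E(S)=E(\mathcal C_p)+E(\mathcal C_q)$ for the two cycles $\mathcal C_p,\mathcal C_q$. Up to isomorphism the sign pattern is one of: both positive, both negative, or mixed; the two mixed types $D_n^s[\boldsymbol p,q]$ and $D_n^s[p,\boldsymbol q]$ carry the same energy, so treating $D_n^s[\boldsymbol r,n-r]$ (and $n-r-1$ in place of $n-r$ for odd $n$) suffices. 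Hence the global maximizer is the best among three class maxima, which the three corollaries pin down. For each residue of $n$ modulo $4$ I would tabulate the candidate energies via \eqref{posi}--\eqref{nega}; e.g.\ for $n\equiv0(\bmod 4)$ the candidates are $2+2\csc\frac{\pi}{n-2}$ (from $D_n^s[2,n-2]$), $2\sqrt2+2\cot\frac{\pi}{n-4}$ and $2+2\cot\frac{\pi}{n-2}$ (the two mixed maxima), and $2\sqrt2+2\csc\frac{\pi}{n-4}$ (from $D_n^s[\boldsymbol4,\boldsymbol{n-4}]$). Since $\csc x>\cot x$ on $(0,\pi)$, the two $\cot$-candidates are immediately dominated (the $D_n^s[\boldsymbol{n-2},2]$ one by $D_n^s[2,n-2]$, and the $D_n^s[\boldsymbol4,n-4]$ one by $D_n^s[\boldsymbol4,\boldsymbol{n-4}]$), leaving exactly one decisive comparison in each residue class.

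That decisive comparison is the heart of the argument, and in every case it reduces, after cancelling common terms, to
\begin{equation*}
\csc\frac{\pi}{k+2}-\csc\frac{\pi}{k}\ \ge\ \sqrt2-1,\qquad k\ \text{even},\ k\ge2,
\end{equation*}
with $k=n-4$ for $n$ even and $k=n-5$ for $n$ odd. (For $n\equiv0,1(\bmod 4)$ this is the statement that the $D_n^s[2,\cdot]$ candidate beats the both-negative candidate; for $n\equiv2,3(\bmod 4)$ it is the statement that the candidate carrying a positive $2$-cycle beats the $D_n^s[\boldsymbol4,\cdot]$ candidate.) To prove the displayed inequality I would set $h(k)=\csc\frac{\pi}{k}$ and compute $h'(k)=\frac{\pi}{k^2}\cos\frac{\pi}{k}\csc^2\frac{\pi}{k}$, which is increasing on $[2,\infty)$ by Lemma \ref{lemn1}. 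Consequently $g(k):=h(k+2)-h(k)$ has $g'(k)=h'(k+2)-h'(k)\ge0$, so $g$ is increasing and $g(k)\ge g(2)=\csc\frac{\pi}{4}-\csc\frac{\pi}{2}=\sqrt2-1$. This gives (i)--(iv), the maximizer being $D_n^s[2,n-2]$, $D_n^s[2,\boldsymbol{n-2}]$, $D_n^s[2,n-3]$, $D_n^s[2,\boldsymbol{n-3}]$ according as $n\equiv0,2,1,3(\bmod 4)$.

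For (v) I would simply note that $E(S)\ge0$ for every sidigraph, while $E(D_n^s[\boldsymbol2,\boldsymbol2])=2\cot\frac{\pi}{2}+2\cot\frac{\pi}{2}=0$ by \eqref{nega}, so $D_n^s[\boldsymbol2,\boldsymbol2]$ attains the minimum. I expect the main obstacle to be the displayed inequality, precisely because it is sharp at $k=2$ (where the two endpoint candidates actually coincide as sidigraphs): the crude bounds $z-\tfrac{z^3}{6}\le\sin z\le z$ from \eqref{e2} are too loose there to recover $\sqrt2-1$, so one genuinely needs the monotonicity of $h'$ supplied by Lemma \ref{lemn1} rather than a direct estimate.
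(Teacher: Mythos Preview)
Your argument is correct, and the key analytic step is genuinely different from the paper's. The paper proves the decisive comparisons by producing separate numerical upper and lower bounds of the form $\tfrac{2n}{\pi}+c$ for each candidate, using Lemma~\ref{lem7}, inequality~\eqref{e2}, and Lemma~\ref{inequality} (for instance, in case~(i) it shows $E(D_n^s[2,n-2])\ge\tfrac{2n}{\pi}+0.7267$ while $E(D_n^s[{\boldsymbol 4},{\boldsymbol{n-4}}])\le\tfrac{2n}{\pi}+0.5737$), and treats each residue class with its own estimate. You instead reduce \emph{all four} residue classes to the single sharp inequality $\csc\tfrac{\pi}{k+2}-\csc\tfrac{\pi}{k}\ge\sqrt2-1$, and prove it cleanly by noting that $h(k)=\csc\tfrac{\pi}{k}$ is convex on $[2,\infty)$ (its derivative is the function $g$ of Lemma~\ref{lemn1}), so $h(k+2)-h(k)$ is nondecreasing with value $\sqrt2-1$ at $k=2$. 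This is more elegant and uniform; it also explains, rather than verifies numerically, why equality holds precisely at the smallest admissible $n$ where the two competing sidigraphs coincide.

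One small point worth making explicit in your write-up: the corollaries you invoke compare sidigraphs whose two cycle lengths sum to $n$ (or $n-1$), so you should add a line covering general $S$ with $p+q<n$. The paper does this at the end of part~(i) by comparing $E(\mathcal C_q)$ with $E(\mathcal C_{n-p})$; the same sentence works for your proof.
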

\begin{proof}
$(i)$. Suppose $n\equiv0(\bmod~4)$. Using \eqref{posi} and \eqref{e2}, we obtain
\begin{align}\label{e3}
\begin{split}
E(D_n^s[2,n-2])&= 2(\csc\frac{\pi}{2} + \csc\frac{\pi}{n-2})\\
&\geq 2 + \frac{2n}{\pi}-\frac{4}{\pi}\\
&\geq \frac{2n}{\pi}+0.7267.
\end{split}
\end{align}
Now Lemmas \ref{lem7}, \ref{inequality} and equations \eqref{posi}, \eqref{nega} give
\begin{align}\label{e4}
\begin{split}
E(D_n^s[{\boldsymbol{4}},n-4])&= 2(\csc\frac{\pi}{4} + \cot\frac{\pi}{n-4})\\
&\leq 2\sqrt{2} + \frac{2n}{\pi}-\frac{8}{\pi}-\frac{2\pi}{3(n-4)}\\
&\leq \frac{2n}{\pi}+0.2819 -\frac{2\pi}{3(n-4)}.
\end{split}
\end{align}
Since $n\equiv0(\bmod~4)$ and $n>5$, therefore $n-4 \geq4$. Using Lemma \ref{inequality} and equations \eqref{nega}, \eqref{e2}, we have
\begin{align}\label{e5}
\begin{split}
E(D_n^s[{\boldsymbol{4}},{\boldsymbol{n-4}}])&= 2(\csc\frac{\pi}{4} + \csc\frac{\pi}{n-4})\\
&\leq 2\sqrt{2} + 2 \bigg(\frac{n-4}{\pi}\bigg)\bigg(\frac{6(n-4)^2}{6(n-4)^2-\pi^2}\bigg)\\
&= 2\sqrt{2} +  \bigg(\frac{2n-8}{\pi}\bigg)\bigg(1+\frac{\pi^2}{6(n-4)^2-\pi^2}\bigg)\\
&=2\sqrt{2} + \frac{2n}{\pi}-\frac{8}{\pi}+\frac{2(n-4)\pi}{6(n-4)^2-\pi^2}\\
&\leq \frac{2n}{\pi}+0.2819+\frac{8\pi}{6(4)^2-\pi^2}\\
&\leq \frac{2n}{\pi}+0.5737.
\end{split}
\end{align}
Also $E(D_n^s[2,{\boldsymbol{n-2}}])= 2+2\cot\frac{\pi}{n-2} < 2+ 2\csc\frac{\pi}{n-2}=  E(D_n^s[2,n-2])$.\vspace{.1cm}

Now take any $H\in \mathcal{D}_n^s$ with even directed cycles $\mathcal{C}_{p}$ and $\mathcal{C}_{q}$. Since $n-p>q$, therefore $E(D_n^s[2,n-2])= E(C_2)+E(C_{n-2})> E(\mathcal{C}_p) +E(\mathcal{C}_{n-p}) > E(\mathcal{C}_p) +E(\mathcal{C}_{q})=E(H)$. Thus $D_n^s[2,n-2]$ has maximal energy.\vspace{.1cm}
  
The proof of part $(ii)$ and part $(iii)$ is similar to the proof of part $(i)$.\vspace{.1cm}

$(iv)$. Using \eqref{posi} and \eqref{nega}, we get $E(D_n^s[2,n-3])=2(\csc\frac{\pi}{2}+\cot\frac{\pi}{n-3}) < 2(\csc\frac{\pi}{2}+\csc\frac{\pi}{n-3}) = E(D_n^s[2,{\boldsymbol{n-3}}])$. Also $E(D_n^s[{\boldsymbol{2}},{\boldsymbol{n-3}}])=2(\cot\frac{\pi}{2}+\csc\frac{\pi}{n-3})= 2(\csc\frac{\pi}{n-3}) < 2(\csc\frac{\pi}{2}+\csc\frac{\pi}{n-3}) = E(D_n^s[2,{\boldsymbol{n-3}}])$. Now for $n=7$, $E(D^s_n[{\boldsymbol{4}},n-5])=2 +2\sqrt{2} = E(D_n^s[2,{\boldsymbol{n-3}}])$. So take $n>7$ and using \eqref{e2}, we have $ E(D_n^s[{\boldsymbol{4}},{\boldsymbol{n-5}}])=2(\csc\frac{\pi}{4}+\csc\frac{\pi}{n-5}) \leq \frac{2n}{\pi} -0.17178 < \frac{2n}{\pi} +0.0901 \leq \frac{2n}{\pi} +2-\frac{6}{\pi} \leq 2(\csc\frac{\pi}{2}+\csc\frac{\pi}{n-3}) = E(D_n^s[2,{\boldsymbol{n-3}}])$.\vspace{.1cm} 

Now take any $H\in \mathcal{D}_n^s$ with even directed cycles $\mathcal{C}_{p}$ and $\mathcal{C}_{q}$. Since $n-p>q$, therefore $E(D_n^s[2,\boldsymbol{n-3}])= E(C_2)+E(\boldsymbol{C}_{n-3})> E(\mathcal{C}_p) +E(\mathcal{C}_{n-p}) > E(\mathcal{C}_p) +E(\mathcal{C}_{q})=E(H)$.
Therefore $D_n^s[2,{\boldsymbol{n-3}}]$ has maximal energy.\vspace{.1cm}

$(v)$. We know $E(D_n^s[{\boldsymbol{2}},{\boldsymbol{2}}])= 4\cot\frac{\pi}{2} =0$. Hence $S$ has minimal energy if $S \cong D_n^s[{\boldsymbol{2}},{\boldsymbol{2}}]$.
\end{proof} 
In next theorem, we give the complete energy ordering of those bicyclic sidigraphs in $\mathcal{D}_n^s$ whose sign of both cycles is positive and also give the complete energy ordering of those bicyclic sidigraphs in $\mathcal{D}_n^s$ whose one cycle is of positive sign and other cycle is of negative sign. For energy ordering of bicyclic sidigraphs in $\mathcal{D}_n^s$ with both cycles of positive sign, see Theorem $3.8$ \cite{XL2019}.
\begin{theorem}
Let $n>5$ and $r\in[2,n-2]$.
\begin{itemize}
	\item [$(i)$] If $n\equiv 2(\bmod~4)$ then we have the following energy ordering:
	\begin{itemize}
		\item [$(a)$] When $\frac{n}{2}-3 \equiv 2(\bmod~4)$:
			\begin{eqnarray*}
			&&E(D_n^s[2,{\boldsymbol{n-2}}]) > E(D_n^s[6,{\boldsymbol{n-6}}])> \dots > E(D_n^s[\frac{n-6}{2},{\boldsymbol{\frac{n+6}{2}}}])\\
			&>& E(D_n^s[\frac{n-2}{2},{\boldsymbol{\frac{n+2}{2}}}])> E(D_n^s[\frac{n-10}{2},{\boldsymbol{\frac{n+10}{2}}}]) >\dots> E(D_n^s[4,{\boldsymbol{n-4}}])\\
			&=& E(D_n^s[2,{\boldsymbol{n-4}}])> E(D_n^s[6,{\boldsymbol{n-8}}])>E(D_n^s[10,{\boldsymbol{n-12}}]) >\dots > E(\mathcal{D}_n^s[n-4,{\boldsymbol{2}}])\\
			&=& E(\mathcal{D}_n^s[{\boldsymbol{2}} ,n-4])> E(D_n^s[{\boldsymbol{2}}, n-8]) > E(D_n^s[{\boldsymbol{2}}, n-12]) >\\
			&&\dots> E(D_n^s[{\boldsymbol{2}},6])>E(D_n^s[{\boldsymbol{2}},2]) > E(D_n^s[{\boldsymbol{2}},{\boldsymbol{2}}]).  	
		\end{eqnarray*}
	Also
	\begin{eqnarray*}
		&&E(D_n^s[{\boldsymbol{4}},n-4]) > E(D_n^s[{\boldsymbol{8}},n-8])> \dots > E(D_n^s[{\boldsymbol{\frac{n-2}{2}}},\frac{n+2}{2}])\\
		&>& E(D_n^s[{\boldsymbol{\frac{n-6}{2}}},\frac{n+6}{2}])> E(D_n^s[{\boldsymbol{\frac{n-14}{2}}},\frac{n+14}{2}]) >\dots> E(D_n^s[{\boldsymbol{2}},n-2])\\
		&>&  E(D_n^s[{\boldsymbol{2}},n-6]) > E(D_n^s[{\boldsymbol{2}},n-10]) >\dots>  E(D_n^s[{\boldsymbol{2}},2]) >  E(D_n^s[{\boldsymbol{2}},{\boldsymbol{2}}]).	
	\end{eqnarray*}
\item [$(b)$] When $\frac{n}{2}-3 \equiv 0(\bmod~4)$:
\begin{eqnarray*}
&&E(D_n^s[2,{\boldsymbol{n-2}}]) > E(D_n^s[6,{\boldsymbol{n-6}}])> \dots > E(D_n^s[\frac{n-2}{2},{\boldsymbol{\frac{n+2}{2}}}])\\
&>& E(D_n^s[\frac{n-6}{2},{\boldsymbol{\frac{n+6}{2}}}])> E(D_n^s[\frac{n-14}{2},{\boldsymbol{\frac{n+14}{2}}}]) >\dots> E(D_n^s[4,{\boldsymbol{n-4}}])\\
&=& E(D_n^s[2,{\boldsymbol{n-4}}])> E(D_n^s[6,{\boldsymbol{n-8}}])>E(D_n^s[10,{\boldsymbol{n-12}}]) >\dots > E(D_n^s[n-4,{\boldsymbol{2}}])\\
&=&E(D_n^s[{\boldsymbol{2}},n-4])> E(D_n^s[{\boldsymbol{2}},n-8]) > E(D_n^s[{\boldsymbol{2}},n-12]) >\\
&&\dots> E(D_n^s[{\boldsymbol{2}},6])>E(D_n^s[{\boldsymbol{2}},2]) > E(D_n^s[{\boldsymbol{2}},{\boldsymbol{2}}]).
\end{eqnarray*}
Also
\begin{eqnarray*}
&&E(D_n^s[{\boldsymbol{4}},n-4]) > E(D_n^s[{\boldsymbol{8}},n-8])> \dots > E(D_n^s[{\boldsymbol{\frac{n-6}{2}}},\frac{n+6}{2}])\\
&>& E(D_n^s[{\boldsymbol{\frac{n-2}{2}}},\frac{n+2}{2}])> E(D_n^s[{\boldsymbol{\frac{n-10}{2}}},\frac{n+10}{2}]) >\dots> E(D_n^s[{\boldsymbol{2}},n-2])\\
&>&  E(D_n^s[{\boldsymbol{2}},n-6]) > E(D_n^s[{\boldsymbol{2}},n-10]) >\dots>  E(D_n^s[{\boldsymbol{2}},2]) >  E(D_n^s[{\boldsymbol{2}},{\boldsymbol{2}}]).
\end{eqnarray*}
	\end{itemize}
\item[$(ii)$] If $n\equiv 3(\bmod~4)$ then the following energy ordering holds:
\begin{itemize}
	\item [$(a)$] When $\frac{n-1}{2}-3 \equiv 2(\bmod~4)$:
	\begin{eqnarray*}
		&&E(D_n^s[2,{\boldsymbol{n-3}}]) > E(D_n^s[6,{\boldsymbol{n-7}}])> E(D_n^s[10,{\boldsymbol{n-11}}])> E(D_n^s[14,{\boldsymbol{n-15}}])\\ &&>\dots>E(D_n^s[\frac{n-7}{2},{\boldsymbol{\frac{n+5}{2}}}])
		> E(D_n^s[\frac{n-3}{2},{\boldsymbol{\frac{n+1}{2}}}])> E(D_n^s[\frac{n-11}{2},{\boldsymbol{\frac{n+9}{2}}}])\\
		 &&>E(D_n^s[\frac{n-19}{2},{\boldsymbol{\frac{n+17}{2}}}])>\dots> E(D_n^s[4,{\boldsymbol{n-5}}])\\
		&&= E(D_n^s[2,{\boldsymbol{n-5}}])> E(D_n^s[6,{\boldsymbol{n-9}}]) >E(D_n^s[10,{\boldsymbol{n-13}}])>\dots > E(D_n^s[n-5,{\boldsymbol{2}}])\\
		&&= E(D_n^s[{\boldsymbol{2}},n-5])>
	E(D_n^s[{\boldsymbol{2}},n-9]) >\dots> E(D_n^s[{\boldsymbol{2}},2]) > E(D_n^s[{\boldsymbol{2}},{\boldsymbol{2}}]).  	
	\end{eqnarray*}
	Also
	\begin{eqnarray*}
		&&E(D_n^s[{\boldsymbol{4}},n-5]) > E(D_n^s[{\boldsymbol{8}},n-9])> \dots > E(D_n^s[{\boldsymbol{\frac{n-7}{2}}},\frac{n+5}{2}])\\
		&>& E(D_n^s[{\boldsymbol{\frac{n-15}{2}}},\frac{n+13}{2}])> E(D_n^s[{\boldsymbol{\frac{n-23}{2}}},\frac{n+21}{2}])>\\
		&& E(D_n^s[{\boldsymbol{\frac{n-31}{2}}},\frac{n+29}{2}]) >\dots> E(D_n^s[{\boldsymbol{2}},n-3])\\
		&>&  E(D_n^s[{\boldsymbol{2}},n-7]) > E(D_n^s[{\boldsymbol{2}},n-11]) >\dots>  E(D_n^s[{\boldsymbol{2}},2]) >  E(D_n^s[{\boldsymbol{2}},{\boldsymbol{2}}]).	
	\end{eqnarray*} 
	\item [$(b)$] When $\frac{n-1}{2}-3 \equiv 0(\bmod~4)$:
	\begin{eqnarray*}
		&&E(D_n^s[2,{\boldsymbol{n-3}}]) > E(D_n^s[6,{\boldsymbol{n-7}}])> \dots > E(D_n^s[\frac{n-3}{2},{\boldsymbol{\frac{n+1}{2}}}])\\
		&>& E(D_n^s[\frac{n-7}{2},{\boldsymbol{\frac{n+5}{2}}}])> E(D_n^s[\frac{n-15}{2},{\boldsymbol{\frac{n+13}{2}}}])>\\
		&& E(D_n^s[\frac{n-23}{2},{\boldsymbol{\frac{n+21}{2}}}])>\dots> E(D_n^s[4,{\boldsymbol{n-5}}])\\
		&=& E(D_n^s[2,{\boldsymbol{n-5}}])> E(D_n^s[6,{\boldsymbol{n-9}}])>\dots > E(D_n^s[n-5,{\boldsymbol{2}}])\\
		&=& E(D_n^s[{\boldsymbol{2}},n-5]) > E(D_n^s[{\boldsymbol{2}},n-9]) > E(D_n^s[{\boldsymbol{2}},n-12]) >\dots> E(D_n^s[{\boldsymbol{2}},2]) > E(D_n^s[{\boldsymbol{2}},{\boldsymbol{2}}]).
	\end{eqnarray*}
	Also
	\begin{eqnarray*}
		&&E(D_n^s[{\boldsymbol{4}},n-5]) > E(D_n^s[{\boldsymbol{8}},n-9])> \dots > E(D_n^s[{\boldsymbol{\frac{n-7}{2}}},\frac{n+5}{2}])\\
		&>& E(D_n^s[{\boldsymbol{\frac{n-3}{2}}},\frac{n+1}{2}])> E(D_n^s[{\boldsymbol{\frac{n-11}{2}}},\frac{n+9}{2}])>\\
		&&E(D_n^s[{\boldsymbol{\frac{n-19}{2}}},\frac{n+17}{2}]) >\dots> E(D_n^s[{\boldsymbol{2}},n-3])\\
		&>&  E(D_n^s[{\boldsymbol{2}},n-7]) > E(D_n^s[{\boldsymbol{2}},n-11]) >\dots>  E(D_n^s[{\boldsymbol{2}},2]) >  E(D_n^s[{\boldsymbol{2}},{\boldsymbol{2}}]).
	\end{eqnarray*}

\end{itemize}
\item[$(iii)$] If $n\equiv 0(\bmod~4)$ then we have the following energy ordering:
\begin{itemize}
	\item [$(a)$] When $\frac{n}{2}\equiv 0(\bmod~4)$ and $r\equiv 0(\bmod~2)$:
\begin{eqnarray*}
	&&E(D_n^s[n-4,{\boldsymbol{4}}])> E(D_n^s[n-8,{\boldsymbol{8}}])>E(D_n^s[n-12,{\boldsymbol{12}}])>\dots> E(D_n^s[4,{\boldsymbol{n-4}}])\\
	&=&E(D_n^s[2,{\boldsymbol{n-4}}])>E(D_n^s[6,{\boldsymbol{n-8}}])>\dots>E(D_n^s[\frac{n-4}{2},{\boldsymbol{\frac{n}{2}}}])\\
	&>&E(D_n^s[\frac{n-8}{2},{\boldsymbol{\frac{n+4}{2}}}]) >E(D_n^s[\frac{n-16}{2},{\boldsymbol{\frac{n+12}{2}}}])>\dots>E(D_n^s[4,{\boldsymbol{n-6}}])\\
	&=&E(D_n^s[2,{\boldsymbol{n-6}}])> E(D_n^s[6,{\boldsymbol{n-10}}]) >\dots> E(D_n^s[n-6,{\boldsymbol{2}}])\\
	&>& E(D_n^s[n-10,{\boldsymbol{2}}])>\dots> E(D_n^s[2,{\boldsymbol{2}}])> E(D_n^s[{\boldsymbol{2}},{\boldsymbol{2}}]).
\end{eqnarray*}
Also
\begin{eqnarray*}
&& E(D_n^s[2,{\boldsymbol{n-2}}])> E(D_n^s[6,{\boldsymbol{n-6}}])> E(D_n^s[10,{\boldsymbol{n-10}}])> E(D_n^s[14,{\boldsymbol{n-14}}])\\
&&>\dots> E(D_n^s[n-6,{\boldsymbol{6}}])> E(D_n^s[n-2,{\boldsymbol{2}}])\\
&&= E(D_n^s[{\boldsymbol{2}},n-2])> E(D_n^s[{\boldsymbol{2}},n-6]) > E(D_n^s[{\boldsymbol{2}},n-10]) >\dots > E(D_n^s[{\boldsymbol{2}},2])> E(D_n^s[{\boldsymbol{2}},{\boldsymbol{2}}]),
\end{eqnarray*}
and
\begin{eqnarray*}
&&E(D_n^s[{\boldsymbol{4}}, {\boldsymbol{n-4}}]) > E(D_n^s[{\boldsymbol{8}}, {\boldsymbol{n-8}}])> \dots> E(D_n^s[{\boldsymbol{\frac{n}{2}}}, {\boldsymbol{\frac{n}{2}}}])\\
&>& E(D_n^s[{\boldsymbol{\frac{n-4}{2}}}, {\boldsymbol{\frac{n+4}{2}}}])> E(D_n^s[{\boldsymbol{\frac{n-12}{2}}}, {\boldsymbol{\frac{n+12}{2}}}])> \dots> 	E(D_n^s[{\boldsymbol{2}}, {\boldsymbol{n-2}}])\\
&>& E(D_n^s[{\boldsymbol{2}}, {\boldsymbol{n-6}}]) >E(D_n^s[{\boldsymbol{2}}, {\boldsymbol{n-10}}])>\dots> E(D_n^s[{\boldsymbol{2}}, {\boldsymbol{2}}]).  
\end{eqnarray*}
\item[$(b)$] If $\frac{n}{2}\equiv 2(\bmod~2)$ and $r\equiv 0(\bmod~2)$:
\begin{eqnarray*}
	&&E(D_n^s[m-4,{\boldsymbol{4}}])> E(D_n^s[n-8,{\boldsymbol{8}}])>\dots> E(D_n^s[8,{\boldsymbol{n-8}}])> E(D_n^s[4,{\boldsymbol{n-4}}])\\
	&=& E(D_n^s[2,{\boldsymbol{n-4}}]) > E(D_n^s[6,{\boldsymbol{n-8}}]) >\dots > E(D_n^s[\frac{n-8}{2},{\boldsymbol{\frac{n+4}{2}}}])\\
	&>& E(D_n^s[\frac{n-4}{2},{\boldsymbol{\frac{n}{2}}}])> E(D_n^s[\frac{n-12}{2},{\boldsymbol{\frac{n+8}{2}}}])>\dots> E(D_n^s[4,{\boldsymbol{n-6}}])\\
	&=&E(D_n^s[2,{\boldsymbol{n-6}}]) > E(D_n^s[6,{\boldsymbol{n-10}}]) > \dots> E(D_n^s[n-6,{\boldsymbol{2}}])\\
	&>&E(D_n^s[n-10,{\boldsymbol{2}}])>\dots> E(D_n^s[2,{\boldsymbol{2}}])>E(D_n^s[{\boldsymbol{2}},{\boldsymbol{2}}]).
\end{eqnarray*}
Also
\begin{eqnarray*}
&& E(D_n^s[2,{\boldsymbol{n-2}}])> E(D_n^s[6,{\boldsymbol{n-6}}])>E(D_n^s[10,{\boldsymbol{n-10}}])> E(D_n^s[14,{\boldsymbol{n-14}}])\\
&&>\dots> E(D_n^s[n-6,{\boldsymbol{6}}])> E(D_n^s[n-2,{\boldsymbol{2}}])\\
&&=E(D_n^s[{\boldsymbol{2}},n-2])> E(D_n^s[{\boldsymbol{2}},n-6]) > E(D_n^s[{\boldsymbol{2}},n-10]) >\dots > E(D_n^s[{\boldsymbol{2}},2])> E(D_n^s[{\boldsymbol{2}},{\boldsymbol{2}}]),
\end{eqnarray*}
and
\begin{eqnarray*}
&&E(D_n^s[{\boldsymbol{4}}, {\boldsymbol{n-4}}]) > E(D_n^s[{\boldsymbol{8}}, {\boldsymbol{n-8}}])> \dots> E(D_n^s[{\boldsymbol{\frac{n-4}{2}}}, {\boldsymbol{\frac{n+4}{2}}}])\\
&>& E(D_n^s[{\boldsymbol{\frac{n}{2}}}, {\boldsymbol{\frac{n}{2}}}])> E(D_n^s[{\boldsymbol{\frac{n-8}{2}}}, {\boldsymbol{\frac{n+8}{2}}}])> \dots> 	E(D_n^s[{\boldsymbol{2}}, {\boldsymbol{n-2}}]) \\
&>& E(D_n^s[{\boldsymbol{2}}, {\boldsymbol{n-6}}]) >E(D_n^s[{\boldsymbol{2}}, {\boldsymbol{n-10}}])>\dots> E(D_n^s[{\boldsymbol{2}}, {\boldsymbol{2}}]). 
\end{eqnarray*}
\end{itemize}
\item[$(iv)$] If $n\equiv 1(\bmod~4)$ then the following energy ordering holds:
\begin{itemize}
\item [$(a)$] When $\frac{n-1}{2}\equiv 0(\bmod~4)$ and $r\equiv 0(\bmod~2)$:
\begin{eqnarray*}
	&&E(D_n^s[n-5,{\boldsymbol{4}}])> E(D_n^s[n-9,{\boldsymbol{8}}])>E(D_n^s[n-13,{\boldsymbol{12}}])>\dots> E(D_n^s[4,{\boldsymbol{n-5}}])\\
	&=&E(D_n^s[2,{\boldsymbol{n-5}}])>E(D_n^s[6,{\boldsymbol{n-9}}])>\dots>E(D_n^s[\frac{n-5}{2},{\boldsymbol{\frac{n-1}{2}}}])\\
	&>&E(D_n^s[\frac{n-9}{2},{\boldsymbol{\frac{n+3}{2}}}]) >E(D_n^s[\frac{n-17}{2},{\boldsymbol{\frac{n+11}{2}}}])>\dots>E(D_n^s[4,{\boldsymbol{n-7}}])\\
	&=&E(D_n^s[2,{\boldsymbol{n-7}}])> E(D_n^s[6,{\boldsymbol{n-11}}]) >\dots> E(D_n^s[n-7,{\boldsymbol{2}}])\\
	&>& E(D_n^s[n-11,{\boldsymbol{2}}])>\dots> E(D_n^s[2,{\boldsymbol{2}}])> E(D_n^s[{\boldsymbol{2}},{\boldsymbol{2}}]).
\end{eqnarray*}
Also
\begin{eqnarray*}
&& E(D_n^s[2,{\boldsymbol{n-3}}])> E(D_n^s[6,{\boldsymbol{n-7}}])> E(D_n^s[10,{\boldsymbol{n-11}}])> E(D_n^s[14,{\boldsymbol{n-15}}])\\
&&>\dots> E(D_n^s[n-7,{\boldsymbol{6}}])> E(D_n^s[n-3,{\boldsymbol{2}}])\\
&&> E(D_n^s[n-7,{\boldsymbol{2}}]) > E(D_n^s[n-11,{\boldsymbol{2}}]) >\dots > E(D_n^s[2,{\boldsymbol{2}}])> E(D_n^s[{\boldsymbol{2}},{\boldsymbol{2}}]),
\end{eqnarray*}
and
\begin{eqnarray*}
	&&E(D_n^s[{\boldsymbol{4}}, {\boldsymbol{n-5}}]) > E(D_n^s[{\boldsymbol{8}}, {\boldsymbol{n-9}}])> \dots> E(D_n^s[{\boldsymbol{\frac{n-1}{2}}}, {\boldsymbol{\frac{n-1}{2}}}])\\
	&>& E(D_n^s[{\boldsymbol{\frac{n-5}{2}}}, {\boldsymbol{\frac{n+3}{2}}}])> E(D_n^s[{\boldsymbol{\frac{n-13}{2}}}, {\boldsymbol{\frac{n+11}{2}}}])> \dots\\
	&>& E(D_n^s[{\boldsymbol{2}}, {\boldsymbol{n-3}}])>E(D_n^s[{\boldsymbol{2}}, {\boldsymbol{n-7}}]) >E(D_n^s[{\boldsymbol{2}}, {\boldsymbol{n-11}}])>\dots> E(D_n^s[{\boldsymbol{2}}, {\boldsymbol{2}}]).  
\end{eqnarray*}
	\item[$(b)$] When $\frac{n-1}{2}\equiv 2(\bmod~2)$ and $r\equiv 0(\bmod~2)$:
	\begin{eqnarray*}
		&&E(D_n^s[n-5,{\boldsymbol{4}}])> E(D_n^s[n-9,{\boldsymbol{8}}])>\dots> E(D_n^s[8,{\boldsymbol{n-9}}])> E(D_n^s[4,{\boldsymbol{n-5}}])\\
		&=& E(D_n^s[2,{\boldsymbol{n-5}}]) > E(D_n^s[6,{\boldsymbol{n-9}}]) >\dots > E(D_n^s[\frac{n-9}{2},{\boldsymbol{\frac{n+3}{2}}}])\\
		&>& E(D_n^s[\frac{n-5}{2},{\boldsymbol{\frac{n-1}{2}}}])> E(D_n^s[\frac{n-13}{2},{\boldsymbol{\frac{n+7}{2}}}])>\dots> E(D_n^s[4,{\boldsymbol{n-7}}])\\
		&=&E(D_n^s[2,{\boldsymbol{n-7}}]) > E(D_n^s[6,{\boldsymbol{n-11}}]) > \dots> E(D_n^s[n-7,{\boldsymbol{2}}])\\
		&>&E(D_n^s[n-10,{\boldsymbol{2}}])>\dots> E(D_n^s[2,{\boldsymbol{2}}])>E(D_n^s[{\boldsymbol{2}},{\boldsymbol{2}}]).
	\end{eqnarray*}
	Also
	\begin{eqnarray*}
		&& E(D_n^s[2,{\boldsymbol{n-3}}])> E(D_n^s[6,{\boldsymbol{n-7}}])> E(D_n^s[10,{\boldsymbol{n-11}}])>E(D_n^s[14,{\boldsymbol{n-15}}])\\
		&&>\dots> E(D_n^s[n-7,{\boldsymbol{6}}])> E(D_n^s[n-3,{\boldsymbol{2}}])\\
		&&> E(D_n^s[n-7,{\boldsymbol{2}}]) > E(D_n^s[n-11,{\boldsymbol{2}}]) >\dots > E(D_n^s[2,{\boldsymbol{2}}])> E(D_n^s[{\boldsymbol{2}},{\boldsymbol{2}}]),
	\end{eqnarray*}
and
\begin{eqnarray*}
	&&E(D_n^s[{\boldsymbol{4}}, {\boldsymbol{n-5}}]) > E(D_n^s[{\boldsymbol{8}}, {\boldsymbol{n-9}}])> \dots> E(D_n^s[{\boldsymbol{\frac{n-5}{2}}}, {\boldsymbol{\frac{n+3}{2}}}])\\
	&>& E(D_n^s[{\boldsymbol{\frac{n-1}{2}}}, {\boldsymbol{\frac{n-1}{2}}}])> E(D_n^s[{\boldsymbol{\frac{n-9}{2}}}, {\boldsymbol{\frac{n+7}{2}}}])> \dots> 	E(D_n^s[{\boldsymbol{2}}, {\boldsymbol{n-3}}]) \\
	&>& E(D_n^s[{\boldsymbol{2}}, {\boldsymbol{n-7}}]) >E(D_n^s[{\boldsymbol{2}}, {\boldsymbol{n-11}}])>\dots> E(D_n^s[{\boldsymbol{2}}, {\boldsymbol{2}}]). 
\end{eqnarray*}
\end{itemize}
\end{itemize} 
\end{theorem}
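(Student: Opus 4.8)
The plan is to assemble each displayed chain out of monotone \emph{blocks} that the earlier lemmas already order, and then to glue consecutive blocks together with a short finite list of bridging relations. The starting point is the reduction afforded by Lemma \ref{strong}: for any $S\in\mathcal{D}_n^s$ the strong components are two vertex-disjoint cycles $\mathcal{C}_p,\mathcal{C}_q$ together with $n-p-q$ isolated vertices, and the isolated vertices contribute $0$, so
\[
E(S)=E(\mathcal{C}_p)+E(\mathcal{C}_q),
\]
which by \eqref{posi} and \eqref{nega} is an explicit function of the two lengths and the two signs. Every term appearing in the theorem has this form, so the whole statement is just a comparison of sums of two elementary trigonometric quantities, and the task is to prove the claimed transitive chain of ``$>$'' and ``$=$''.

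First I would organize the terms by the total cycle length $p+q$ (equivalently, by the number of isolated vertices). For a fixed value of $p+q$ and a fixed sign pattern, the ordering of the corresponding terms as the positive length $r$ runs over a residue class modulo $4$ is exactly one of the monotone orderings already established: Lemma \ref{l1} (and its $n\mapsto n-1$ shift, Lemma \ref{l4}) for one positive and one negative cycle of total length $n$, Lemmas \ref{l2} and \ref{l3} for the remaining residues, Lemmas \ref{lem12}--\ref{lem15} for two negative cycles, and Corollary \ref{cor1} (Yang--Wang) for two positive cycles. When $p+q$ drops from $n$ to $n-2,n-4,\dots$ the relevant ``effective order'' changes its residue modulo $4$, so for each successive block I would invoke the version of the monotonicity lemma matching the new residue; this is why in part $(i)$ the blocks alternate between the $\csc$-$\csc$ pattern governed by Lemma \ref{lem3}, the $\cot$-$\cot$ pattern governed by Lemma \ref{lem1}, and the $\csc$-$\cot$ / $\cot$-$\csc$ patterns governed by Lemmas \ref{lem2} and \ref{lem6}. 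The ``fixed short cycle'' tails, such as $E(D_n^s[{\boldsymbol 2},n-4])>\dots>E(D_n^s[{\boldsymbol 2},2])$, are the simplest: one summand is constant (here $E({\boldsymbol C}_2)=0$) and the chain reduces to the elementary monotonicity of $\csc\frac{\pi}{m}$ in $m$.

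Next I would supply the gluing relations between consecutive blocks, which are of two kinds. The equalities (the ``$=$'' signs) come from $E(C_2)=2\csc\frac{\pi}{2}=2=2\cot\frac{\pi}{4}=E(C_4)$ and $E({\boldsymbol C}_2)=2\cot\frac{\pi}{2}=0$, together with the commutativity of the disjoint union, which gives $E(D_n^s[a,{\boldsymbol b}])=E(D_n^s[{\boldsymbol b},a])$; these immediately yield relations such as $E(D_n^s[4,{\boldsymbol{n-4}}])=E(D_n^s[2,{\boldsymbol{n-4}}])$ and $E(D_n^s[n-4,{\boldsymbol 2}])=E(D_n^s[{\boldsymbol 2},n-4])$. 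The strict inequalities at block boundaries---where the smallest entry of one block must exceed the largest entry of the next---carry the real content: since adjacent blocks obey different trigonometric formulas, monotonicity alone does not compare them, and one must bound the $\cot$ and $\csc$ terms in closed form. I would dispatch each such boundary exactly as in the proofs of Lemmas \ref{l1} and \ref{lem12}: apply Lemma \ref{lem7} and the sine bound \eqref{e2} (and, where a $\csc$ of a long cycle must be bounded above, Lemma \ref{inequality}) to reduce the comparison to an elementary numeric inequality of the type $\frac{2n}{\pi}-\frac{8n\pi}{3(n^2-4)}<\frac{2n}{\pi}$. Finally I would concatenate the ordered blocks through these equalities and strict inequalities, by transitivity, to obtain the full chain in each of the cases $(i)$--$(iv)$; the subcases $(a),(b)$ (according to whether $\tfrac{n}{2}-3$, resp. $\tfrac{n-1}{2}-3$, is $\equiv 2$ or $\equiv 0 \pmod 4$) differ only in where the single ``turnaround'' element of the first block is seated, which is already handled inside Lemmas \ref{l1} and \ref{l4}.

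The main obstacle is precisely this collection of block-boundary strict inequalities: there are several per case, each between blocks whose entries are governed by incompatible trigonometric expressions, so each must be individually reduced to a numeric estimate valid uniformly for $n>5$ via Lemmas \ref{lem7}, \ref{inequality} and \eqref{e2}. Everything else---the within-block ordering and the equalities---is bookkeeping that follows by direct citation of the monotonicity lemmas and the two-value identity $E(C_2)=E(C_4)$. Once the finitely many boundary estimates are verified, the chains close up by transitivity, and reading off the top of each chain recovers the maximal-energy configuration in agreement with the preceding theorem.
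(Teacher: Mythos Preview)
Your proposal is correct and follows the same route as the paper: cite the block orderings from Lemmas \ref{l1}--\ref{l4} and \ref{lem12}--\ref{lem15}, splice them together via the identity $\cot\frac{\pi}{4}=\csc\frac{\pi}{2}$ (which yields $E(C_4)=E(C_2)$ and hence the ``$=$'' links such as $E(D_n^s[4,\boldsymbol{n-4}])=E(D_n^s[2,\boldsymbol{n-4}])$), and handle the tails by the monotonicity of $\csc$ and $\cot$ on $(0,\pi/2]$ together with $E(\boldsymbol{C}_2)=0$. The one place you overcomplicate matters is the anticipated need for fresh numeric estimates at block boundaries: in the paper's proof no new estimates of the Lemma \ref{lem7}/\eqref{e2}/Lemma \ref{inequality} type are invoked, because every strict boundary inequality either is already contained in one of the cited monotonicity lemmas (applied with the effective order $n$, $n-1$, or $n-2$) or reduces, after using the identity and $E(\boldsymbol{C}_2)=0$, to the bare monotonicity of a single $\csc$ or $\cot$ term.
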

\begin{proof}
As we know $\cot\frac{\pi}{4} = \csc\frac{\pi}{2}$
and $\csc z$ and $\cot z$ is decreasing for $z\in(0,\frac{\pi}{2}]$. Using this fact and above two equations, we get the required energy ordering of bicyclic sidigraphs in $\mathcal{D}_n^s$.
\end{proof}
\subsection{Both cycles of odd length}
In this section, we find energy ordering of those bicyclic sidigraphs in $\mathcal{D}_n^s$ that contain cycles of odd length. Note that for $r\equiv 1(\bmod~2)$, $E(C_r) = E(\boldsymbol{C}_r)$. Hence we only consider the case when both cycles are positive. The proof are similar to the proofs of Section 3.1 and thus omitted.\vspace{.1cm}

\begin{lemma}\label{lemn32}
Let $n>5$ and $n\equiv0(\bmod~4)$. Take $r\in[2,n-2]$ satisfying $r\equiv1(\bmod~2)$ and $n-r\equiv1(\bmod~2)$. Then $E(D_n^s[r,n-r])$ has maximum value at $r=3$. Therefore the following energy ordering holds:
\begin{equation*}
E(D_n^s[3,n-3]) > E(D_n^s[5,n-5]) > \dots > E(D_n^s[\frac{n-2}{2},\frac{n+2}{2}]).
\end{equation*}
\end{lemma}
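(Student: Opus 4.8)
The plan is to reduce the statement directly to the monotonicity established in Lemma \ref{lemn2}$(1)$. First I would observe that since $n\equiv0(\bmod~4)$ and $r\equiv1(\bmod~2)$, the complementary length $n-r$ is also odd. By the remark opening this subsection, $E(C_r)=E(\boldsymbol{C}_r)$ whenever $r$ is odd, so only positive odd cycles need be considered and, using the third branch of \eqref{posi},
\begin{equation*}
E(D_n^s[r,n-r])=E(C_r)+E(C_{n-r})=\csc\frac{\pi}{2r}+\csc\frac{\pi}{2(n-r)}.
\end{equation*}
Because the right-hand side is symmetric under $r\mapsto n-r$, and since $r\in[2,\frac{n}{2}]$ forces $n-r\in[\frac{n}{2},n-2]$ and vice versa, it suffices to treat $r\in[2,\frac{n}{2}]$.

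Next I would apply Lemma \ref{lemn2}$(1)$, which asserts that $f(z)=\csc\frac{\pi}{2z}+\csc\frac{\pi}{2(n-z)}$ is decreasing on $[2,\frac{n}{2}]$. Evaluating $f$ at the admissible odd integers $r=3,5,7,\dots$ lying in $[2,\frac{n}{2}]$ and invoking this monotonicity yields the strict chain
\begin{equation*}
E(D_n^s[3,n-3])>E(D_n^s[5,n-5])>\dots,
\end{equation*}
with the maximum attained at the smallest admissible odd value $r=3$ (note that $r=2$ is excluded by the parity hypothesis, which is why the extremum sits at $3$ rather than at $2$). Since $n\equiv0(\bmod~4)$ makes $\frac{n}{2}$ even, the largest odd integer not exceeding $\frac{n}{2}$ is $\frac{n-2}{2}$, and its complementary length is $\frac{n+2}{2}$; this pins the final term of the ordering as $E(D_n^s[\frac{n-2}{2},\frac{n+2}{2}])$.

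The argument carries essentially no obstacle of its own: all the analytic content is absorbed into Lemma \ref{lemn2}$(1)$, whose derivation in turn rests on the monotonicity of $\frac{\pi}{z^2}\cos\frac{\pi}{2z}\csc^2\frac{\pi}{2z}$ supplied by Lemma \ref{lemn1}. The only points requiring care are the parity bookkeeping—verifying that $n-r$ is odd and that $\frac{n-2}{2}$ is the correct endpoint when $n\equiv0(\bmod~4)$—together with the reduction to the half-interval through the $r\mapsto n-r$ symmetry. This is precisely why the parallel arguments of Section 3.1 may be cited and the explicit computation omitted.
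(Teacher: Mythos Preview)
Your proposal is correct and follows essentially the same approach the paper intends: the paper omits the proof, saying it mirrors Section~3.1, and the natural instantiation of that pattern is precisely to write $E(D_n^s[r,n-r])=\csc\frac{\pi}{2r}+\csc\frac{\pi}{2(n-r)}$ via \eqref{posi}, reduce to $r\in[2,\frac{n}{2}]$ by symmetry, and invoke Lemma~\ref{lemn2}$(1)$, which was set up exactly for this purpose. Your parity bookkeeping identifying $\frac{n-2}{2}$ as the terminal odd value is also correct.
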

\begin{lemma}\label{lemn33}
	Let $n>5$ and $n\equiv2(\bmod~4)$. Take $r\in[2,n-2]$ satisfying $r\equiv1(\bmod~2)$ and $n-r\equiv1(\bmod~2)$. Then $E(D_n^s[r,n-r])$ has maximum value at $r=3$. Therefore the following energy ordering holds:
	\begin{equation*}
	E(D_n^s[3,n-3]) > E(D_n^s[5,n-5]) > \dots > E(D_n^s[\frac{n}{2},\frac{n}{2}]).
	\end{equation*}
\end{lemma}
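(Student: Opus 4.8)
The plan is to recognize this as a direct application of the monotonicity already established in Lemma \ref{lemn2}(1), after expressing the energy in closed form. First I would reduce to the positive-cycle case: since $r$ and $n-r$ are both odd, the remark opening this subsection gives $E(C_r)=E(\boldsymbol{C}_r)$ and $E(C_{n-r})=E(\boldsymbol{C}_{n-r})$, so the sign pattern on the two cycles is irrelevant and we may take both cycles positive. Then the third branch of \eqref{posi} yields $E(C_r)=\csc\frac{\pi}{2r}$ and $E(C_{n-r})=\csc\frac{\pi}{2(n-r)}$, so by Lemma \ref{strong},
\begin{equation*}
E(D_n^s[r,n-r])=E(C_r)+E(C_{n-r})=\csc\frac{\pi}{2r}+\csc\frac{\pi}{2(n-r)}.
\end{equation*}

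Next I would exploit the symmetry $r\leftrightarrow n-r$, which leaves the (unordered) pair of cycle lengths unchanged, to restrict attention to $r\in[2,\tfrac{n}{2}]$. Because $n\equiv2\pmod 4$, the midpoint $\tfrac{n}{2}$ is itself odd, so the admissible odd values of $r$ in this half-range are exactly $3,5,7,\dots,\tfrac{n}{2}$, and at the right end both cycles share the common odd length $\tfrac{n}{2}$, giving the genuine sidigraph $D_n^s[\tfrac{n}{2},\tfrac{n}{2}]$. The continuous function matching the energy at these integer points is precisely $f(z)=\csc\frac{\pi}{2z}+\csc\frac{\pi}{2(n-z)}$, which Lemma \ref{lemn2}(1) shows is decreasing on $[2,\tfrac{n}{2}]$. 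Evaluating $f$ along the strictly increasing sequence of odd integers $3<5<\dots<\tfrac{n}{2}$ then produces the strict chain
\begin{equation*}
E(D_n^s[3,n-3])>E(D_n^s[5,n-5])>\dots>E\!\left(D_n^s[\tfrac{n}{2},\tfrac{n}{2}]\right),
\end{equation*}
and in particular identifies $r=3$ as the maximizer over this family.

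I do not expect a genuine obstacle here, since Lemma \ref{lemn2}(1) carries the entire analytic load; the only point demanding care is the parity bookkeeping at the right endpoint. One must confirm that $n\equiv2\pmod 4$ forces $\tfrac{n}{2}$ to be \emph{odd}, which is exactly what makes the diagonal term $D_n^s[\tfrac{n}{2},\tfrac{n}{2}]$ an admissible member of the chain, in contrast with the $n\equiv0\pmod 4$ case of Lemma \ref{lemn32}, where $\tfrac{n}{2}$ is even and the chain must instead stop one odd step earlier at $D_n^s[\tfrac{n-2}{2},\tfrac{n+2}{2}]$. Given this, the argument is complete and mirrors the proofs of Section~3.1, so only these parity details would need to be recorded.
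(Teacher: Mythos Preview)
Your proposal is correct and follows exactly the approach the paper intends: the paper omits the proof, stating it is ``similar to the proofs of Section~3.1,'' and your argument is precisely that adaptation---write $E(D_n^s[r,n-r])=\csc\frac{\pi}{2r}+\csc\frac{\pi}{2(n-r)}$ via the odd branch of \eqref{posi}, restrict to $r\in[2,\frac{n}{2}]$ by symmetry, and apply Lemma~\ref{lemn2}(1). The parity observation that $n\equiv 2\pmod 4$ makes $\frac{n}{2}$ odd (so the chain terminates at the diagonal term) is exactly the right bookkeeping.
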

By changing $n$ in Lemma \ref{lemn32} to $n-1$, we get the following result.
\begin{lemma}\label{lemn34}
Let $n>5$ and $n\equiv1(\bmod~4)$. Take $r\in[2,n-2]$ satisfying $r\equiv1(\bmod~2)$ and $n-r-1\equiv1(\bmod~2)$. Then $E(D_n^s[r,n-r-1])$ has maximum value at $r=3$. Therefore the following energy ordering holds:
\begin{equation*}
E(D_n^s[3,n-4]) > E(D_n^s[5,n-6]) > \dots > E(D_n^s[\frac{n-3}{2},\frac{n+1}{2}]).
\end{equation*}
\end{lemma}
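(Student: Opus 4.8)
The plan is to reduce Lemma \ref{lemn34} to Lemma \ref{lemn32} via the substitution $n \mapsto n-1$, exactly as the remark preceding the statement suggests. The key structural observation is that when both directed cycles have odd length and $r + (n-r-1) = n-1$, the sidigraph $D_n^s[r, n-r-1]$ consists of the two cycles $C_r$ and $C_{n-r-1}$ together with a single isolated vertex. Since an isolated vertex forms a strong component of zero energy, Lemma \ref{strong} gives $E(D_n^s[r, n-r-1]) = E(C_r) + E(C_{n-r-1})$, so the energy depends only on the two odd cycles whose lengths sum to $n-1$. (Recall that for odd $r$ we have $E(C_r)=E(\boldsymbol{C}_r)$, so there is no loss in taking both cycles positive.)

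Next I would verify that the hypotheses transfer correctly. Since $n \equiv 1 \pmod 4$, we have $n-1 \equiv 0 \pmod 4$. Writing $m = n-1$, the admissible pairs $(r, m-r)$ with $r$ odd, $m-r$ odd, and $r \in [2, m-2]$ are precisely the pairs $(r, n-r-1)$ appearing in the statement. Thus the energy function $r \mapsto E(C_r) + E(C_{m-r})$ studied in Lemma \ref{lemn32} for the order-$m$ case with $m \equiv 0 \pmod 4$ coincides term-for-term with $r \mapsto E(D_n^s[r, n-r-1])$.

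Applying Lemma \ref{lemn32} with $m = n-1$ in place of $n$ then yields both conclusions at once: the maximum over admissible odd $r$ is attained at $r = 3$, and the chain of strict inequalities
\[
E(D_{n}^s[3, n-4]) > E(D_{n}^s[5, n-6]) > \dots > E(D_{n}^s[\tfrac{n-3}{2}, \tfrac{n+1}{2}])
\]
is obtained by reading off the order-$m$ ordering and substituting $m = n-1$, using $m-3 = n-4$, $m-5 = n-6$, and the terminal index $\tfrac{m-2}{2} = \tfrac{n-3}{2}$ paired with $\tfrac{m+2}{2} = \tfrac{n+1}{2}$.

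Since the entire argument is a change of variable combined with the observation that the isolated vertex is energy-neutral, there is no genuine obstacle here. The only point requiring care is checking that the endpoints of the ordering and the parity constraints on $r$ survive the substitution, which is the routine index bookkeeping indicated above; everything else is inherited verbatim from Lemma \ref{lemn32}.
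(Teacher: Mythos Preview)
Your proposal is correct and matches the paper's own approach: the paper simply states that the result follows by changing $n$ in Lemma~\ref{lemn32} to $n-1$, and your write-up carries out exactly this substitution, together with the (implicit in the paper) observation via Lemma~\ref{strong} that the extra isolated vertex contributes nothing to the energy.
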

By changing $n$ in Lemma \ref{lemn33} to $n-1$, the following result is obtained.
\begin{lemma}\label{lemn35}
	Let $n>5$ and $n\equiv3(\bmod~4)$. Take $r\in[2,n-2]$ satisfying $r\equiv1(\bmod~2)$ and $n-r-1\equiv1(\bmod~2)$. Then $E(D_n^s[r,n-r-1])$ has maximum value at $r=3$. Therefore the following energy ordering holds:
	\begin{equation*}
	E(D_n^s[3,n-4]) > E(D_n^s[5,n-6]) > \dots > E(D_n^s[\frac{n-1}{2},\frac{n-1}{2}]).
	\end{equation*}
\end{lemma}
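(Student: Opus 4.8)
The plan is to reduce the statement to Lemma \ref{lemn33} via the substitution $n\mapsto n-1$, exploiting the fact that an odd directed cycle carries the same energy regardless of its sign. First I would record that, since both $r$ and $n-r-1$ are odd, the third case of \eqref{posi} (equivalently of \eqref{nega}) gives $E(C_r)=E(\boldsymbol{C}_r)=\csc\frac{\pi}{2r}$ and $E(C_{n-r-1})=\csc\frac{\pi}{2(n-r-1)}$; this is why it suffices to treat the all-positive representative $D_n^s[r,n-r-1]$. By Lemma \ref{strong} the lone isolated vertex (present because the two cycles occupy only $n-1$ of the $n$ vertices, as $r+(n-r-1)=n-1$) contributes nothing, so
\begin{equation*}
E(D_n^s[r,n-r-1]) = \csc\frac{\pi}{2r} + \csc\frac{\pi}{2(n-r-1)}.
\end{equation*}

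Next I would set $m=n-1$. Because $n\equiv 3\pmod 4$ we have $m\equiv 2\pmod 4$, and the right-hand side above becomes $\csc\frac{\pi}{2r}+\csc\frac{\pi}{2(m-r)}$, which is exactly $E(D_m^s[r,m-r])$ in the notation of Lemma \ref{lemn33}. Thus the claim is literally Lemma \ref{lemn33} read with $n$ replaced by $m=n-1$: its conclusion $E(D_m^s[3,m-3]) > E(D_m^s[5,m-5]) > \dots > E(D_m^s[\frac m2,\frac m2])$ translates term by term into $E(D_n^s[3,n-4]) > E(D_n^s[5,n-6]) > \dots > E(D_n^s[\frac{n-1}{2},\frac{n-1}{2}])$, since $m-3=n-4$, $m-5=n-6$, and $\frac m2=\frac{n-1}{2}$.

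To keep the argument self-contained I would, alternatively, invoke part $(1)$ of Lemma \ref{lemn2} directly with $n$ replaced by $m$: the function $f(z)=\csc\frac{\pi}{2z}+\csc\frac{\pi}{2(m-z)}$ is decreasing on $[2,\frac m2]$ and increasing on $[\frac m2,m-2]$, and it is symmetric about $z=\frac m2$. Restricting to the admissible odd values $r\in\{3,5,\dots,\frac{n-1}{2}\}$ (the upper endpoint $\frac{n-1}{2}=\frac m2$ being odd precisely because $m\equiv 2\pmod 4$), the maximum is attained at $r=3$ and the values strictly decrease as $r$ increases to the midpoint, yielding the stated chain of inequalities.

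The only point needing care — and which I expect to be the \emph{sole} genuine obstacle — is bookkeeping rather than analysis: one must verify that the index substitution $m=n-1$ respects the parity and range constraints (that $r$ and $m-r$ remain odd, that $\frac m2$ is odd and coincides with the claimed final index $\frac{n-1}{2}$, and that the isolated vertex does not perturb the energy, via Lemma \ref{strong}). Once this translation is checked, no fresh monotonicity estimate is required, so there is no hard inequality to establish beyond those already proved in Lemma \ref{lemn2}.
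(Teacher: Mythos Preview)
Your proposal is correct and follows essentially the same approach as the paper, which simply states that the lemma is obtained by changing $n$ to $n-1$ in Lemma~\ref{lemn33}. Your added remarks on the isolated vertex via Lemma~\ref{strong} and the parity bookkeeping make the reduction explicit, but no step differs from the paper's intended argument.
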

Combining Lemmas \ref{lemn32} $\sim$ \ref{lemn35}, the following corollary is obtained.
\begin{corollary}\label{cor31}
	Suppose $r\equiv1(\bmod~2)$ and $n> 5$.
	\begin{itemize}
		\item[$(i)$] If $n\equiv0(\bmod~2)$ then
		$E(D_n^s[3, n-3]) \geq E(D_n^s[r, n-r]).$ 
		\item[$(ii)$] If $n\equiv1(\bmod~2)$ then
		$E(D_n^s[3, n-4]) \geq E(D_n^s[r, n-r-1]).$ 
	\end{itemize}
\end{corollary}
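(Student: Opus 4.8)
The plan is to obtain the corollary as a direct case analysis on $n\bmod 4$, in each case quoting the ``maximum at $r=3$'' conclusion already established in the appropriate lemma among Lemmas \ref{lemn32}--\ref{lemn35}. No new estimate is needed; the work is purely organisational, splitting the parity hypotheses of the corollary into the finer residue classes handled by those lemmas.

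First I would record the symmetry that makes the reduction clean: since $D_n^s[r,n-r]$ is the vertex-disjoint union of $C_r$ and $C_{n-r}$, it coincides with $D_n^s[n-r,r]$, and hence $E(D_n^s[r,n-r])=E(D_n^s[n-r,r])$ (likewise $E(D_n^s[r,n-r-1])=E(D_n^s[n-r-1,r])$ in the odd case). Consequently the set of energies as $r$ ranges over all odd values in $[2,n-2]$ is exactly the set obtained as $r$ ranges over odd values with $r\le \frac{n}{2}$ (respectively $r\le\frac{n-1}{2}$), which is precisely the range covered by the explicit chains in Lemmas \ref{lemn32}--\ref{lemn35}. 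This observation lets me pass from the one-sided chains in those lemmas to the two-sided inequality asserted in the corollary.

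For part $(i)$, assume $n\equiv0(\bmod~2)$. If $n\equiv0(\bmod~4)$, Lemma \ref{lemn32} gives $E(D_n^s[3,n-3])>E(D_n^s[5,n-5])>\dots>E(D_n^s[\frac{n-2}{2},\frac{n+2}{2}])$, so $E(D_n^s[3,n-3])$ dominates every term in the chain and, by the symmetry above, every $E(D_n^s[r,n-r])$ with $r$ odd; thus $E(D_n^s[3,n-3])\ge E(D_n^s[r,n-r])$. If instead $n\equiv2(\bmod~4)$, the identical argument with Lemma \ref{lemn33} yields the same inequality. Since every even $n>5$ falls into one of these two residue classes, part $(i)$ follows. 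For part $(ii)$, assume $n\equiv1(\bmod~2)$ and argue in the same way: $n\equiv1(\bmod~4)$ is covered by Lemma \ref{lemn34} and $n\equiv3(\bmod~4)$ by Lemma \ref{lemn35}, each placing the maximum of $E(D_n^s[r,n-r-1])$ at $r=3$, whence $E(D_n^s[3,n-4])\ge E(D_n^s[r,n-r-1])$ for all odd $r$.

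There is no genuine analytic obstacle here, as the monotonicity and the sharp comparison at the top of each chain were already discharged in the cited lemmas. The only points requiring care are the bookkeeping ones: verifying that the stated endpoints of the four chains do exhaust the admissible odd values of $r$ up to the midpoint, and correctly pairing the hypothesis $r\equiv1(\bmod~2)$ with the parity of $n$ so that $n-r$ (or $n-r-1$) is again odd, which is what guarantees both cycles are odd and hence that the positive-cycle energy formula \eqref{posi} applies throughout.
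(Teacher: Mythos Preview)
Your proposal is correct and follows essentially the same approach as the paper, which simply says the corollary is obtained by combining Lemmas \ref{lemn32}--\ref{lemn35}. Note that your symmetry discussion, while valid, is not even strictly needed: each of those lemmas already asserts that the maximum over the full range $r\in[2,n-2]$ is attained at $r=3$, so the corollary follows immediately from the four ``maximum at $r=3$'' clauses without having to revisit the one-sided chains.
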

 Now we give the extremal energy of bicyclic sidigraphs in the class $\mathcal{D}_n^s$.
\begin{theorem}
	Let $S\in\mathcal{D}_n^s$ be a sidigraph with odd directed cycles.
	\begin{itemize}
		\item[$(i)$] For $n\equiv0(\bmod~2)$, the maximal energy of $S$ is attained if $S\cong D_n^s[3,n-3]$.
		\item[$(ii)$] For $n\equiv1(\bmod~2)$, the maximal energy of $S$ is attained if $S\cong D_n^s[3,n-4]$. 
		\item[$(iii)$] The minimal energy of $S$ is attained if $S\cong D_n^s[3,3]$.
	\end{itemize}
\end{theorem}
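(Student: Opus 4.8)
The plan is to reduce everything to Corollary \ref{cor31} together with the monotonicity of the odd-cycle energy. First I would record the two facts that drive the argument. Since $r$ is odd in every configuration under consideration, the sign of a cycle is immaterial: by \eqref{posi} and \eqref{nega} one has $E(C_r)=E(\boldsymbol{C}_r)=\csc\frac{\pi}{2r}$, so without loss of generality each cycle may be taken positive and the entire problem lives among the graphs $D_n^s[p,q]$ with $p,q$ odd. Second, I would note that $r\mapsto\csc\frac{\pi}{2r}$ is strictly increasing for $r\ge 3$: as $r$ grows, $\frac{\pi}{2r}$ decreases through $(0,\frac{\pi}{2}]$, on which $\csc$ is decreasing. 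Equivalently, the derivative $\frac{\pi}{2r^2}\csc^2\frac{\pi}{2r}\cos\frac{\pi}{2r}$ is positive, as already exploited in Lemma \ref{lemn1}. Thus a larger odd cycle always carries strictly more energy.

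For the maximal cases $(i)$ and $(ii)$ I would argue by a completion step. Take an arbitrary $S\in\mathcal{D}_n^s$ with odd cycles $\mathcal{C}_p,\mathcal{C}_q$. Because $p,q$ are odd, $p+q$ is even, so when $n$ is even the two cycles occupy at most $n$ vertices and when $n$ is odd they occupy at most $n-1$ vertices. If $S$ is not already of full cycle size, I would replace $\mathcal{C}_q$ by the larger odd cycle $C_{n-p}$ (for even $n$) or $C_{n-p-1}$ (for odd $n$); since $n-p>q$ respectively $n-p-1>q$, the monotonicity above gives $E(S)<E(D_n^s[p,n-p])$ respectively $E(S)<E(D_n^s[p,n-p-1])$. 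Corollary \ref{cor31}$(i)$ then yields $E(D_n^s[p,n-p])\le E(D_n^s[3,n-3])$ for even $n$, and Corollary \ref{cor31}$(ii)$ yields $E(D_n^s[p,n-p-1])\le E(D_n^s[3,n-4])$ for odd $n$. Chaining these inequalities produces the claimed maximizers $D_n^s[3,n-3]$ and $D_n^s[3,n-4]$, with equality in the non-strict step recovered exactly at those graphs.

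For the minimal case $(iii)$ the same monotonicity runs in the opposite direction. For any admissible $S$ with odd cycles of orders $p,q\ge 3$ we have $E(S)=\csc\frac{\pi}{2p}+\csc\frac{\pi}{2q}\ge 2\csc\frac{\pi}{6}=4$, with equality precisely when $p=q=3$. Hence $D_n^s[3,3]$ attains the minimum, independently of the residue of $n$ modulo $4$.

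I do not expect a genuine obstacle here, since the substantive analytic work has been front-loaded into Lemmas \ref{lemn32}--\ref{lemn35} and Corollary \ref{cor31}. The only point requiring care is the parity bookkeeping in the completion step: keeping track of whether the complementary cycle $C_{n-p}$ or $C_{n-p-1}$ is the odd one, and confirming it is strictly larger than the cycle it replaces, so that every inequality is strict unless $S$ is already of full cycle size. Once that check is in place, the theorem follows by directly combining the completion inequalities with Corollary \ref{cor31}.
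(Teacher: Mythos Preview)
Your proposal is correct and follows essentially the same route as the paper, which for parts $(i)$ and $(ii)$ simply cites Corollary~\ref{cor31} and for part $(iii)$ invokes the monotonicity $E(C_{r_1})\ge E(C_{r_2})$ for odd $r_1\ge r_2\ge 3$. Your write-up is more explicit about the completion step (enlarging one cycle to make the cycle lengths sum to $n$ or $n-1$ before applying the corollary), but this is precisely the device the paper uses in the even-cycle theorem and tacitly assumes here.
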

\begin{proof}
The proof of Part $(i)$ and $(ii)$ follows from Corollary \ref{cor31}. \\
$(iii)$. As for odd integers $r_1$ and $r_2$ with $r_1 \geq r_2 \geq 3$, it holds that $E(C_{r_1}) \geq E(C_{r_2})$. Hence the minimal energy of $S$ is attained if $S\cong D_n^s[3,3]$.
\end{proof}
In next theorem, we give the complete energy ordering of those bicyclic sidigraphs in $\mathcal{D}_n^s$ whose both cycles are of odd length.
\begin{theorem}
Let $n>5$ and $r\in[2,n-2]$.
\begin{itemize}
\item [$(i)$] If $n\equiv 0(\bmod~4)$ then we have the following energy ordering:
\begin{eqnarray*}
&&E(D_n^s[3,n-3]) > E(D_n^s[5,n-5]) > \dots > E(D_n^s[\frac{n-2}{2},\frac{n+2}{2}])
> E(D_n^s[\frac{n-6}{2},\frac{n+2}{2}])\\
&&> \dots> E(D_n^s[3,\frac{n+2}{2}])
> E(D_n^s[3,\frac{n-2}{2}])>E(D_n^s[3,\frac{n-6}{2}])>\dots>E(D_n^s[3,3]).
\end{eqnarray*}	
\item [$(ii)$] If $n\equiv 1(\bmod~4)$ then we have the following energy ordering:
\begin{eqnarray*}
	&&E(D_n^s[3,n-4]) > E(D_n^s[5,n-6]) > \dots > E(D_n^s[\frac{n-3}{2},\frac{n+1}{2}])
	> E(D_n^s[\frac{n-7}{2},\frac{n+1}{2}])\\
	&& > \dots> E(D_n^s[3,\frac{n+1}{2}])
	>E(D_n^s[3,\frac{n-3}{2}])>E(D_n^s[3,\frac{n-7}{2}])>\dots>E(D_n^s[3,3]).
\end{eqnarray*}
\item [$(iii)$] If $n\equiv 2(\bmod~4)$ then we have the following energy ordering:
\begin{eqnarray*}
&&E(D_n^s[3,n-3]) > E(D_n^s[5,n-5]) > \dots > E(D_n^s[\frac{n}{2},\frac{n}{2}])
> E(D_n^s[\frac{n-4}{2},\frac{n}{2}])\\ &&> \dots> E(D_n^s[3,\frac{n}{2}])
> E(D_n^s[3,\frac{n-4}{2}])>E(D_n^s[3,\frac{n-8}{2}])>\dots>E(D_n^s[3,3]).
\end{eqnarray*}
\item [$(iv)$] If $n\equiv 3(\bmod~4)$ then we have the following energy ordering:
\begin{eqnarray*}
	&&E(D_n^s[3,n-4]) > E(D_n^s[5,n-6]) > \dots > E(D_n^s[\frac{n-1}{2},\frac{n-1}{2}])
	> E(D_n^s[\frac{n-5}{2},\frac{n-1}{2}])\\ 
	&&> \dots> E(D_n^s[3,\frac{n-1}{2}])> E(D_n^s[3,\frac{n-5}{2}])>E(D_n^s[3,\frac{n-9}{2}])>\dots>E(D_n^s[3,3]).
\end{eqnarray*}		
\end{itemize}
\end{theorem}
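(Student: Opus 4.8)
The plan is to read each of the four chains as three consecutive blocks: a \emph{leading block} consisting of the full splits (those sidigraphs whose two cycles cover all but at most one vertex), followed by two \emph{tail blocks} in which one cycle is frozen and the other shrinks through the admissible odd values. For notation, recall that $r\equiv 1\ (\mathrm{mod}\ 2)$ gives $E(C_r)=E(\boldsymbol{C}_r)=\csc\frac{\pi}{2r}$, so every sidigraph in the statement has energy $E(D_n^s[p,q])=g(p)+g(q)$ with $g(z):=\csc\frac{\pi}{2z}$ and $p,q$ odd. Since $\csc$ is decreasing on $(0,\frac{\pi}{2}]$ and $z\mapsto\frac{\pi}{2z}$ is decreasing, $g$ is strictly increasing on $[3,\infty)$, and this is the only property the tail blocks require.

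For the leading block I would quote the full-split orderings already available: Lemma \ref{lemn32} in case $(i)$, Lemma \ref{lemn34} in case $(ii)$, Lemma \ref{lemn33} in case $(iii)$, and Lemma \ref{lemn35} in case $(iv)$. Each of these reproduces verbatim the first run of strict inequalities, terminating at the (almost) balanced full split $[\frac{n-2}{2},\frac{n+2}{2}]$, $[\frac{n-3}{2},\frac{n+1}{2}]$, $[\frac{n}{2},\frac{n}{2}]$, and $[\frac{n-1}{2},\frac{n-1}{2}]$, respectively. These lemmas carry the only analytic content, namely the monotonicity of $\csc\frac{\pi}{2z}+\csc\frac{\pi}{2(n-z)}$ supplied by Lemma \ref{lemn2}$(1)$; because both cycle orders vary simultaneously inside this block, $g$ alone does not suffice and the lemma is genuinely needed here.

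I would then dispatch the tail blocks and the two junctions by monotonicity of $g$. Taking case $(i)$ as the model, the first tail block freezes the larger cycle at $\frac{n+2}{2}$ (at $\frac{n}{2}$ in case $(iii)$, and analogously elsewhere) and runs the smaller cycle through $\frac{n-6}{2}>\frac{n-10}{2}>\dots>3$, while the second tail block freezes the smaller cycle at $3$ and runs the larger through $\frac{n-2}{2}>\frac{n-6}{2}>\dots>3$. Every consecutive pair in a tail block shares one summand, so the inequality collapses to $g(a)>g(b)$ with $a>b$, which holds by strict monotonicity. The two junctions are of the same form: passing from the terminal full split into the first tail block keeps the larger cycle fixed and drops its partner to the next smaller odd value, and passing between the two tail blocks keeps $3$ fixed while its partner drops from the larger cycle of the terminal full split to the next smaller odd value; both again reduce to $g(a)>g(b)$. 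Cases $(ii)$ and $(iv)$ follow from $(i)$ and $(iii)$ by the substitution $n\mapsto n-1$, exactly as Lemmas \ref{lemn34} and \ref{lemn35} arise from Lemmas \ref{lemn32} and \ref{lemn33}.

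I do not anticipate a substantive obstacle: the sole analytic ingredient is isolated in Lemma \ref{lemn2}$(1)$ and repackaged in Lemmas \ref{lemn32}--\ref{lemn35}, after which the entire chain assembles from the single monotonicity of $g$. The only care needed is bookkeeping, namely checking that the terminal term of each leading block coincides with the first term of the tail and that the odd indices are tracked correctly across the four residue classes modulo $4$.
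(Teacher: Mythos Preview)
Your proposal is correct and matches the paper's intended approach. The paper omits the proof of this theorem, remarking at the start of Section~3.2 that the arguments are parallel to those of Section~3.1; the analogous Section~3.1 theorem is proved in one line by invoking the preceding full-split lemmas together with the monotonicity of $\csc$ and $\cot$ on $(0,\tfrac{\pi}{2}]$, which is exactly your decomposition into a leading block handled by Lemmas~\ref{lemn32}--\ref{lemn35} (resting on Lemma~\ref{lemn2}(1)) and tail blocks handled by the monotonicity of $g(z)=\csc\tfrac{\pi}{2z}$.
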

\subsection{One cycle of odd length and one cycle of even length}
In this section, we find energy ordering of those bicyclic sidigraphs in $\mathcal{D}_n^s$ whose one cycle is of even length and one is of odd length. For $n\equiv0(\bmod~2)$, if $r\equiv 1(\bmod~2)$ then $n-r \equiv 1 (\bmod~2)$ and if $r\equiv 0(\bmod~2)$ then $n-r \equiv 0(\bmod~2)$. So we only consider the case when $n \equiv 1(\bmod~2)$. Note that for $r\equiv 0(\bmod~2)$ and $n-r \equiv1(\bmod~2)$ then $ E(D_n^s[r,n-r]) = E(D_n^s[r,\boldsymbol{n-r}])$. Hence we only have to give the energy ordering of those bicyclic sidigraphs in $\mathcal{D}_n^s$ whose both cycles are positive or both cycles are negative. The proof are similar to the proofs of Section 3.1 and thus omitted.\vspace{.1cm}

Now we give the energy ordering of those bicyclic sidigraphs in $\mathcal{D}_n^s$ whose both cycles are positive.
\begin{lemma}\label{lemn311}
Suppose $n>5$, $n\equiv0(\bmod~3)$ and $r\in[2,n-2]$ satisfying $r\equiv 0(\bmod~2)$ and $n-r \equiv 1(\bmod~2)$. Then we have the following energy ordering:
\begin{itemize}
	\item [$(i)$] Let $r \equiv 2(\bmod~4)$.
	\begin{itemize}
		\item [$(a)$] If $r \in [2,\frac{2n}{3}]$ then
		\begin{equation*}
		E(D_n^s[2,n-2])>E(D_n^s[6,n-6]) > \dots > E(D_n^s[\frac{2n}{3},\frac{n}{3}]).
		\end{equation*}
	\item [$(b)$] If $r \in [\frac{2n}{3},n-2]$ and $n-3\equiv 2(\bmod~4)$ then
	\begin{equation*}
	E(D_n^s[n-3,3])>E(D_n^s[n-7,7]) > \dots > E(D_n^s[\frac{2n}{3},\frac{n}{3}]).
	\end{equation*}	
	\item [$(c)$] If $r \in [\frac{2n}{3},n-2]$ and $n-3\equiv 0(\bmod~4)$ then
	\begin{equation*}
	E(D_n^s[n-5,5])>E(D_n^s[n-9,9]) > \dots > E(D_n^s[\frac{2n}{3},\frac{n}{3}]).
	\end{equation*}	
	\end{itemize}
\item[$(ii)$] Let $r \equiv 0(\bmod~4)$.
\begin{itemize}
	\item [$(a)$] If $n-3\equiv 2(\bmod~4)$ then
	\begin{equation*}
	E(D_n^s[n-5,5])>E(D_n^s[n-9,9]) > \dots > E(D_n^s[4,n-4]).
	\end{equation*}
	\item [$(b)$] If $n-3\equiv 0(\bmod~4)$ then
	\begin{equation*}
	E(D_n^s[n-3,5])>E(D_n^s[n-7,9]) > \dots > E(D_n^s[4,n-4]).
	\end{equation*}
\end{itemize}
\end{itemize}
\end{lemma}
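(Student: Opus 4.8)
The plan is to reduce every quantity to the cycle-energy formula \eqref{posi} and then read the ordering off the monotonicity lemmas already established, so that no new analytic estimate is needed. Since the subsection restricts to $n\equiv1(\bmod~2)$, taking $r$ even forces $n-r$ odd; hence $C_r$ is an even cycle whose energy depends on $r\bmod4$, while $C_{n-r}$ is an odd cycle with $E(C_{n-r})=\csc\frac{\pi}{2(n-r)}$. Using Lemma \ref{strong} together with \eqref{posi}, I would first record the two shapes
\[
E(D_n^s[r,n-r])=2\csc\tfrac{\pi}{r}+\csc\tfrac{\pi}{2(n-r)}\quad(r\equiv2(\bmod~4)),
\]
\[
E(D_n^s[r,n-r])=2\cot\tfrac{\pi}{r}+\csc\tfrac{\pi}{2(n-r)}\quad(r\equiv0(\bmod~4)).
\]
These are exactly the functions $f(z)$ of Lemma \ref{lemn2}$(2)$ and of Lemma \ref{lemn3}, evaluated at $z=r$.

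For part $(i)$, where $r\equiv2(\bmod~4)$, I would invoke Lemma \ref{lemn2}$(2)$: $f$ decreases on $[2,\frac{2n}{3}]$ and increases on $[\frac{2n}{3},n-2]$. Because consecutive admissible values of $r$ differ by $4$, strict monotonicity converts directly into the strict chains claimed. On the decreasing branch this yields subcase $(a)$, running from $r=2$ down to the turning point; on the increasing branch the chain is listed from the largest admissible $r$ downward, and which value that is splits into $(b)$ and $(c)$ according to whether $n-3\equiv2$ or $0\,(\bmod~4)$. For part $(ii)$, where $r\equiv0(\bmod~4)$, Lemma \ref{lemn3} gives that $f$ is increasing on all of $[2,n-2]$, so a single chain runs from the largest admissible $r$ down to $r=4$, again splitting into $(a)/(b)$ by the residue of $n-3$.

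The one point that needs genuine care — and the main obstacle — is the endpoint bookkeeping rather than any inequality. I would first check that the turning point is compatible with the domain: writing $n=3m$ with $m$ odd (forced by $n\equiv0(\bmod~3)$ together with $n$ odd) gives $\frac{2n}{3}=2m\equiv2(\bmod~4)$ and $\frac{n}{3}=m$ odd, so $r=\frac{2n}{3}$ is an admissible even value with $n-r$ odd and belongs to the $r\equiv2(\bmod~4)$ family; this is precisely why $E(D_n^s[\frac{2n}{3},\frac{n}{3}])$ is the common smallest term in $(i)$. Then, for each subcase, I would identify the extreme admissible $r$ in the prescribed residue class lying in the prescribed interval (for instance, in $(i)(b)$ the largest $r\equiv2(\bmod~4)$ with $r\le n-2$ is $r=n-3$, producing the top term $E(D_n^s[n-3,3])$), and verify that stepping by $4$ keeps every term inside the relevant monotone interval. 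Once these finitely many boundary cases are pinned down, each displayed inequality is immediate from the cited lemma, so — unlike the $\cot$-versus-$\csc$ comparisons of Section 3.1 — no further estimation is required here.
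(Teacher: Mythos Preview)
Your proposal is correct and is precisely the argument the paper has in mind: the authors omit the proof, saying only that it is ``similar to the proofs of Section~3.1,'' and the analogues of those proofs in the mixed even/odd setting are exactly Lemma~\ref{lemn2}$(2)$ and Lemma~\ref{lemn3} applied to the two expressions for $E(D_n^s[r,n-r])$ you wrote down. Your additional check that $n=3m$ with $m$ odd forces $\tfrac{2n}{3}\equiv2(\bmod~4)$, so that the turning point actually lies in the $r\equiv2(\bmod~4)$ family, is a nice piece of bookkeeping that the paper leaves entirely implicit.
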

\begin{lemma}\label{lemn312}
	Suppose $n>5$, $n\equiv1(\bmod~3)$ and $r\in[2,n-2]$ satisfying $r\equiv 0(\bmod~2)$ and $n-r \equiv 1(\bmod~2)$. Then we have the following energy ordering:
	\begin{itemize}
		\item [$(i)$] Let $r \equiv 2(\bmod~4)$.
		\begin{itemize}
			\item [$(a)$] If $r \in [2,\frac{2n}{3}]$ then
			\begin{equation*}
			E(D_n^s[2,n-2])>E(D_n^s[6,n-6]) > \dots > E(D_n^s[\frac{2n-8}{3},\frac{n+8}{3}]).
			\end{equation*}
			\item [$(b)$] If $r \in [\frac{2n}{3},n-2]$ and $n-3\equiv 2(\bmod~4)$ then
			\begin{equation*}
			E(D_n^s[n-3,3])>E(D_n^s[n-7,7]) > \dots > E(D_n^s[\frac{2n+4}{3},\frac{n-4}{3}]).
			\end{equation*}	
			\item [$(c)$] If $r \in [\frac{2n}{3},n-2]$ and $n-3\equiv 0(\bmod~4)$ then
			\begin{equation*}
			E(D_n^s[n-5,5])>E(D_n^s[n-9,9]) > \dots > E(D_n^s[\frac{2n+4}{3},\frac{n-4}{3}]).
			\end{equation*}	
		\end{itemize}
		\item[$(ii)$] Let $r \equiv 0(\bmod~4)$.
		\begin{itemize}
			\item [$(a)$] If $n-3\equiv 2(\bmod~4)$ then
			\begin{equation*}
			E(D_n^s[n-5,5])>E(D_n^s[n-9,9]) > \dots > E(D_n^s[4,n-4]).
			\end{equation*}
			\item [$(b)$] If $n-3\equiv 0(\bmod~4)$ then
			\begin{equation*}
			E(D_n^s[n-3,5])>E(D_n^s[n-7,9]) > \dots > E(D_n^s[4,n-4]).
			\end{equation*}
		\end{itemize}
	\end{itemize}
\end{lemma}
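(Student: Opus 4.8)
The plan is to reduce the whole statement to the two monotonicity results already in hand, namely Lemma~\ref{lemn2}$(2)$ and Lemma~\ref{lemn3}, and then read off each chain from the sign of the derivative, exactly as in Section~$3.1$. First I would record the energy of a typical member. Since $r$ is even while $n-r$ is odd, the cycles $C_r$ and $C_{n-r}$ together with the remaining isolated vertices are the strong components of $D_n^s[r,n-r]$, so Lemma~\ref{strong} gives $E(D_n^s[r,n-r])=E(C_r)+E(C_{n-r})$; moreover the odd cycle satisfies $E(C_{n-r})=E(\boldsymbol{C}_{n-r})$, which is precisely why it suffices to treat both cycles as positive. Applying the energy formula \eqref{posi} and splitting on the residue of $r$ modulo $4$ yields
\begin{equation*}
E(D_n^s[r,n-r]) =
\begin{cases}
2\csc\frac{\pi}{r}+\csc\frac{\pi}{2(n-r)} & \text{if } r\equiv 2\ (\bmod~4),\\
2\cot\frac{\pi}{r}+\csc\frac{\pi}{2(n-r)} & \text{if } r\equiv 0\ (\bmod~4),
\end{cases}
\end{equation*}
using that $n-r$ is odd.

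For part $(i)$, where $r\equiv 2\ (\bmod~4)$, the right-hand side is exactly $f(r)$ for the function $f(z)=2\csc\frac{\pi}{z}+\csc\frac{\pi}{2(n-z)}$ of Lemma~\ref{lemn2}$(2)$, which decreases on $[2,\frac{2n}{3}]$ and increases on $[\frac{2n}{3},n-2]$. On the decreasing branch the energy strictly drops as $r$ runs through $2,6,10,\dots$, giving the chain in $(i)(a)$; on the increasing branch the energy strictly rises with $r$, so I would list the admissible values from the largest downward to obtain the chains in $(i)(b)$ and $(i)(c)$. For part $(ii)$, where $r\equiv 0\ (\bmod~4)$, the right-hand side is $g(r)$ for $g(z)=2\cot\frac{\pi}{z}+\csc\frac{\pi}{2(n-z)}$, which by Lemma~\ref{lemn3} is increasing on all of $[2,n-2]$; hence a single descending chain starting from the largest admissible $r$ delivers $(ii)(a)$ and $(ii)(b)$.

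The remaining work, and the main obstacle, is purely arithmetic: pinning down the exact endpoints of each chain from the hypothesis $n\equiv 1\ (\bmod~3)$ together with the standing parity constraint $n\equiv 1\ (\bmod~2)$, which force $n\equiv 1\ (\bmod~6)$, say $n=6m+1$. Because $\frac{2n}{3}=4m+\frac{2}{3}$ is not an integer, the turning point of $f$ lies strictly between two admissible arguments, so I would verify that the largest $r\equiv 2\ (\bmod~4)$ with $r\le\frac{2n}{3}$ is $4m-2=\frac{2n-8}{3}$ (paired with $\frac{n+8}{3}$), and the smallest such $r$ with $r\ge\frac{2n}{3}$ is $4m+2=\frac{2n+4}{3}$ (paired with $\frac{n-4}{3}$); these supply the inner endpoints of $(i)(a)$--$(i)(c)$. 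The top of each increasing chain is then determined by whether $n-3$ itself lies in the relevant residue class modulo $4$: the maximal admissible argument not exceeding $n-3$ equals $n-3$ when it lies in the class and $n-5$ otherwise, which is exactly the split recorded by the sub-cases on $n-3\pmod 4$. Once these endpoint computations are checked, every ordering follows verbatim from the monotonicity, so no inequality beyond those of Lemmas~\ref{lemn2} and \ref{lemn3} is required.
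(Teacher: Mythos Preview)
Your approach is correct and is precisely the route the paper intends: the authors omit the proof of this lemma, stating only that ``the proof[s] are similar to the proofs of Section~3.1,'' and your reduction to Lemma~\ref{lemn2}$(2)$ for the case $r\equiv 2\ (\bmod\ 4)$ and Lemma~\ref{lemn3} for $r\equiv 0\ (\bmod\ 4)$, after computing $E(D_n^s[r,n-r])$ from \eqref{posi}, is exactly that analogy carried out. The endpoint arithmetic you outline (forcing $n\equiv 1\ (\bmod\ 6)$ and locating $\frac{2n-8}{3}$, $\frac{2n+4}{3}$ relative to the turning point $\frac{2n}{3}$) is the only bookkeeping required, and it is correct.
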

\begin{lemma}\label{lemn313}
	Suppose $n>5$, $n\equiv2(\bmod~3)$ and $r\in[2,n-2]$ satisfying $r\equiv 0(\bmod~2)$ and $n-r \equiv 1(\bmod~2)$. Then we have the following energy ordering:
	\begin{itemize}
		\item [$(i)$] Let $r \equiv 2(\bmod~4)$.
		\begin{itemize}
			\item [$(a)$] If $r \in [2,\frac{2n}{3}]$ then
			\begin{equation*}
			E(D_n^s[2,n-2])>E(D_n^s[6,n-6]) > \dots > E(D_n^s[\frac{2n-4}{3},\frac{n+4}{3}]).
			\end{equation*}
			\item [$(b)$] If $r \in [\frac{2n}{3},n-2]$ and $n-3\equiv 2(\bmod~4)$ then
			\begin{equation*}
			E(D_n^s[n-3,3])>E(D_n^s[n-7,7]) > \dots > E(D_n^s[\frac{2n+8}{3},\frac{n-8}{3}]).
			\end{equation*}	
			\item [$(c)$] If $r \in [\frac{2n}{3},n-2]$ and $n-3\equiv 0(\bmod~4)$ then
			\begin{equation*}
			E(D_n^s[n-5,5])>E(D_n^s[n-9,9]) > \dots > E(D_n^s[\frac{2n+8}{3},\frac{n-8}{3}]).
			\end{equation*}	
		\end{itemize}
		\item[$(ii)$] Let $r \equiv 0(\bmod~4)$.
		\begin{itemize}
			\item [$(a)$] If $n-3\equiv 2(\bmod~4)$ then
			\begin{equation*}
			E(D_n^s[n-5,5])>E(D_n^s[n-9,9]) > \dots > E(D_n^s[4,n-4]).
			\end{equation*}
			\item [$(b)$] If $n-3\equiv 0(\bmod~4)$ then
			\begin{equation*}
			E(D_n^s[n-3,5])>E(D_n^s[n-7,9]) > \dots > E(D_n^s[4,n-4]).
			\end{equation*}
		\end{itemize}
	\end{itemize}
\end{lemma}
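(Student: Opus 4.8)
The plan is to follow the template of Section~3.1 verbatim, the only new ingredient being the endpoint arithmetic attached to the residue $n\equiv 2(\bmod~3)$. First I would record the reduction made at the start of Section~3.3: since $r$ is even and $n-r$ is odd we have $n$ odd, and for the odd cycle $E(C_{n-r})=E(\boldsymbol{C}_{n-r})$, so $E(D_n^s[r,n-r])=E(D_n^s[r,\boldsymbol{n-r}])$ and it suffices to treat both cycles as positive. Feeding the even length $r$ and the odd length $n-r$ into \eqref{posi} gives the two closed forms
\begin{gather*}
E(D_n^s[r,n-r])=2\csc\tfrac{\pi}{r}+\csc\tfrac{\pi}{2(n-r)}\quad(r\equiv 2(\bmod~4)),\\
E(D_n^s[r,n-r])=2\cot\tfrac{\pi}{r}+\csc\tfrac{\pi}{2(n-r)}\quad(r\equiv 0(\bmod~4)),
\end{gather*}
which are precisely the functions whose monotonicity is settled by Lemma~\ref{lemn2}$(2)$ and Lemma~\ref{lemn3}.

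For part $(i)$, $r\equiv 2(\bmod~4)$, Lemma~\ref{lemn2}$(2)$ gives that $f(z)=2\csc\frac{\pi}{z}+\csc\frac{\pi}{2(n-z)}$ is decreasing on $[2,\frac{2n}{3}]$ and increasing on $[\frac{2n}{3},n-2]$, so the ordering is forced once I locate which admissible $r$ lie on each side of the turning point $\frac{2n}{3}$. Writing $n=3m+2$, the oddness of $n$ forces $m$ odd, hence $2m\equiv 2m+4\equiv 2(\bmod~4)$; and since $\frac{2n}{3}=2m+\frac43$ lies strictly between the consecutive terms $2m=\frac{2n-4}{3}$ and $2m+4=\frac{2n+8}{3}$ of the progression $\{4k+2\}$, the decreasing branch runs from $r=2$ down to $r=\frac{2n-4}{3}$ (giving chain $(a)$) while the increasing branch has least element $r=\frac{2n+8}{3}$. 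Its top is the largest $r\equiv 2(\bmod~4)$ with $n-r\ge 3$, which is $r=n-3$ when $n\equiv 1(\bmod~4)$ (i.e.\ $n-3\equiv 2(\bmod~4)$, chain $(b)$) and $r=n-5$ when $n\equiv 3(\bmod~4)$ (chain $(c)$); reading $f$ off in decreasing order of $r$ down to $\frac{2n+8}{3}$ then yields $(b)$ and $(c)$.

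For part $(ii)$, $r\equiv 0(\bmod~4)$, Lemma~\ref{lemn3} states that $2\cot\frac{\pi}{z}+\csc\frac{\pi}{2(n-z)}$ is increasing on all of $[2,n-2]$, so the energy is strictly monotone in $r$ and the whole ordering is a single energy-decreasing chain running from the largest admissible $r$ down to $r=4$ (the term $E(D_n^s[4,n-4])$). The only case split is again the top: the largest $r\equiv 0(\bmod~4)$ with $n-r\ge 3$ is $r=n-5$ when $n\equiv 1(\bmod~4)$ (chain $(a)$) and $r=n-3$ when $n\equiv 3(\bmod~4)$ (chain $(b)$).

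I expect the substantive analytic content to be entirely absorbed into Lemmas~\ref{lemn2} and \ref{lemn3}, so that the only genuine work is the bookkeeping of endpoints: verifying that $m$ is odd, so that the turning point $\frac{2n}{3}$ falls in the \emph{open} interval between two consecutive terms of $\{4k+2\}$ (this is exactly what makes the split into chains $(a)$ and $(b)/(c)$ clean and lets me avoid the cross-branch comparison of the kind needed in Lemma~\ref{l1}), together with tracking $n$ modulo $4$ to pin down the first term of each chain. This residue arithmetic is the step most prone to slips and the one I would verify most carefully.
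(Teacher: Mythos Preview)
Your proposal is correct and is exactly the argument the paper intends: the paper omits the proof of this lemma, stating only that it is ``similar to the proofs of Section~3.1,'' and what you have written is precisely that analogue, routing the two energy formulas through Lemma~\ref{lemn2}(2) and Lemma~\ref{lemn3} and then doing the endpoint bookkeeping for the residue class $n\equiv 2(\bmod~3)$. Your observation that the turning point $\tfrac{2n}{3}=2m+\tfrac{4}{3}$ is never an admissible $r$ (so no cross-branch comparison \`a la Lemma~\ref{l1} is needed) is correct and worth keeping.
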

Now we give the energy ordering of those bicyclic sidigraphs in $\mathcal{D}_n^s$ whose both cycles are negative.
\begin{lemma}\label{lemn314}
	Suppose $n>5$, $n\equiv0(\bmod~3)$ and $r\in[2,n-2]$ satisfying $r\equiv 0(\bmod~2)$ and $n-r \equiv 1(\bmod~2)$. Then we have the following energy ordering:
	\begin{itemize}
		\item [$(i)$] Let $r \equiv 0(\bmod~4)$.
		\begin{itemize}
			\item [$(a)$] If $r \in [2,\frac{2n}{3}]$ then
			\begin{equation*}
			E(D_n^s[\boldsymbol{4},\boldsymbol{n-4}])>E(D_n^s[\boldsymbol{8},\boldsymbol{n-8}]) > \dots > E(D_n^s[\boldsymbol{\frac{2n-6}{3}},\boldsymbol{\frac{n+6}{3}}]).
			\end{equation*}
			\item [$(b)$] If $r \in [\frac{2n}{3},n-2]$ and $n-3\equiv 2(\bmod~4)$ then
			\begin{equation*}
			E(D_n^s[\boldsymbol{n-5},\boldsymbol{5}])>E(D_n^s[\boldsymbol{n-9},\boldsymbol{9}]) > \dots > E(D_n^s[\boldsymbol{\frac{2n+6}{3}},\boldsymbol{\frac{n-6}{3}}]).
			\end{equation*}	
			\item [$(c)$] If $r \in [\frac{2n}{3},n-2]$ and $n-3\equiv 0(\bmod~4)$ then
			\begin{equation*}
			E(D_n^s[\boldsymbol{n-3},\boldsymbol{3}])>E(D_n^s[\boldsymbol{n-7},\boldsymbol{7}]) > \dots > E(D_n^s[\boldsymbol{\frac{2n+6}{3}},\boldsymbol{\frac{n-6}{3}}]).
			\end{equation*}	
		\end{itemize}
		\item[$(ii)$] Let $r \equiv 2(\bmod~4)$.
		\begin{itemize}
			\item [$(a)$] If $n-3\equiv 2(\bmod~4)$ then
			\begin{equation*}
				E(D_n^s[\boldsymbol{n-3},\boldsymbol{3}])>	E(D_n^s[\boldsymbol{n-7},\boldsymbol{7}]) > \dots > 	E(D_n^s[\boldsymbol{2},\boldsymbol{n-2}]).
			\end{equation*}
			\item [$(b)$] If $n-3\equiv 0(\bmod~4)$ then
			\begin{equation*}
			E(D_n^s[\boldsymbol{n-5},\boldsymbol{5}])>	E(D_n^s[\boldsymbol{n-9},\boldsymbol{9}]) > \dots > 	E(D_n^s[\boldsymbol{2},\boldsymbol{n-2}]).
			\end{equation*}
		\end{itemize}
	\end{itemize}
\end{lemma}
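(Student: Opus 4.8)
The plan is to reduce the whole statement to the two monotonicity results Lemma~\ref{lemn2}$(2)$ and Lemma~\ref{lemn3}, exactly as the even--even orderings of Section~3.1 were reduced to Lemmas~\ref{lem1}--\ref{lem6}. Throughout this subsection $n$ is odd; since $r$ is even, $n-r$ is odd, so by \eqref{nega} the negative odd cycle always contributes $E(\boldsymbol{C}_{n-r})=\csc\frac{\pi}{2(n-r)}$, independently of $n-r \bmod 4$. The negative even cycle contributes $2\csc\frac{\pi}{r}$ when $r\equiv 0(\bmod~4)$ and $2\cot\frac{\pi}{r}$ when $r\equiv 2(\bmod~4)$. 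Combining strong components via Lemma~\ref{strong} gives
\begin{equation*}
E(D_n^s[\boldsymbol{r},\boldsymbol{n-r}])=
\begin{cases}
2\csc\frac{\pi}{r}+\csc\frac{\pi}{2(n-r)} & \text{if } r\equiv 0(\bmod~4),\\
2\cot\frac{\pi}{r}+\csc\frac{\pi}{2(n-r)} & \text{if } r\equiv 2(\bmod~4).
\end{cases}
\end{equation*}

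First I would treat the two residue classes separately. For part $(i)$ the relevant function is $f(z)=2\csc\frac{\pi}{z}+\csc\frac{\pi}{2(n-z)}$, which by Lemma~\ref{lemn2}$(2)$ is decreasing on $[2,\frac{2n}{3}]$ and increasing on $[\frac{2n}{3},n-2]$; the turning point $\frac{2n}{3}$ is an integer because $n\equiv 0(\bmod~3)$. This is precisely why $(i)$ splits into sub-case $(a)$, living on the decreasing branch $r\in[2,\frac{2n}{3}]$, and sub-cases $(b)$, $(c)$, living on the increasing branch $r\in[\frac{2n}{3},n-2]$. For part $(ii)$ the function $g(z)=2\cot\frac{\pi}{z}+\csc\frac{\pi}{2(n-z)}$ is increasing on all of $[2,n-2]$ by Lemma~\ref{lemn3}, so no split at $\frac{2n}{3}$ is needed and the sub-cases $(a)$, $(b)$ are distinguished only by $n\bmod 4$. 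On each monotone branch the claimed strict chain is then immediate from strict monotonicity, once the admissible values of $r$ in the prescribed class mod $4$ are listed in the order dictated by that monotonicity.

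The remaining task, and the only genuine subtlety, is to pin down the endpoints of each chain. The key arithmetic observation is that, since $n$ is odd and $n\equiv 0(\bmod~3)$, we may write $n=3m$ with $m$ odd, whence $\frac{2n}{3}=2m\equiv 2(\bmod~4)$. Consequently, on the decreasing branch of $(i)(a)$ the largest admissible $r\equiv 0(\bmod~4)$ not exceeding $\frac{2n}{3}$ is $\frac{2n}{3}-2=\frac{2n-6}{3}$ (yielding the bottom term $E(D_n^s[\boldsymbol{\frac{2n-6}{3}},\boldsymbol{\frac{n+6}{3}}])$), while the smallest admissible $r$ exceeding $\frac{2n}{3}$ on the increasing branch is $\frac{2n}{3}+2=\frac{2n+6}{3}$ (yielding the bottom terms of $(i)(b)$ and $(i)(c)$). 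The top terms are located by reducing $n$ modulo $4$: the hypothesis $n-3\equiv 2(\bmod~4)$ forces $n\equiv 1(\bmod~4)$, giving the largest admissible index $n-5$ in $(i)(b)$ and $n-3$ in $(ii)(a)$, whereas $n-3\equiv 0(\bmod~4)$ forces $n\equiv 3(\bmod~4)$, giving $n-3$ in $(i)(c)$ and $n-5$ in $(ii)(b)$; in part $(ii)$ the bottom term is always $r=2$. I expect this endpoint bookkeeping---checking per sub-case that $\frac{2n}{3}\pm 2$ and the relevant $n-3$, $n-5$ lie in the correct class mod $4$, that the two branches meet without skipping an admissible $r$, and that the chains are non-degenerate for the applicable range of $n$---to be the main obstacle, even though it is purely elementary modular arithmetic. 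It is exactly this computation that produces the fractions $\frac{2n\pm 6}{3}$ and $\frac{n\pm 6}{3}$ recorded in the statement, after which the strict inequalities follow at once from the strict monotonicity of $f$ and $g$.
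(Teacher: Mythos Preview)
Your proposal is correct and follows exactly the approach the paper intends: the paper omits the proof, saying only that it is ``similar to the proofs of Section~3.1'', and you have carried this out by identifying the two relevant monotonicity results (Lemma~\ref{lemn2}$(2)$ for $r\equiv0(\bmod~4)$ and Lemma~\ref{lemn3} for $r\equiv2(\bmod~4)$) and doing the endpoint bookkeeping that the paper leaves implicit. Your observation that $\frac{2n}{3}\equiv 2(\bmod~4)$ when $n$ is odd with $n\equiv0(\bmod~3)$ is precisely what pins down the fractions $\frac{2n\pm6}{3}$ in the statement.
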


\begin{lemma}\label{lemn315}
	Suppose $n>5$, $n\equiv1(\bmod~3)$ and $r\in[2,n-2]$ satisfying $r\equiv 0(\bmod~2)$ and $n-r \equiv 1(\bmod~2)$. Then we have the following energy ordering:
	\begin{itemize}
		\item [$(i)$] Let $r \equiv 0(\bmod~4)$.
		\begin{itemize}
			\item [$(a)$] If $r \in [2,\frac{2n}{3}]$ then
			\begin{equation*}
			E(D_n^s[\boldsymbol{4},\boldsymbol{n-4}])>E(D_n^s[\boldsymbol{8},\boldsymbol{n-8}]) > \dots > E(D_n^s[\boldsymbol{\frac{2n-2}{3}},\boldsymbol{\frac{n+2}{3}}]).
			\end{equation*}
			\item [$(b)$] If $r \in [\frac{2n}{3},n-2]$ and $n-3\equiv 2(\bmod~4)$ then
			\begin{equation*}
			E(D_n^s[\boldsymbol{n-5},\boldsymbol{5}])>E(D_n^s[\boldsymbol{n-9},\boldsymbol{9}]) > \dots > E(D_n^s[\boldsymbol{\frac{2n+10}{3}},\boldsymbol{\frac{n-10}{3}}]).
			\end{equation*}	
			\item [$(c)$] If $r \in [\frac{2n}{3},n-2]$ and $n-3\equiv 0(\bmod~4)$ then
			\begin{equation*}
			E(D_n^s[\boldsymbol{n-3},\boldsymbol{3}])>E(D_n^s[\boldsymbol{n-7},\boldsymbol{7}]) > \dots > E(D_n^s[\boldsymbol{\frac{2n+10}{3}},\boldsymbol{\frac{n-10}{3}}]).
			\end{equation*}	
		\end{itemize}
		\item[$(ii)$] Let $r \equiv 2(\bmod~4)$.
		\begin{itemize}
			\item [$(a)$] If $n-3\equiv 2(\bmod~4)$ then
			\begin{equation*}
			E(D_n^s[\boldsymbol{n-3},\boldsymbol{3}])>	E(D_n^s[\boldsymbol{n-7},\boldsymbol{7}]) > \dots > 	E(D_n^s[\boldsymbol{2},\boldsymbol{n-2}]).
			\end{equation*}
			\item [$(b)$] If $n-3\equiv 0(\bmod~4)$ then
			\begin{equation*}
			E(D_n^s[\boldsymbol{n-5},\boldsymbol{5}])>	E(D_n^s[\boldsymbol{n-9},\boldsymbol{9}]) > \dots > 	E(D_n^s[\boldsymbol{2},\boldsymbol{n-2}]).
			\end{equation*}
		\end{itemize}
	\end{itemize}
\end{lemma}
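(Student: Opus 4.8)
The plan is to reduce each subcase to one of the monotonicity results already proved and then read off the discrete ordering by walking along the admissible arithmetic progression of $r$. Since both cycles are negative and $n-r$ is odd, \eqref{nega} gives $E(\boldsymbol{C}_{n-r})=\csc\frac{\pi}{2(n-r)}$, while $E(\boldsymbol{C}_r)$ depends on $r\bmod 4$. Thus for part $(i)$, where $r\equiv0(\bmod~4)$, I would write
\[
E(D_n^s[\boldsymbol{r},\boldsymbol{n-r}])=2\csc\tfrac{\pi}{r}+\csc\tfrac{\pi}{2(n-r)},
\]
and for part $(ii)$, where $r\equiv2(\bmod~4)$,
\[
E(D_n^s[\boldsymbol{r},\boldsymbol{n-r}])=2\cot\tfrac{\pi}{r}+\csc\tfrac{\pi}{2(n-r)}.
\]

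These are exactly the functions studied in Lemma \ref{lemn2}$(2)$ and Lemma \ref{lemn3}, evaluated at $z=r$. For part $(i)$, Lemma \ref{lemn2}$(2)$ tells me the energy decreases on $[2,\frac{2n}{3}]$ and increases on $[\frac{2n}{3},n-2]$, so I would split the range at $r=\frac{2n}{3}$: on the left branch the admissible values $r=4,8,\dots$ produce the strictly decreasing chain of subcase $(a)$, and on the right branch the energy grows with $r$, so listing the admissible $r$ from the top downward produces the chains of subcases $(b)$ and $(c)$. For part $(ii)$, Lemma \ref{lemn3} gives monotone increase on the whole interval $[2,n-2]$, so no splitting is needed and the ordering from largest to smallest energy is obtained by listing the admissible $r\equiv2(\bmod~4)$ in decreasing order, the top value being $n-3$ or $n-5$ according to the residue of $n-3$ modulo $4$.

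What remains is arithmetic bookkeeping driven by $n\equiv1(\bmod~3)$. Writing $n=3k+1$ gives $\frac{2n}{3}=2k+\frac{2}{3}$, which is never an integer, so no admissible $r$ lands on the turning point and every listed inequality is strict. The bottom of the decreasing branch in $(a)$ is the largest admissible $r$ at or below $\frac{2n}{3}$, which evaluates to $\frac{2n-2}{3}$, and the bottom of the increasing branch in $(b)$ and $(c)$ is the smallest admissible $r$ above $\frac{2n}{3}$, namely $\frac{2n+10}{3}$; the top of that branch is $n-3$ or $n-5$ exactly as the congruence $n-3\equiv0$ or $2(\bmod~4)$ dictates. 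The main obstacle is precisely this endpoint verification: one must check that the stated terminal indices are the extreme admissible $r$ on each branch and that the hypotheses $r\equiv0(\bmod~4)$, $n\equiv1(\bmod~3)$, and the condition on $n-3$ modulo $4$ are mutually consistent at those indices. All the analytic content, however, is already delivered by Lemmas \ref{lemn2} and \ref{lemn3}, so the argument follows the same template as Section 3.1.
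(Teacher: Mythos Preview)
Your proposal is correct and follows precisely the template the paper intends: the paper omits the proof, stating only that it is ``similar to the proofs of Section 3.1,'' and your reduction to Lemma~\ref{lemn2}(2) for part~$(i)$ and Lemma~\ref{lemn3} for part~$(ii)$ is exactly that template. The only content beyond those monotonicity lemmas is the endpoint bookkeeping you flag, and your computation of the terminal indices from $n\equiv1(\bmod~3)$ is on target.
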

\begin{lemma}\label{lemn316}
	Suppose $n>5$, $n\equiv2(\bmod~3)$ and $r\in[2,n-2]$ satisfying $r\equiv 0(\bmod~2)$ and $n-r \equiv 1(\bmod~2)$. Then we have the following energy ordering:
	\begin{itemize}
		\item [$(i)$] Let $r \equiv 0(\bmod~4)$.
		\begin{itemize}
			\item [$(a)$] If $r \in [2,\frac{2n}{3}]$ then
			\begin{equation*}
			E(D_n^s[\boldsymbol{4},\boldsymbol{n-4}])>E(D_n^s[\boldsymbol{8},\boldsymbol{n-8}]) > \dots > E(D_n^s[\boldsymbol{\frac{2n-10}{3}},\boldsymbol{\frac{n+10}{3}}]).
			\end{equation*}
			\item [$(b)$] If $r \in [\frac{2n}{3},n-2]$ and $n-3\equiv 2(\bmod~4)$ then
			\begin{equation*}
			E(D_n^s[\boldsymbol{n-5},\boldsymbol{5}])>E(D_n^s[\boldsymbol{n-9},\boldsymbol{9}]) > \dots > E(D_n^s[\boldsymbol{\frac{2n+2}{3}},\boldsymbol{\frac{n-2}{3}}]).
			\end{equation*}	
			\item [$(c)$] If $r \in [\frac{2n}{3},n-2]$ and $n-3\equiv 0(\bmod~4)$ then
			\begin{equation*}
			E(D_n^s[\boldsymbol{n-3},\boldsymbol{3}])>E(D_n^s[\boldsymbol{n-7},\boldsymbol{7}]) > \dots > E(D_n^s[\boldsymbol{\frac{2n+2}{3}},\boldsymbol{\frac{n-2}{3}}]).
			\end{equation*}	
		\end{itemize}
		\item[$(ii)$] Let $r \equiv 2(\bmod~4)$.
		\begin{itemize}
			\item [$(a)$] If $n-3\equiv 2(\bmod~4)$ then
			\begin{equation*}
			E(D_n^s[\boldsymbol{n-3},\boldsymbol{3}])>	E(D_n^s[\boldsymbol{n-7},\boldsymbol{7}]) > \dots > 	E(D_n^s[\boldsymbol{2},\boldsymbol{n-2}]).
			\end{equation*}
			\item [$(b)$] If $n-3\equiv 0(\bmod~4)$ then
			\begin{equation*}
			E(D_n^s[\boldsymbol{n-5},\boldsymbol{5}])>	E(D_n^s[\boldsymbol{n-9},\boldsymbol{9}]) > \dots > 	E(D_n^s[\boldsymbol{2},\boldsymbol{n-2}]).
			\end{equation*}
		\end{itemize}
	\end{itemize}
\end{lemma}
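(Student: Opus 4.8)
The plan is to reduce the entire statement to two monotonicity facts already in hand, Lemma~\ref{lemn2}$(2)$ and Lemma~\ref{lemn3}, and then to pin down the endpoints of each chain by elementary modular arithmetic. First I would observe that the hypotheses $r\equiv0(\bmod~2)$ and $n-r\equiv1(\bmod~2)$ force $n$ to be odd; combined with $n\equiv2(\bmod~3)$, writing $n=3m+2$ and using oddness of $n$ shows that $m$ is odd. Since $n-r$ is odd, \eqref{nega} gives $E(\boldsymbol{C}_{n-r})=\csc\frac{\pi}{2(n-r)}$, while the even negative cycle $\boldsymbol{C}_r$ contributes $2\csc\frac{\pi}{r}$ when $r\equiv0(\bmod~4)$ and $2\cot\frac{\pi}{r}$ when $r\equiv2(\bmod~4)$. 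Hence Lemma~\ref{strong} yields
\[
E(D_n^s[\boldsymbol{r},\boldsymbol{n-r}])=
\begin{cases}
2\csc\frac{\pi}{r}+\csc\frac{\pi}{2(n-r)} & \text{if } r\equiv0(\bmod~4),\\
2\cot\frac{\pi}{r}+\csc\frac{\pi}{2(n-r)} & \text{if } r\equiv2(\bmod~4).
\end{cases}
\]

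For part $(i)$ the energy equals $f(r)$ with $f(z)=2\csc\frac{\pi}{z}+\csc\frac{\pi}{2(n-z)}$, which by Lemma~\ref{lemn2}$(2)$ is decreasing on $[2,\frac{2n}{3}]$ and increasing on $[\frac{2n}{3},n-2]$. On the decreasing branch (subcase $(a)$) the energy is largest at $r=4$ and the chain runs through consecutive multiples of $4$ down to the last one not exceeding $\frac{2n}{3}$; since $\frac{2n}{3}=2m+\frac{4}{3}$, that last value is $2m-2=\frac{2n-10}{3}$, which lies in the class $0(\bmod~4)$ precisely because $m$ is odd. On the increasing branch (subcases $(b)$ and $(c)$) the energy is largest at the largest admissible $r\le n-2$; because $n-2$ is odd this extreme value is $n-3$ when $n-3\equiv0(\bmod~4)$ and $n-5$ when $n-3\equiv2(\bmod~4)$, while the bottom of the chain is the smallest multiple of $4$ at or above $\frac{2n}{3}$, namely $2m+2=\frac{2n+2}{3}$. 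Substituting these endpoints into the monotonicity of $f$ produces the three displayed chains.

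For part $(ii)$ the energy equals $g(r)$ with $g(z)=2\cot\frac{\pi}{z}+\csc\frac{\pi}{2(n-z)}$, which by Lemma~\ref{lemn3} is increasing on the whole interval $[2,n-2]$. Thus the chain decreases as $r$ runs downward from its maximal admissible value to $r=2$, the latter giving $E(D_n^s[\boldsymbol{2},\boldsymbol{n-2}])$ at the bottom. The maximal $r\equiv2(\bmod~4)$ with $r\le n-2$ is $n-3$ when $n-3\equiv2(\bmod~4)$ and $n-5$ when $n-3\equiv0(\bmod~4)$, which is exactly the split between subcases $(a)$ and $(b)$.

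The one genuinely delicate step is the endpoint bookkeeping: one must check, using simultaneously that $n$ is odd, that $n\equiv2(\bmod~3)$, and the stipulated residue of $n-3$ modulo $4$, that the listed cycle lengths such as $\frac{2n-10}{3}$, $\frac{2n+2}{3}$ and $\frac{n-2}{3}$ are integers of the correct residue class and that no admissible value is skipped at the turning point $\frac{2n}{3}$. Because $\frac{2n}{3}=2m+\frac{4}{3}$ is never an integer here, the decreasing and increasing branches of $f$ in part $(i)$ separate cleanly, so the extra comparison across the turning point that was needed in Lemmas~\ref{l1} and \ref{lem12} does not arise; once the endpoints are identified, all the displayed orderings follow at once from the cited monotonicity lemmas.
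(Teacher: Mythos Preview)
Your proposal is correct and is precisely the argument the paper has in mind: the paper omits the proof, saying only that it is ``similar to the proofs of Section~3.1'', and the intended tools are exactly Lemma~\ref{lemn2}$(2)$ for the $r\equiv0(\bmod~4)$ case and Lemma~\ref{lemn3} for the $r\equiv2(\bmod~4)$ case. Your endpoint bookkeeping (writing $n=3m+2$ with $m$ odd so that $\tfrac{2n-10}{3}=2m-2\equiv0(\bmod~4)$ and $\tfrac{2n+2}{3}=2m+2\equiv0(\bmod~4)$) is the right way to verify the listed chain endpoints, and your observation that $\tfrac{2n}{3}$ is never an integer here, so no cross-branch comparison in the style of Lemmas~\ref{l1} and~\ref{lem12} is required, is also correct.
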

Now we give the extremal energy of bicyclic sidigraphs in the class $\mathcal{D}_n^s$.
\begin{theorem}
	Let $S\in\mathcal{D}_n^s$ be a sidigraph with one cycle of even length and one of odd length.
	\begin{itemize}
		\item[$(i)$] For $n\equiv1(\bmod~2)$, the maximal energy of $S$ is attained if $S\cong D_n^s[2,n-2]$.
		\item[$(iii)$] The minimal energy of $S$ is attained if $S\cong D_n^s[\boldsymbol{2},3]$.
	\end{itemize}
\end{theorem}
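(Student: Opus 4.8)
The plan is to dispose of the easy minimal case first and then concentrate the effort on the maximal case. For part $(iii)$, write $S$ as the vertex-disjoint union of an even cycle $\mathcal{C}_p$, an odd cycle $\mathcal{C}_q$ and some isolated vertices, so that Lemma \ref{strong} gives $E(S)=E(\mathcal{C}_p)+E(\mathcal{C}_q)$. By \eqref{posi}--\eqref{nega} the even-cycle term satisfies $E(\mathcal{C}_p)\ge 0$, with the minimum $E(\boldsymbol{C}_2)=2\cot\frac{\pi}{2}=0$ attained only at $\boldsymbol{C}_2$; and for odd $q\ge 3$ the odd-cycle term $E(\mathcal{C}_q)=\csc\frac{\pi}{2q}$ is increasing in $q$, hence at least $\csc\frac{\pi}{6}=2=E(C_3)$. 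Adding the two bounds yields $E(S)\ge 2=E(D_n^s[\boldsymbol{2},3])$, with equality precisely at $D_n^s[\boldsymbol{2},3]$.

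For part $(i)$ let $n$ be odd and let $S$ consist of an even cycle of length $p$ and an odd cycle of length $q$ with $p+q\le n$. First I would remove all isolated vertices. Since $n$ and $p+q$ are both odd, the number $n-(p+q)$ of isolated vertices is even; if it is positive I lengthen the odd cycle from $q$ to $n-p$ (still odd), which strictly raises the energy because $\csc\frac{\pi}{2q}$ increases with $q$. Hence it suffices to maximise energy over sidigraphs whose even cycle has length $p$ and whose odd cycle has length $n-p$, with $p$ even and $2\le p\le n-3$.

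For such a sidigraph the sign of the odd cycle does not affect the energy, while among the two signs of the even cycle the larger contribution is always $2\csc\frac{\pi}{p}$, since $\csc x>\cot x$ on $(0,\frac{\pi}{2})$ forces $2\csc\frac{\pi}{p}>2\cot\frac{\pi}{p}$ irrespective of $p\bmod 4$. Thus the maximal energy attainable for a given $p$ equals $f(p):=2\csc\frac{\pi}{p}+\csc\frac{\pi}{2(n-p)}$, which is exactly the function treated in part $(2)$ of Lemma \ref{lemn2}: it decreases on $[2,\frac{2n}{3}]$ and increases on $[\frac{2n}{3},n-2]$. Being valley-shaped, $f$ attains its maximum over the admissible range $p\in[2,n-3]$ at one of the endpoints, so the only candidates for the maximiser are $D_n^s[2,n-2]$, of value $f(2)=2+\csc\frac{\pi}{2(n-2)}$, and $D_n^s[n-3,3]$, of value $f(n-3)=2\csc\frac{\pi}{n-3}+2$.

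It remains to show $f(2)>f(n-3)$, and this comparison is the heart of the proof: both values equal $\frac{2n}{\pi}$ plus a bounded correction, so the leading term cancels and the two corrections must be separated. After cancelling the common summand and cross-multiplying, $f(2)>f(n-3)$ becomes $\sin\frac{\pi}{n-3}>2\sin\frac{\pi}{2(n-2)}$. I would bound the left-hand side below by $\frac{\pi}{n-3}-\frac{1}{6}\left(\frac{\pi}{n-3}\right)^{3}$ and the right-hand side above by $\frac{\pi}{n-2}$ using \eqref{e2}; the inequality then reduces to the elementary estimate $6(n-3)^2>\pi^2(n-2)$, which holds for every odd $n\ge 7$. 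Since $2\equiv 2\pmod 4$ makes the optimal even cycle at $p=2$ the positive cycle $C_2$, the maximiser is exactly $S\cong D_n^s[2,n-2]$. The delicate step is precisely this last inequality, where the sharp two-sided sine bounds of \eqref{e2} are needed; everything else follows from the monotonicity recorded in Lemma \ref{lemn2} together with the elementary domination $\csc>\cot$.
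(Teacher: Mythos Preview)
Your proof is correct. For part $(iii)$ your argument coincides with the paper's. For part $(i)$, however, the paper gives no self-contained argument at all: it simply cites Theorem~7 of Khan and Farooq~\cite{KF}. You instead supply a complete proof internal to the paper, reducing to the case $p+q=n$, dominating the even-cycle contribution by $2\csc\frac{\pi}{p}$ irrespective of sign, and then invoking the valley shape of $f(p)=2\csc\frac{\pi}{p}+\csc\frac{\pi}{2(n-p)}$ from Lemma~\ref{lemn2}(2) to reduce the problem to the two endpoints $p=2$ and $p=n-3$. The decisive comparison $\csc\frac{\pi}{2(n-2)}>2\csc\frac{\pi}{n-3}$ is then handled cleanly by the two-sided Taylor bounds~\eqref{e2}, yielding the elementary inequality $6(n-3)^2>\pi^2(n-2)$, valid for all odd $n\ge 7$ (with $n=5$ trivial since $p=2$ is the only admissible even length). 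Your route has the virtue of being entirely self-contained within the paper's own lemmas, whereas the paper outsources the work to an external reference; the cost is the extra endpoint analysis, which you correctly flag as the only delicate step.
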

\begin{proof}
$(i).$ For proof, see Theorem 7 \cite{KF}.\\
$(ii).$ Since for odd integers $r_1$ and $r_2$ with $r_1 \geq r_2 \geq 3$, it holds that $E(C_{r_1}) \geq E(C_{r_2})$ and $E(\boldsymbol{C}_2)=0$. Hence the minimal energy of $S$ is attained if $S\cong D_n^s[\boldsymbol{2},3]$.
\end{proof}
In next theorem, we give the complete energy ordering of those bicyclic sidigraphs in $\mathcal{D}_n^s$ whose one cycle is of even length and one is of odd length.
\begin{theorem}
	Let $n$ is odd with $n>5$ and $r\in[2,n-2]$.
	\begin{itemize}
		\item [$(1)$] If $n\equiv 0(\bmod~3)$ then we have the following energy ordering:
		\begin{itemize}
			\item [$(i)$] Let $r \equiv 2(\bmod~4)$.
			\begin{itemize}
				\item [$(a)$] If $r \in [2,\frac{2n}{3}]$ then
				\begin{eqnarray*}
				&&E(D_n^s[2,n-2])>E(D_n^s[6,n-6]) > \dots > E(D_n^s[\frac{2n}{3},\frac{n}{3}]) \\
				&>& E(D_n^s[\frac{2n-12}{3},\frac{n}{3}])>E(D_n^s[\frac{2n-24}{3},\frac{n}{3}])>\dots > E(D_n^s[2,\frac{n}{3}])\\
				&>& E(D_n^s[2,\frac{n-6}{3}]) > E(D_n^s[2,\frac{n-12}{3}]) >\dots > E(D_n^s[2,3]).
				\end{eqnarray*}
				\item [$(b)$] If $r \in [\frac{2n}{3},n-2]$ and $n-3\equiv 2(\bmod~4)$ then
				\begin{eqnarray*}
				&&E(D_n^s[n-3,3])>E(D_n^s[n-7,7]) > \dots > E(D_n^s[\frac{2n}{3},\frac{n}{3}])\\
				&>& E(D_n^s[\frac{2n}{3},\frac{n-6}{3}]) > E(D_n^s[\frac{2n}{3},\frac{n-12}{3}]) >\dots > E(D_n^s[\frac{2n}{3},3])\\
				&>& E(D_n^s[\frac{2n-12}{3},3]) E(D_n^s[\frac{2n-24}{3},3]) > \dots > E(D_n^s[2,3]).
				\end{eqnarray*}	
				\item [$(c)$] If $r \in [\frac{2n}{3},n-2]$ and $n-3\equiv 0(\bmod~4)$ then
				\begin{eqnarray*}
				&&E(D_n^s[n-5,5])>E(D_n^s[n-9,9]) > \dots > E(D_n^s[\frac{2n}{3},\frac{n}{3}])\\
				&>& E(D_n^s[\frac{2n-12}{3},\frac{n}{3}])>E(D_n^s[\frac{2n-24}{3},\frac{n}{3}])>\dots > E(D_n^s[2,\frac{n}{3}])\\
				&>& E(D_n^s[2,\frac{n-6}{3}]) > E(D_n^s[2,\frac{n-12}{3}]) >\dots > E(D_n^s[2,3]).
				\end{eqnarray*}	
				\item [$(d)$] If $n-3\equiv 2(\bmod~4)$ then
				\begin{eqnarray*}
				&&E(D_n^s[\boldsymbol{n-3},\boldsymbol{3}])>	E(D_n^s[\boldsymbol{n-7},\boldsymbol{7}]) > \dots > 	E(D_n^s[\boldsymbol{2},\boldsymbol{n-2}])\\
				&>& E(D_n^s[\boldsymbol{2},\boldsymbol{n-4}]) > E(D_n^s[\boldsymbol{2},\boldsymbol{n-6}])>\dots >E(D_n^s[\boldsymbol{2},\boldsymbol{3}]) >E(D_n^s[\boldsymbol{2},\boldsymbol{2}]).
				\end{eqnarray*}
				\item [$(e)$] If $n-3\equiv 0(\bmod~4)$ then
				\begin{eqnarray*}
				&&E(D_n^s[\boldsymbol{n-5},\boldsymbol{5}])>	E(D_n^s[\boldsymbol{n-9},\boldsymbol{9}]) > \dots > 	E(D_n^s[\boldsymbol{2},\boldsymbol{n-2}])\\
					&>& E(D_n^s[\boldsymbol{2},\boldsymbol{n-4}]) > E(D_n^s[\boldsymbol{2},\boldsymbol{n-6}])>\dots >E(D_n^s[\boldsymbol{2},\boldsymbol{3}]) >E(D_n^s[\boldsymbol{2},\boldsymbol{2}]).
				\end{eqnarray*}
			\end{itemize}
			\item[$(ii)$] Let $r \equiv 0(\bmod~4)$.
			\begin{itemize}
				\item [$(a)$] If $n-3\equiv 2(\bmod~4)$ then
\begin{eqnarray*}
&&E(D_n^s[n-5,5])>E(D_n^s[n-9,9]) > \dots > E(D_n^s[4,n-4])\\
&>& E(D_n^s[4,n-6]) > E(D_n^s[4,n-8]) > E(D_n^s[4,3]) > E(D_n^s[2,3]).
\end{eqnarray*}
\item [$(b)$] If $n-3\equiv 0(\bmod~4)$ then
\begin{eqnarray*}
&&E(D_n^s[n-3,5])>E(D_n^s[n-7,9]) > \dots > E(D_n^s[4,n-4])\\
&>& E(D_n^s[4,n-6]) > E(D_n^s[4,n-8]) > E(D_n^s[4,3]) > E(D_n^s[2,3]).
\end{eqnarray*}
\item [$(c)$] If $r \in [2,\frac{2n}{3}]$ then
\begin{eqnarray*}
&&E(D_n^s[\boldsymbol{4},\boldsymbol{n-4}])>E(D_n^s[\boldsymbol{8},\boldsymbol{n-8}]) > \dots > E(D_n^s[\boldsymbol{\frac{2n-6}{3}},\boldsymbol{\frac{n+6}{3}}])\\
&>& E(D_n^s[\boldsymbol{\frac{2n-18}{3}},\boldsymbol{\frac{n+6}{3}}])>E(D_n^s[\boldsymbol{\frac{2n-30}{3}},\boldsymbol{\frac{n+6}{3}}]) >\dots > E(D_n^s[\boldsymbol{4},\boldsymbol{\frac{n+6}{3}}])\\
&>& E(D_n^s[\boldsymbol{4},\boldsymbol{\frac{n}{3}}]) > E(D_n^s[\boldsymbol{4},\boldsymbol{\frac{n-6}{3}}]) >\dots > E(D_n^s[\boldsymbol{4},\boldsymbol{3}]) > E(D_n^s[\boldsymbol{2},\boldsymbol{3}]).
\end{eqnarray*}
\item [$(d)$] If $r \in [\frac{2n}{3},n-2]$ and $n-3\equiv 2(\bmod~4)$ then
\begin{eqnarray*}
&&E(D_n^s[\boldsymbol{n-5},\boldsymbol{5}])>E(D_n^s[\boldsymbol{n-9},\boldsymbol{9}]) > \dots > E(D_n^s[\boldsymbol{\frac{2n+6}{3}},\boldsymbol{\frac{n-6}{3}}])\\
				&>& E(D_n^s[\boldsymbol{\frac{2n-6}{3}},\boldsymbol{\frac{n-6}{3}}])> E(D_n^s[\boldsymbol{\frac{2n-18}{3}},\boldsymbol{\frac{n-6}{3}}])>\dots> E(D_n^s[\boldsymbol{4},\boldsymbol{\frac{n-6}{3}}])\\
				&>& E(D_n^s[\boldsymbol{4},\boldsymbol{\frac{n-12}{3}}])> E(D_n^s[\boldsymbol{4},\boldsymbol{\frac{n-18}{3}}])>\dots > E(D_n^s[\boldsymbol{4},\boldsymbol{3}]) > E(D_n^s[\boldsymbol{2},\boldsymbol{3}]) .
				\end{eqnarray*}	
				\item [$(e)$] If $r \in [\frac{2n}{3},n-2]$ and $n-3\equiv 0(\bmod~4)$ then
				\begin{eqnarray*}
				&&E(D_n^s[\boldsymbol{n-3},\boldsymbol{3}])>E(D_n^s[\boldsymbol{n-7},\boldsymbol{7}]) > \dots > E(D_n^s[\boldsymbol{\frac{2n+6}{3}},\boldsymbol{\frac{n-6}{3}}])\\
				&>& E(D_n^s[\boldsymbol{\frac{2n+6}{3}},\boldsymbol{\frac{n-12}{3}}])> E(D_n^s[\boldsymbol{\frac{2n+6}{3}},\boldsymbol{\frac{n-18}{3}}]) > \dots > E(D_n^s[\boldsymbol{\frac{2n+6}{3}},\boldsymbol{3}])\\
				&>& E(D_n^s[\boldsymbol{\frac{2n-6}{3}},\boldsymbol{3}])>E(D_n^s[\boldsymbol{\frac{2n-18}{3}},\boldsymbol{3}])>\dots > E(D_n^s[\boldsymbol{4},\boldsymbol{3}]) > E(D_n^s[\boldsymbol{2},\boldsymbol{3}]).
				\end{eqnarray*}	
			\end{itemize}
		\end{itemize}
	\item [$(2)$] If $n\equiv 1(\bmod~3)$ then we have the following energy ordering:
	\begin{itemize}
		\item [$(i)$] Let $r \equiv 2(\bmod~4)$.
		\begin{itemize}
			\item [$(a)$] If $r \in [2,\frac{2n}{3}]$ then
			\begin{eqnarray*}
			&&E(D_n^s[2,n-2])>E(D_n^s[6,n-6]) > \dots > E(D_n^s[\frac{2n-8}{3},\frac{n+8}{3}])\\
			&>& E(D_n^s[\frac{2n-20}{3},\frac{n+8}{3}]) > E(D_n^s[\frac{2n-32}{3},\frac{n+8}{3}]) >\dots > E(D_n^s[2,\frac{n+8}{3}])\\
			&>& E(D_n^s[2,\frac{n+2}{3}]) > E(D_n^s[2,\frac{n-4}{3}]) >\dots > E(D_n^s[2,3]).
			\end{eqnarray*}
			\item [$(b)$] If $r \in [\frac{2n}{3},n-2]$ and $n-3\equiv 2(\bmod~4)$ then
			\begin{eqnarray*}
			&&E(D_n^s[n-3,3])>E(D_n^s[n-7,7]) > \dots > E(D_n^s[\frac{2n+4}{3},\frac{n-4}{3}])\\
			&>& E(D_n^s[\frac{2n-8}{3},\frac{n-4}{3}]) > E(D_n^s[\frac{2n-20}{3},\frac{n-4}{3}])>\dots> E(D_n^s[2,\frac{n-4}{3}])\\
			&>& E(D_n^s[2,\frac{n-10}{3}])> E(D_n^s[2,\frac{n-16}{3}])>\dots> E(D_n^s[2,3]) .
			\end{eqnarray*}	
			\item [$(c)$] If $r \in [\frac{2n}{3},n-2]$ and $n-3\equiv 0(\bmod~4)$ then
			\begin{eqnarray*}
			&&E(D_n^s[n-5,5])>E(D_n^s[n-9,9]) > \dots > E(D_n^s[\frac{2n+4}{3},\frac{n-4}{3}])\\
			&>& E(D_n^s[\frac{2n+4}{3},\frac{n-10}{3}]) > E(D_n^s[\frac{2n+4}{3},\frac{n-16}{3}])> \dots > E(D_n^s[\frac{2n+4}{3},3])\\
			&>& E(D_n^s[\frac{2n-8}{3},3]) > E(D_n^s[\frac{2n-20}{3},3]) >\dots> E(D_n^s[2,3]) .
			\end{eqnarray*}	
				\item [$(d)$] If $n-3\equiv 2(\bmod~4)$ then
				\begin{eqnarray*}
				&&E(D_n^s[\boldsymbol{n-3},\boldsymbol{3}])>	E(D_n^s[\boldsymbol{n-7},\boldsymbol{7}]) > \dots > 	E(D_n^s[\boldsymbol{2},\boldsymbol{n-2}])\\
				&>&E(D_n^s[\boldsymbol{2},\boldsymbol{n-4}]) > E(D_n^s[\boldsymbol{2},\boldsymbol{n-6}]) >\dots > E(D_n^s[\boldsymbol{2},\boldsymbol{3}]).
				\end{eqnarray*}
				\item [$(e)$] If $n-3\equiv 0(\bmod~4)$ then
				\begin{eqnarray*}
				&&E(D_n^s[\boldsymbol{n-5},\boldsymbol{5}])>	E(D_n^s[\boldsymbol{n-9},\boldsymbol{9}]) > \dots > 	E(D_n^s[\boldsymbol{2},\boldsymbol{n-2}])\\
				&>&E(D_n^s[\boldsymbol{2},\boldsymbol{n-4}]) > E(D_n^s[\boldsymbol{2},\boldsymbol{n-6}]) >\dots > E(D_n^s[\boldsymbol{2},\boldsymbol{3}]).
				\end{eqnarray*}
		\end{itemize}
		\item[$(ii)$] Let $r \equiv 0(\bmod~4)$.
\begin{itemize}
\item [$(a)$] If $n-3\equiv 2(\bmod~4)$ then
\begin{eqnarray*}
&&E(D_n^s[n-5,5])>E(D_n^s[n-9,9]) > \dots > E(D_n^s[4,n-4])\\
&>& E(D_n^s[4,n-6]) > E(D_n^s[4,n-8]) > E(D_n^s[4,3]) > E(D_n^s[2,3]).
\end{eqnarray*}
\item [$(b)$] If $n-3\equiv 0(\bmod~4)$ then
\begin{eqnarray*}
&&E(D_n^s[n-3,5])>E(D_n^s[n-7,9]) > \dots > E(D_n^s[4,n-4])\\
&>& E(D_n^s[4,n-6]) > E(D_n^s[4,n-8]) > E(D_n^s[4,3]) > E(D_n^s[2,3]).
\end{eqnarray*}
\item [$(c)$] If $r \in [2,\frac{2n}{3}]$ then
\begin{eqnarray*}
&&E(D_n^s[\boldsymbol{4},\boldsymbol{n-4}])>E(D_n^s[\boldsymbol{8},\boldsymbol{n-8}]) > \dots > E(D_n^s[\boldsymbol{\frac{2n-2}{3}},\boldsymbol{\frac{n+2}{3}}])\\
&>& E(D_n^s[\boldsymbol{\frac{2n-14}{3}},\boldsymbol{\frac{n+2}{3}}])> E(D_n^s[\boldsymbol{\frac{2n-26}{3}},\boldsymbol{\frac{n+2}{3}}]) > \dots> E(D_n^s[\boldsymbol{4},\boldsymbol{\frac{n+2}{3}}]) \\
&>& E(D_n^s[\boldsymbol{4},\boldsymbol{\frac{n-4}{3}}]) > E(D_n^s[\boldsymbol{4},\boldsymbol{\frac{n-10}{3}}]) >\dots > E(D_n^s[\boldsymbol{4},\boldsymbol{3}]) > E(D_n^s[\boldsymbol{2},\boldsymbol{3}]).
\end{eqnarray*}
\item [$(d)$] If $r \in [\frac{2n}{3},n-2]$ and $n-3\equiv 2(\bmod~4)$ then
\begin{eqnarray*}
&&E(D_n^s[\boldsymbol{n-5},\boldsymbol{5}])>E(D_n^s[\boldsymbol{n-9},\boldsymbol{9}]) > \dots > E(D_n^s[\boldsymbol{\frac{2n+10}{3}},\boldsymbol{\frac{n-10}{3}}])\\
&>&E(D_n^s[\boldsymbol{\frac{2n-2}{3}},\boldsymbol{\frac{n-10}{3}}])> E(D_n^s[\boldsymbol{\frac{2n-14}{3}},\boldsymbol{\frac{n-10}{3}}]) >\dots > E(D_n^s[\boldsymbol{4},\boldsymbol{\frac{n-10}{3}}])\\
&>& E(D_n^s[\boldsymbol{4},\boldsymbol{\frac{n-16}{3}}]) > E(D_n^s[\boldsymbol{4},\boldsymbol{\frac{n-22}{3}}]) >\dots > E(D_n^s[\boldsymbol{4},\boldsymbol{3}]) > E(D_n^s[\boldsymbol{2},\boldsymbol{3}]).
\end{eqnarray*}	
\item [$(e)$] If $r \in [\frac{2n}{3},n-2]$ and $n-3\equiv 0(\bmod~4)$ then
\begin{eqnarray*}
&&E(D_n^s[\boldsymbol{n-3},\boldsymbol{3}])>E(D_n^s[\boldsymbol{n-7},\boldsymbol{7}]) > \dots > E(D_n^s[\boldsymbol{\frac{2n+10}{3}},\boldsymbol{\frac{n-10}{3}}])\\
&>& E(D_n^s[\boldsymbol{\frac{2n+10}{3}},\boldsymbol{\frac{n-16}{3}}]) > E(D_n^s[\boldsymbol{\frac{2n+10}{3}},\boldsymbol{\frac{n-22}{3}}]) > E(D_n^s[\boldsymbol{\frac{2n+10}{3}},\boldsymbol{3}])\\
&>&E(D_n^s[\boldsymbol{\frac{2n-2}{3}},\boldsymbol{3}]) > E(D_n^s[\boldsymbol{\frac{2n-14}{3}},\boldsymbol{3}]) > \dots > E(D_n^s[\boldsymbol{4},\boldsymbol{3}]) > E(D_n^s[\boldsymbol{2},\boldsymbol{3}]).
\end{eqnarray*}	
		\end{itemize}
	\end{itemize}
	\item [$(3)$] If $n\equiv 2(\bmod~3)$ then we have the following energy ordering:
	\begin{itemize}
		\item [$(i)$] Let $r \equiv 2(\bmod~4)$.
\begin{itemize}
	\item [$(a)$] If $r \in [2,\frac{2n}{3}]$ then
\begin{eqnarray*}
&&E(D_n^s[2,n-2])>E(D_n^s[6,n-6]) > \dots > E(D_n^s[\frac{2n-4}{3},\frac{n+4}{3}])\\
&>& E(D_n^s[\frac{2n-16}{3},\frac{n+4}{3}]) > E(D_n^s[\frac{2n-26}{3},\frac{n+4}{3}]) >\dots> E(D_n^s[2,\frac{n+4}{3}])\\
&>& E(D_n^s[2,\frac{n-2}{3}]) > E(D_n^s[2,\frac{n-8}{3}]) > E(D_n^s[2,3]).
\end{eqnarray*}
\item [$(b)$] If $r \in [\frac{2n}{3},n-2]$ and $n-3\equiv 2(\bmod~4)$ then
\begin{eqnarray*}
&&E(D_n^s[n-3,3])>E(D_n^s[n-7,7]) > \dots > E(D_n^s[\frac{2n+8}{3},\frac{n-8}{3}])\\
&>& E(D_n^s[\frac{2n-4}{3},\frac{n-8}{3}]) > E(D_n^s[\frac{2n-16}{3},\frac{n-8}{3}]) > \dots > E(D_n^s[2,\frac{n-8}{3}])\\
&>&E(D_n^s[2,\frac{n-14}{3}]) > E(D_n^s[2,\frac{n-20}{3}]) > \dots > E(D_n^s[2,3]) .
\end{eqnarray*}	
\item [$(c)$] If $r \in [\frac{2n}{3},n-2]$ and $n-3\equiv 0(\bmod~4)$ then
\begin{eqnarray*}
&&E(D_n^s[n-5,5])>E(D_n^s[n-9,9]) > \dots > E(D_n^s[\frac{2n+8}{3},\frac{n-8}{3}])\\
&>& E(D_n^s[\frac{2n+8}{3},\frac{n-14}{3}]) > E(D_n^s[\frac{2n+8}{3},\frac{n-20}{3}]) > \dots > E(D_n^s[\frac{2n+8}{3},3])\\
&>& E(D_n^s[\frac{2n-4}{3},3]) > E(D_n^s[\frac{2n-16}{3},3]) > \dots > E(D_n^s[2,3]).
\end{eqnarray*}
\item [$(d)$] If $n-3\equiv 2(\bmod~4)$ then
\begin{eqnarray*}
&&E(D_n^s[\boldsymbol{n-3},\boldsymbol{3}])>	E(D_n^s[\boldsymbol{n-7},\boldsymbol{7}]) > \dots > 	E(D_n^s[\boldsymbol{2},\boldsymbol{n-2}])\\
&>&E(D_n^s[\boldsymbol{2},\boldsymbol{n-4}]) > E(D_n^s[\boldsymbol{2},\boldsymbol{n-6}]) >\dots > E(D_n^s[\boldsymbol{2},\boldsymbol{3}]).
\end{eqnarray*}
\item [$(e)$] If $n-3\equiv 0(\bmod~4)$ then
\begin{eqnarray*}
&&E(D_n^s[\boldsymbol{n-5},\boldsymbol{5}])>	E(D_n^s[\boldsymbol{n-9},\boldsymbol{9}]) > \dots > 	E(D_n^s[\boldsymbol{2},\boldsymbol{n-2}])\\
&>&E(D_n^s[\boldsymbol{2},\boldsymbol{n-4}]) > E(D_n^s[\boldsymbol{2},\boldsymbol{n-6}]) >\dots > E(D_n^s[\boldsymbol{2},\boldsymbol{3}]).
\end{eqnarray*}	
\end{itemize}
\item[$(ii)$] Let $r \equiv 0(\bmod~4)$.
\begin{itemize}
\item [$(a)$] If $n-3\equiv 2(\bmod~4)$ then
\begin{eqnarray*}
&&E(D_n^s[n-5,5])>E(D_n^s[n-9,9]) > \dots > E(D_n^s[4,n-4])\\
&>& E(D_n^s[4,n-6]) > E(D_n^s[4,n-8]) > E(D_n^s[4,3]) > E(D_n^s[2,3]).
\end{eqnarray*}
\item [$(b)$] If $n-3\equiv 0(\bmod~4)$ then
\begin{eqnarray*}
&&E(D_n^s[n-3,5])>E(D_n^s[n-7,9]) > \dots > E(D_n^s[4,n-4])\\
&>& E(D_n^s[4,n-6]) > E(D_n^s[4,n-8]) > E(D_n^s[4,3]) > E(D_n^s[2,3]).
\end{eqnarray*}
\item [$(c)$] If $r \in [2,\frac{2n}{3}]$ then
\begin{eqnarray*}
&&E(D_n^s[\boldsymbol{4},\boldsymbol{n-4}])>E(D_n^s[\boldsymbol{8},\boldsymbol{n-8}]) > \dots > E(D_n^s[\boldsymbol{\frac{2n-10}{3}},\boldsymbol{\frac{n+10}{3}}])\\
&>& E(D_n^s[\boldsymbol{\frac{2n-22}{3}},\boldsymbol{\frac{n+10}{3}}]) > E(D_n^s[\boldsymbol{\frac{2n-34}{3}},\boldsymbol{\frac{n+10}{3}}]) > \dots > E(D_n^s[\boldsymbol{4},\boldsymbol{\frac{n+10}{3}}])\\
&>&E(D_n^s[\boldsymbol{4},\boldsymbol{\frac{n+4}{3}}]) > E(D_n^s[\boldsymbol{4},\boldsymbol{\frac{n-2}{3}}]) > \dots > E(D_n^s[\boldsymbol{4},\boldsymbol{3}]) > E(D_n^s[\boldsymbol{2},\boldsymbol{3}]).
\end{eqnarray*}
\item [$(d)$] If $r \in [\frac{2n}{3},n-2]$ and $n-3\equiv 2(\bmod~4)$ then
\begin{eqnarray*}
&&E(D_n^s[\boldsymbol{n-5},\boldsymbol{5}])>E(D_n^s[\boldsymbol{n-9},\boldsymbol{9}]) > \dots > E(D_n^s[\boldsymbol{\frac{2n+2}{3}},\boldsymbol{\frac{n-2}{3}}])\\
&>& E(D_n^s[\boldsymbol{\frac{2n-10}{3}},\boldsymbol{\frac{n-2}{3}}]) > E(D_n^s[\boldsymbol{\frac{2n-22}{3}},\boldsymbol{\frac{n-2}{3}}]) > E(D_n^s[\boldsymbol{4},\boldsymbol{\frac{n-2}{3}}])\\
&>& E(D_n^s[\boldsymbol{4},\boldsymbol{\frac{n-8}{3}}]) > E(D_n^s[\boldsymbol{4},\boldsymbol{\frac{n-14}{3}}]) >\dots> E(D_n^s[\boldsymbol{4},\boldsymbol{3}])>  E(D_n^s[\boldsymbol{2},\boldsymbol{3}]).
\end{eqnarray*}	
\item [$(e)$] If $r \in [\frac{2n}{3},n-2]$ and $n-3\equiv 0(\bmod~4)$ then
\begin{eqnarray*}
&&E(D_n^s[\boldsymbol{n-3},\boldsymbol{3}])>E(D_n^s[\boldsymbol{n-7},\boldsymbol{7}]) > \dots > E(D_n^s[\boldsymbol{\frac{2n+2}{3}},\boldsymbol{\frac{n-2}{3}}])\\
&>& E(D_n^s[\boldsymbol{\frac{2n+2}{3}},\boldsymbol{\frac{n-8}{3}}]) > E(D_n^s[\boldsymbol{\frac{2n+2}{3}},\boldsymbol{\frac{n-14}{3}}]) >\dots > E(D_n^s[\boldsymbol{\frac{2n+2}{3}},\boldsymbol{3}])\\
&>& E(D_n^s[\boldsymbol{\frac{2n-10}{3}},\boldsymbol{3}]) > E(D_n^s[\boldsymbol{\frac{2n-22}{3}},\boldsymbol{3}]) >\dots > E(D_n^s[\boldsymbol{4},\boldsymbol{3}]) > E(D_n^s[\boldsymbol{2},\boldsymbol{3}]).
\end{eqnarray*}	
\end{itemize}
	\end{itemize}
	\end{itemize}
\end{theorem}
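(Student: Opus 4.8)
The plan is to build each complete chain out of the partial orderings already established in Lemmas~\ref{lemn311}--\ref{lemn316} and then to splice the pieces together using two elementary facts: the monotonicity of $\csc$ and $\cot$ on $(0,\tfrac{\pi}{2}]$ together with the monotonicity of $\csc\frac{\pi}{2m}$ in the odd length $m$ (which follows from Lemmas~\ref{lem4} and~\ref{lemn1}), and the single identity $\cot\frac{\pi}{4}=\csc\frac{\pi}{2}=1$.

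First I would fix notation. Since $n$ is odd and $r$ is even, the companion length $n-r$ is odd, so by \eqref{posi}--\eqref{nega} the odd cycle contributes $\csc\frac{\pi}{2(n-r)}$ irrespective of its sign; hence $E(D_n^s[r,n-r])=E(D_n^s[r,\boldsymbol{n-r}])$ and it suffices to order the both-positive family and the both-negative family separately, as announced before Lemma~\ref{lemn311}. By Lemma~\ref{strong} and \eqref{posi}--\eqref{nega}, the energy of a member is $g(r)+\csc\frac{\pi}{2(n-r)}$, where $g(r)=2\cot\frac{\pi}{r}$ when $r\equiv0\ (\mathrm{mod}\ 4)$ in the positive case (resp. $r\equiv2\ (\mathrm{mod}\ 4)$ in the negative case) and $g(r)=2\csc\frac{\pi}{r}$ in the complementary parity. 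I would record, for every residue of $n$ modulo $3$ and every parity of $r$ modulo $4$, which of these two expressions applies; these are exactly the expressions whose monotonicity is controlled by Lemmas~\ref{lemn2} and~\ref{lemn3}.

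Next I would assemble the complete chain as a concatenation of three monotone runs sharing endpoints. The first (``diagonal'') run, in which the even and the odd cycle shrink simultaneously, is precisely the content of the relevant part of Lemmas~\ref{lemn311}--\ref{lemn316}; its extreme element sits at the corner $\tfrac{2n}{3}$ (up to the $n\bmod 3$ correction) where $2\csc\frac{\pi}{z}+\csc\frac{\pi}{2(n-z)}$ changes monotonicity by Lemma~\ref{lemn2}(2) (resp. where $2\cot\frac{\pi}{z}+\csc\frac{\pi}{2(n-z)}$ is handled by Lemma~\ref{lemn3}). The second run holds the odd cycle fixed and lets the even cycle shrink; here the odd term is constant and the ordering is just that $\cot\frac{\pi}{r}$ and $\csc\frac{\pi}{r}$ are decreasing in $r$, i.e. $\csc$ and $\cot$ decreasing on $(0,\tfrac{\pi}{2}]$. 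The third run holds the even cycle fixed (at $2$ in the positive case, $\boldsymbol 2$ in the negative case) and lets the odd cycle shrink, which is governed by $\csc\frac{\pi}{2m}$ being increasing in $m$. Because consecutive runs share an endpoint, each concatenation is immediate once the individual runs are in place.

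The one genuinely non-automatic step, and the place I expect the real work, is joining the $r\equiv0\ (\mathrm{mod}\ 4)$ family to the $r\equiv2\ (\mathrm{mod}\ 4)$ family at the foot of the positive chains: this is the junction $E(D_n^s[4,\cdot])=E(D_n^s[2,\cdot])$, which holds because $E(C_4)=2\cot\frac{\pi}{4}=2=2\csc\frac{\pi}{2}=E(C_2)$, exactly the identity $\cot\frac{\pi}{4}=\csc\frac{\pi}{2}$. Verifying these seam comparisons for each of the cases $n\equiv 0,1,2\ (\mathrm{mod}\ 3)$ and each parity condition on $n-3$ modulo $4$ is routine but bookkeeping-heavy: one must check that the last element of each run is at least the first element of the next, after which the stated orderings follow by transitivity. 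I would carry out one representative case in full (say $n\equiv0\ (\mathrm{mod}\ 3)$ with $r\equiv2\ (\mathrm{mod}\ 4)$) and then note that the remaining cases are identical after the obvious index shifts, mirroring the treatment in Lemmas~\ref{lemn311}--\ref{lemn316}.
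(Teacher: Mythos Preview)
Your proposal is correct and matches the paper's approach: the paper gives no explicit proof of this theorem, having noted at the start of Section~3.3 that the arguments are analogous to Section~3.1, and the analogous Section~3.1 theorem is proved in two lines by invoking exactly the ingredients you list --- the identity $\cot\frac{\pi}{4}=\csc\frac{\pi}{2}$ and the monotonicity of $\csc$ and $\cot$ on $(0,\frac{\pi}{2}]$ --- on top of the partial orderings from the preceding lemmas. Your write-up is considerably more detailed than what the paper offers, but the underlying strategy (diagonal run from Lemmas~\ref{lemn311}--\ref{lemn316}, then two single-variable monotone runs, glued at the seams) is the same.
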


\end{document}